\newcommand{\bB}{{\mathbb{B}}}
\newcommand{\bC}{{\mathbb{C}}}
\newcommand{\bF}{{\mathbb{F}}}
\newcommand{\bN}{{\mathbb{N}}}
\newcommand{\bR}{{\mathbb{R}}}
\newcommand{\bS}{{\mathbb{S}}}
  \newcommand{\A}{{\mathcal{A}}}
  \newcommand{\D}{{\mathcal{D}}}
  \newcommand{\E}{{\mathcal{E}}}
  \newcommand{\F}{{\mathcal{F}}}
\renewcommand{\H}{{\mathcal{H}}}
\renewcommand{\L}{{\mathcal{L}}}
  \newcommand{\M}{{\mathcal{M}}}
  \newcommand{\X}{{\mathcal{X}}}
\newcommand{\fA}{{\mathfrak{A}}}
\newcommand{\fF}{{\mathfrak{F}}}
\newcommand{\fK}{{\mathfrak{K}}}
\newcommand{\fL}{{\mathfrak{L}}}
\newcommand{\fr}{{\mathfrak{r}}}
\newcommand{\fs}{{\mathfrak{s}}}
\newcommand{\fT}{{\mathfrak{T}}}
\newcommand{\fW}{{\mathfrak{W}}}
\newcommand{\fz}{{\mathfrak{z}}}
\newcommand{\rA}{\mathrm{A}}
\newcommand{\rC}{\mathrm{C}}
\newcommand{\eps}{\varepsilon}
\renewcommand{\phi}{\varphi}
\newcommand{\upchi}{{\raise.35ex\hbox{$\chi$}}}
\newcommand{\ol}{\overline}
\newcommand{\AB}{{\mathrm{A}(\mathbb{B}_d)}}
\newcommand{\HB}{{H^\infty(\mathbb{B}_d)}}
\newcommand{\qand}{\quad\text{and}\quad}
\newcommand{\spn}{\operatorname{span}}
\newcommand{\Band}{\operatorname{\mathscr{B}}}
\newcommand{\Hen}{\operatorname{Hen}}
\newcommand{\frk}[1]{\mathfrak{#1}}
\newcommand{\Ro}{\mathscr{R}_0}
\newcommand{\AC}{\operatorname{AC}}
\newcommand{\SG}{\operatorname{SG}}
\newtheorem{lemma}{Lemma}[section]
\newtheorem{theorem}[lemma]{Theorem}
\newtheorem{proposition}[lemma]{Proposition}
\newtheorem{corollary}[lemma]{Corollary}
\newtheorem{theoremx}{Theorem}
\theoremstyle{definition}
\newtheorem{definition}[lemma]{Definition}
\newtheorem{question}{Question}
\newtheorem{example}{Example}
\date{\today}
\author{Rapha\"el Clou\^atre}
\address{Department of Mathematics, University of Manitoba, Winnipeg, Manitoba, Canada R3T 2N2}
\email{raphael.clouatre@umanitoba.ca\vspace{-2ex}}
\author{Edward J. Timko}
\email{edward.timko@umanitoba.ca\vspace{-2ex}}
\thanks{R.C. was partially supported by an NSERC Discovery Grant. E.J.T. was partially supported by a PIMS postdoctoral fellowship.}
\title[Non-commutative Henkin theory]{Non-commutative measure theory: Henkin and analytic functionals on $\rC^*$-algebras}
\begin{document}
\begin{abstract}
Henkin functionals on non-commutative $\rC^*$-algebras have recently emerged as a pivotal link between operator theory and complex function theory in several variables. Our aim in this paper is characterize these functionals through a notion of absolute continuity, inspired by a seminal theorem of Cole and Range. To do this, we recast the problem as a question in non-commutative measure theory. We develop a Glicksberg--K\"onig--Seever decomposition of the dual space of a $\rC^*$-algebra into an absolutely continuous part and a singular part, relative to a fixed convex subset of states. Leveraging this tool, we show that Henkin functionals are absolutely continuous with respect to the so-called analytic functionals if and only if a certain compatibility condition is satisfied by the ambient weak-$*$ topology. In contrast with the classical setting, the issue of stability under absolute continuity is not automatic in this non-commutative framework, and we illustrate its key role in sharpening our description of Henkin functionals. Our machinery yields new insight when specialized to the multiplier algebras of the Drury--Arveson space and of the Dirichlet space, and to Popescu's noncommutative disc algebra. As another application, we make a contribution to the theory of non-commutative peak and interpolation sets.
\end{abstract}
\maketitle

\section{Introduction}

This paper is centred around certain distinguished elements of the dual of a $\rC^*$-algebra. Our motivation is the following classical situation.

Let $d\geq 1$ be a positive integer.  Let $\bB_d\subset \bC^d$ be the open unit ball and denote its topological boundary, the unit sphere, by $\bS_d$. The \emph{ball algebra} $\AB$ is the norm closure of the polynomials inside of the commutative $\rC^*$-algebra $\rC(\bS_d)$. We let $\sigma$ denote the unique rotation-invariant, regular, Borel probability measure on $\bS_d$ \cite[Remark, p.16]{rudin2008}. We view $\rC(\bS_d)$ as being embedded inside of the commutative von Neumann algebra $L^\infty(\bS_d,\sigma)$, and we analyze the triple
\begin{equation}\label{Eq:tripleintro}
\AB\subset \rC(\bS_d)\subset L^\infty(\bS_d,\sigma)
\end{equation}
from the point of view of dual spaces. 

First, we recall that the dual space of $\rC(\bS_d)$ can be identified isometrically with $M(\bS_d)$, the space of regular Borel measures on $\bS_d$. 
In this space are some distinguished elements, called \emph{Henkin measures},  that are particularly relevant for our purposes in this paper. Roughly speaking, these are the measures that are compatible with the inclusions \eqref{Eq:tripleintro}. More precisely, a measure $\mu\in M(\bS_d)$ is Henkin if 
\[
\lim_{n\to\infty}\int_{\bS_d}a_n d\mu=0
\]
whenever $(a_n)$ is a sequence in $\rA(\bB_d)$ that converges to $0$ in the weak-$*$ topology of $L^\infty(\bS_d,\sigma)$. 

Henkin measures were introduced in  \cite{henkin1968} as a tool to distinguish the ball algebra from the polydisc algebra (in possibly different dimensions), and are now more or less completely understood, thanks to seminal contributions of Valskii \cite{valskii1971}, Glicksberg and K\"onig--Seever \cite{glicksberg1967},\cite{konig1969}, and Cole--Range \cite{cole1972}. This elegant theory is neatly laid out in \cite[Chapter 9]{rudin2008}, where it culminates in the fact that a measure $\mu\in M(\bS_d)$ is Henkin if and only if it is absolutely continuous with respect to some regular Borel probability measure $\rho$ on $\bS_d$ that represents evaluation at the origin, in the sense that
\[
\int_{\bS_d}ad\rho=a(0), \quad a\in \rA(\bB_d).
\]

In the univariate case, the ball algebra holds the key to the structure of Hilbert space contractions, through von Neumann's inequality \cite{nagy2010}. It was noted, at least as early as \cite{eschmeier1997} and \cite{eschmeier2001}, that Henkin measures in higher dimensions also carry substantial operator theoretic information.  However, this picture is incomplete. A standard paradigm in multivariate operator theory dictates that in order to faithfully model the behaviour of several (commuting) operators, one is inexorably led to algebras of holomorphic functions that cannot be embedded in commutative $\rC^*$-algebras \cite{muller1993},\cite{arveson1998}. An analogue of Henkin measures adapted to this more general framework is thus required. 

Accordingly, the concept of Henkin functionals was explored in \cite{CD2016duality},\cite{CD2016abscont} for the multiplier algebra of the Drury--Arveson space, and subsequently refined and extended to a large family of operator algebras of holomorphic functions \cite{BHM2018},\cite{DH2020}. While important aspects of the aforementioned classical theory of Henkin measures for the ball algebra can be realized in this more general framework, some fundamental pieces remained elusive upon the appearance of \cite{CD2016duality}. Chief amongst these mysteries was the lack of a satisfactory analogue of the Cole--Range description of Henkin measures. Faced with this difficulty, a hopeful conjecture was formulated in \cite{CD2016duality}, proposing that the classical description of Henkin measures was still valid for the Drury--Arveson space. This was disproved soon thereafter in \cite{hartz2018henkin}, and to the best of our knowledge the problem of \emph{completely} characterizing Henkin functionals outside of the ball algebra is yet unresolved, and largely unexplored. The current paper initiates this program, motivated by the aforementioned operator theoretic considerations. Interestingly, Henkin measures in the setting of the Dirichlet space have recently been shown to be intimately connected to potential theory \cite{ChHartz2020}, providing further drive for our investigation.

We approach the problem by recasting it much more generally. Fix a von Neumann algebra $\fW$ and a unital norm-closed subalgebra $\A\subset \fW$. We define $\Hen_\fW(\A) \subset \A^*$ to be the subset of bounded linear functionals $\phi:\A\to \bC$ with the property that
$
\lim_i \phi(a_i)=0
$
whenever $(a_i)$ is a bounded net in $\A$ converging to $0$ in the weak-$*$ topology of $\fW$. 
Our main goal, given a unital $\rC^*$-algebra $\fT$ with $\A\subset \fT \subset \fW$, is then to elucidate the structure of the set 
\[
\{\phi\in \fT^*:\phi|_\A\in \Hen_\fW(\A)\}.
\]
Naturally, elements therein can be viewed as  non-commutative analogues of Henkin measures. For this purpose, we draw inspiration from the classical solution in \cite[Chapter 9]{rudin2008}, and seek to describe the set above by means of what one may call non-commutative measure theory. Although our approach seems essentially disjoint from it, we mention that such a theory was carved out in Pedersen's seminal work \cite{pedersen1966I},\cite{pedersen1966II},\cite{pedersen1966IIIIV}.

We now describe the organization of the paper. 

The preliminary Section \ref{S:AC} introduces a notion of absolute continuity and of singularity for functionals on $\rC^*$-algebras. It should be acknowledged that other non-commutative versions of the classical measure theoretic notions have been investigated previously in various contexts  \cite{sakai1965},\cite{PT1973},\cite{exel1990},\cite{vaes2001},\cite{GK2009}. The precise version that we require for our purposes appears to be different, however. Section \ref{S:AC} collect various technical facts on these topics that are required subsequently. 

Section \ref{S:GKS} is concerned with purely non-commutative measure theoretic questions, and sets the stage for the identification of Henkin functionals to come later. Therein, we prove a non-commutative analogue of the Glicksberg--K\"onig--Seever decomposition \cite[Theorem 9.4.4]{rudin2008}. This is done in two steps. First, we exhibit a Riesz decomposition of the dual of a $\rC^*$-algebra $\fT$ with respect to a norm-closed convex subset $\Delta$ of states on $\fT$. In more details, we show in Theorem \ref{T:ncRiesz} that
\[
\fT^*=\AC(\Delta)\oplus \SG(\Delta)
\]
where $\AC(\Delta)$ is the space of functionals on $\fT$ that are absolutely continuous with respect to some state in $\Delta$, while $\SG(\Delta)$ is the space of functionals on $\fT$ that are singular with respect to every element in $\Delta$. %
Second, using Akemann's non-commutative topology \cite{akemann1969}, we refine the decomposition when the convex subset $\Delta$ is in fact closed in the weak-$*$ topology (Theorem \ref{T:rainwater}). This is in line with Rainwater's contribution to the classical Glicksberg--K\"onig--Seever decomposition \cite{rainwater1969}. 

In Section \ref{S:ncHenkin}, we turn our attention to the main problem driving this paper, namely the identification of Henkin functionals. We recall the typical setup. Let $\fW$ be a von Neumann algebra, let $\fT\subset \fW$ be a unital $\rC^*$-algebra and let $\A\subset \fT$ be a unital norm-closed subalgebra.  Let $\Delta$ denote the norm closure in $\fT^*$ of the convex hull of $\{|\phi|:\phi\in \A^\perp,\|\phi\|=1\}$ (see Section \ref{S:AC} for a discussion on absolute values of functionals). We say that the triple $\A\subset \fT\subset \fW$ is \emph{analytic} if a bounded net $(b_i)$ in $\A$ converges to $0$ in the weak-$*$ topology of $\fW$ whenever 
\[
\lim_i \alpha(b_i)=0, \quad \alpha\in \AC(\Delta).
\]
We remark that, classically, measures on the unit circle that annihilate the disc algebra are said to be analytic, so for this reason we think of $\A^\perp$ as those analytic functionals on $\fT$; this explains our choice of terminology. The property of a triple being analytic is a salient feature of our investigation, and we exhibit concrete sufficient conditions for analyticity of the triple in Propositions \ref{P:anal} and \ref{P:coanal}. 

The following is one of our main results and can be found in Theorem \ref{T:nccr}, where we write
\[
\Band=\{\phi\in \fT^*:\phi|_\A\in \Hen_{\fW}(\A)\}.
\]

\begin{theoremx}\label{T:A}
The triple $\A\subset \fT\subset \fW$ is analytic if and only if $\Band\subset \AC(\Delta).$
\end{theoremx}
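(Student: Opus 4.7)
The plan is to prove the two implications separately. The converse direction ($\Leftarrow$) is immediate from the fact that restrictions of normal functionals on $\fW$ to $\fT$ are automatically Henkin on $\A$, while the direct implication ($\Rightarrow$) is the substantive half and relies on the Riesz-type decomposition of Theorem \ref{T:ncRiesz}.

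For ($\Leftarrow$), assume $\Band\subset\AC(\Delta)$. Any $\omega\in\fW_*$ restricts to an element of $\Band$: indeed $\omega|_\fT\in\fT^*$, and for any bounded net $(a_i)$ in $\A$ with $a_i\to 0$ in the weak-$*$ topology of $\fW$, the definition of that topology yields $\omega(a_i)\to 0$, so $\omega|_\A\in\Hen_\fW(\A)$. Consequently $\fW_*|_\fT\subset\AC(\Delta)$. Given now a bounded net $(b_i)$ in $\A$ with $\alpha(b_i)\to 0$ for every $\alpha\in\AC(\Delta)$, we deduce in particular that $\omega(b_i)\to 0$ for every $\omega\in\fW_*$, which is precisely the statement that $b_i\to 0$ weak-$*$ in $\fW$.

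For ($\Rightarrow$), assume the triple is analytic and let $\phi\in\Band$. Apply Theorem \ref{T:ncRiesz} to decompose $\phi=\phi_a+\phi_s$ with $\phi_a\in\AC(\Delta)$ and $\phi_s\in\SG(\Delta)$; the aim is to show $\phi_s=0$. The strategy is to argue by contradiction: supposing $\phi_s\neq 0$, I would try to produce a bounded net $(b_i)$ in $\A$ satisfying $\alpha(b_i)\to 0$ for every $\alpha\in\AC(\Delta)$ while $\phi_s(b_i)\not\to 0$. By the analytic hypothesis such a net must then converge weak-$*$ to $0$ in $\fW$, whence $\phi(b_i)\to 0$ since $\phi\in\Band$; combined with $\phi_a(b_i)\to 0$, this forces $\phi_s(b_i)=\phi(b_i)-\phi_a(b_i)\to 0$, contradicting the construction of the net.

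The principal obstacle is the production of this separating net inside the (possibly small) subalgebra $\A$. At the level of $\fT^{**}$, the singular part $\phi_s$ is supported away from every $\omega\in\Delta$, and weak-$*$ density (Goldstine) readily supplies bounded approximants in $\fT$ that detect $\phi_s$ while being annihilated in the limit by all of $\AC(\Delta)$; the delicate point is to transfer such a net into $\A$ itself. I expect this will require exploiting the particular structure of $\Delta$, which is built from absolute values of functionals in $\A^\perp$, through a Hahn--Banach separation argument combined with the finer geometric information afforded by Akemann's non-commutative topology, in the spirit of Theorem \ref{T:rainwater}.
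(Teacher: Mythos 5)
Your converse direction is correct and is exactly the paper's argument: normal functionals on $\fW$ restrict to elements of $\Band$, hence to elements of $\AC(\Delta)$ under the hypothesis, and the analyticity condition follows at once. No issues there.

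For the forward direction, your skeleton (decompose $\phi=\phi_a+\phi_s$ via Theorem \ref{T:ncRiesz} and kill $\phi_s$ by producing a bounded net in $\A$ that is annihilated in the limit by all of $\AC(\Delta)$) is the right one, but the proposal stops precisely at the step that carries all the content: you never actually construct the net inside $\A$, and the tools you gesture at --- a Hahn--Banach separation argument and the Akemann-topological refinement of Theorem \ref{T:rainwater} --- are not what is needed and would not obviously produce a net lying in the subalgebra $\A$ rather than merely in $\fT$. The missing idea is the following. Write $\fr=\vee_{\delta\in\Delta}\fs_\delta$, so that $\phi_s=\widehat\phi((I-\fr)\,\cdot\,)$. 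For each fixed $t\in\fT$ one has $\widehat\alpha((I-\fr)t)=0$ for every $\alpha\in\AC(\Delta)$ by Lemma \ref{L:statevanish}; since $\A^\perp\subset\AC(\Delta)$ (each $\psi\in\A^\perp$ satisfies $\psi\ll|\psi|$ and $|\psi|/\|\psi\|\in\Delta$), this shows $(I-\fr)t\in\A^{\perp\perp}$. Because $\A^{\perp\perp}$ is isometrically and weak-$*$ homeomorphically $\A^{**}$, Goldstine's theorem supplies a bounded net $(b_i)$ in $\A$ itself converging weak-$*$ in $\fT^{**}$ to $(I-\fr)t$. This net satisfies $\lim_i\alpha(b_i)=\widehat\alpha((I-\fr)t)=0$ for all $\alpha\in\AC(\Delta)$, so analyticity forces $b_i\to0$ weak-$*$ in $\fW$, and then $\phi\in\Band$ gives $\phi_s(t)=\widehat\phi((I-\fr)t)=\lim_i\phi(b_i)=0$. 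Letting $t$ range over $\fT$ yields $\phi_s=0$ directly, with no need for the contradiction framing. Until you supply the observation that $(I-\fr)\fT\subset\A^{\perp\perp}$ (or an equivalent device for getting the approximating net into $\A$), the proof is incomplete.
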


In fact, Theorem \ref{T:nccr} gives more information. Indeed, due to the potential lack of commutativity of the algebra $\fT$, certain asymmetries are pervasive throughout our work, and hence require us to prove a certain ``conjugate" version of the previous result. We refer the reader to Section \ref{S:ncHenkin} for details.

Now, it is natural to ask whether we should expect the equality $\Band=\AC(\Delta)$ to hold in Theorem \ref{T:A}. Interestingly, this depends on whether $\Band$ is what we call a \emph{band}, namely whether membership in $\Band$ is stable under absolute continuity. Once again, due to the lack of commutativity in our framework, we must consider both a left and a right version of this notion. The following is another one of our main results (see Theorem \ref{T:Henband}).

\begin{theoremx}\label{T:B}
Assume that the triple $\A\subset \fT\subset \fW$ is analytic. Then,  $\Band=\AC(\Delta)$  if and only if $\Band$ is a left band.
\end{theoremx}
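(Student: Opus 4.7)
The plan is to combine Theorem A (which gives $\Band \subseteq \AC(\Delta)$ under analyticity) with an analysis of the generators of $\Delta$, reducing the equality $\Band = \AC(\Delta)$ to a stability property under left absolute continuity.

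For the direction $(\Leftarrow)$, assume $\Band$ is a left band. Only the reverse inclusion $\AC(\Delta) \subseteq \Band$ needs argument. First I would note that $\A^\perp \subseteq \Band$ trivially, since $\phi|_\A = 0$ belongs to $\Hen_\fW(\A)$ for every $\phi \in \A^\perp$. Next, for each unit-norm $\phi \in \A^\perp$, the polar decomposition of $\phi$ in the bidual $\fT^{**}$ should exhibit $|\phi|$ as the image of $\phi$ under left multiplication by a partial isometry, so that $|\phi|$ lies in the left-absolute-continuity class of $\phi$ in the precise sense developed in Section \ref{S:AC}. The left band hypothesis then yields $|\phi| \in \Band$. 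Because $\Band$ is readily seen to be a norm-closed linear subspace of $\fT^*$ from its defining property, passing to convex combinations and norm limits of the elements $|\phi|$ gives $\Delta \subseteq \Band$. Finally, for any $\alpha \in \AC(\Delta)$ there exists $\sigma \in \Delta \subseteq \Band$ with $\alpha$ absolutely continuous with respect to $\sigma$, and one more application of the left band property places $\alpha$ in $\Band$.

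For the direction $(\Rightarrow)$, assume $\Band = \AC(\Delta)$. It then suffices to show that $\AC(\Delta)$ is itself a left band. Given $\psi$ left absolutely continuous with respect to some $\phi \in \AC(\Delta)$, pick $\sigma \in \Delta$ with $\phi$ absolutely continuous with respect to $\sigma$; transitivity of left absolute continuity (which I expect to be established in the foundational material of Section \ref{S:AC}) then implies $\psi$ is absolutely continuous with respect to $\sigma$, whence $\psi \in \AC(\Delta)$.

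The main obstacle is the polar decomposition step. Verifying that $|\phi|$ is left, rather than right, absolutely continuous with respect to $\phi$ requires careful tracking of the one-sided conventions adopted in Section \ref{S:AC}, and in particular of how the partial isometry appearing in the polar decomposition acts on $\fT$ versus on $\fT^*$. A mismatch in side conventions would instead prove the analogous statement for right bands, which is presumably the companion result alluded to by the authors' comments on asymmetry; recovering the left-band version would then require an involutive transport between the two notions.
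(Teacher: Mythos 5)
Your proposal is correct and follows essentially the same route as the paper's proof of Theorem \ref{T:Henband}(i): both directions rest on transitivity of $\ll$ (read off the support projections) together with the chain $\A^\perp\subset\Band$, then $|\phi|\ll\phi$, then norm-closedness of $\Band$ to get $\Delta\subset\Band$, then one more application of the left band property. The side-convention worry you flag resolves in your favour: since $|\phi|=\widehat\phi(\cdot\, v_\phi^*)$, Lemma \ref{L:ACmodule} yields $|\phi|\ll\phi$ in the undaggered sense used to define left bands (indeed $\fs_{|\phi|}=\fs_\phi$), which is exactly what the paper invokes.
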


Although we only state the left version of this theorem here, there is a corresponding right version which depends on the conjugate version of Theorem \ref{T:A} alluded to above.

In Section \ref{S:Ex}, we apply the general machinery developed thus far to three widely studied concrete examples of triples $\A\subset \fT\subset \fW$. We summarize our findings in the following.

\begin{theoremx}\label{T:C}
Let $d\geq 1$ be an integer. 
\begin{enumerate}[{\rm (i)}]
\item The classical triple
$
\AB\subset \rC(\bS_d)\subset L^\infty(\bS_d,\sigma)
$ is analytic and coanalytic. The corresponding set $\Band$ is a left and right band, and $\Band=\AC(\Delta)$. 

\item Let $\H$ be a regular, unitarily invariant, reproducing kernel Hilbert space on $\bB_d$. Let $\A(\H)\subset B(\H)$ denote the norm-closure of the polynomial multipliers and let $\fT(\H)=\rC^*(\A(\H))$. Then, the triple $\A(\H)\subset \fT(\H) \subset B(\H)$ is analytic and coanalytic. The corresponding set $\Band$ is a left and right band, and $\Band=\AC(\Delta)$. 

\item Let $\fA_d$ denote Popescu's non-commutative disc algebra acting on the full Fock space $\fF^2_d$ over $\bC^d$. Let $\fT_d=\rC^*(\fA_d)$. Then, the triple  $\fA_d\subset \fT_d \subset B(\fF^2_d)$ is analytic and coanalytic. The corresponding set $\Band$ is a right band but it is not a left band when $d>1$.
\end{enumerate}
\end{theoremx}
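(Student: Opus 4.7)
Our plan for each of the three triples is uniform. We first establish analyticity and coanalyticity of the triple, so that Theorem~A and its conjugate immediately yield the inclusion $\Band\subset \AC(\Delta)$ together with its right analogue; we then decide whether $\Band$ is a band on each side by invoking Theorem~B and its right version. The standard input for analyticity in each example is a Poisson- or Cauchy-type reconstruction scheme: a uniformly bounded net $(b_i)$ in $\A$ is recovered from a one-parameter family of contractive functionals representing evaluation at a distinguished point, and these functionals are placed inside $\Delta$ up to absolute continuity, so that pointwise vanishing against all of $\AC(\Delta)$ forces weak-$*$ vanishing in $\fW$.

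For part~(i), the Poisson integral on $\bB_d$ supplies the representing probability measures for evaluation at the origin, and the classical Cole--Range theorem \cite[Ch.~9]{rudin2008} identifies Henkin measures with those measures absolutely continuous with respect to some such representing measure. Combined with the classical Glicksberg--K\"onig--Seever decomposition, this simultaneously yields analyticity, coanalyticity, and the equality $\Band=\AC(\Delta)$, while the band property on both sides reduces to the familiar ideal structure of Henkin measures inside $M(\bS_d)$. For part~(ii), the reproducing kernel of $\H$ plays the role of the Poisson kernel: radial dilation $p(z)\mapsto p(rz)$ in the polynomial multiplier algebra, coupled with SOT-convergence as $r\to 1^-$, yields the required approximation, while the representing functionals are supplied by the Cole--Range-type results of \cite{CD2016abscont},\cite{BHM2018},\cite{DH2020}. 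Commutativity of $\Mult(\H)$ makes the left and right notions of absolute continuity coincide, so both band properties hold and Theorem~B then yields $\Band=\AC(\Delta)$.

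For part~(iii), Popescu's noncommutative Poisson transform takes the role of the Poisson kernel: radial dilates of free polynomials converge SOT to the polynomial itself, and the associated family of vector states on $\fT_d$ exhausts $\Delta$ up to absolute continuity, establishing both analyticity and coanalyticity of the triple. Since this kernel intertwines with right multiplication by the creation operators $L_1,\ldots,L_d$, right absolute continuity is preserved by membership in $\Band$, and therefore $\Band$ is a right band. The main obstacle, and the new feature of this paper, is to establish the failure of the left band property when $d>1$. By the left version of Theorem~B this is equivalent to showing $\Band\subsetneq \AC(\Delta)$, which we plan to witness by exhibiting a functional in $\AC(\Delta)$ whose restriction to $\fA_d$ fails to be weak-$*$ continuous on bounded subsets of $\fA_d$. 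The construction must exploit the genuine noncommutativity of $L_1,\ldots,L_d$ for $d\geq 2$; we anticipate producing such a witness through a left-twist of a representing vector state by a creation operator, an operation whose effect is trivial in the univariate case $d=1$. This step, which has no classical analogue, is the most delicate part of the proof.
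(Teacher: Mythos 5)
Your overall skeleton (analyticity and coanalyticity first, then the band properties, then Theorems A and B) matches the paper, but three of your key steps have genuine gaps. First, in parts (i) and (ii) you never actually establish the band property; you assert it. For (ii) your justification --- that commutativity of $\Mult(\H)$ makes left and right absolute continuity coincide --- is wrong on two counts: the relevant algebra is $\fT(\H)=\rC^*(\A(\H))$, which contains the compact operators and is far from commutative (the paper's Example \ref{E:ACmodule} shows absolute continuity is not adjoint-stable already on $B(\bC^2)$), and even if the two notions did coincide this would not show that $\Band$ is a band at all. The paper's proof of Theorem \ref{T:HenAH} splits a functional along the central projection $\fz$ of Lemma \ref{L:fz} into a weak-$*$ continuous part (automatically in $\Band$) and a part factoring through the commutative quotient $\rC(\bS_d)$, and then invokes the measure-theoretic band property of Henkin measures from \cite{BHM2018}; some such reduction is indispensable. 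Second, in part (i) your route through representing measures for evaluation at the origin and the classical Cole--Range theorem proves statements about $\AC(\Ro)$, not about $\AC(\Delta)$, where $\Delta$ is built from absolute values of \emph{annihilating} measures. The paper explicitly remarks that the identity $\AC(\Delta)=\AC(\Ro)$ is only known to it as a corollary of the main theorems, with no direct elementary proof; to get analyticity one must place the Cauchy-kernel functionals inside $\AC(\Delta)$ directly, which Lemma \ref{L:ABanal} does by averaging the absolute values of the annihilating measures $z_k\,d\sigma$ to produce a measure mutually absolutely continuous with $\sigma$.

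Third, and most seriously, in part (iii) the one genuinely new assertion --- that $\Band$ fails to be a left band for $d>1$ --- is exactly the step you defer (``we anticipate producing such a witness\dots''). The paper's argument is short once the right input is available: by \cite{CMT2021} there is $\tau\in\fA_d^\perp$ with $\tau^\dagger\notin\Band$, while the structure theory of annihilating functionals (Lemma \ref{L:riesz}) shows $|\tau^\dagger|\in\Band$; since $\tau^\dagger\ll|\tau^\dagger|$, the left band property fails. Without this, or an equivalent construction, the negative half of (iii) is unproved. Your right-band claim for $\fA_d$ also needs more than the stated intertwining heuristic: the paper uses $\Hen_{B(\fF^2_d)}(\fA_d)=\E_{B(\fF^2_d)}(\fA_d)$ together with Lemma \ref{L:riesz} (for $\phi\in\fA_d^\perp$ and $s\in\fT_d$ one has $\phi(s\,\cdot)\in\Band$) and Theorem \ref{T:RN}.
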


We emphasize that statement (ii) above covers a large family of important  spaces from complex function theory and harmonic analysis, including for instance the Drury--Arveson space in any finite dimension and the Dirichlet space on the disc. In particular, to the best of our knowledge, the previous result offers the first complete description of Henkin functionals for these spaces. This is particularly interesting in view of the work done in \cite{CD2016duality} and \cite{hartz2018henkin}, as described earlier.

At this point, the reader may have noted that Theorems \ref{T:A} and \ref{T:B} constitute somewhat of a departure from the classical Cole--Range theorem for the ball algebra. This discrepancy is addressed at length in Section \ref{S:Ex}.  At least for the ball algebra and for the non-commutative disc algebra, we establish variants of Theorems \ref{T:A} and \ref{T:B} that are closer  to the classical Cole--Range theorem. Roughly speaking, we verify that the set $\Band$ can also be realized as comprising the functionals on $\fT$ that are absolutely continuous with respect to a state  representing ``evaluation at the origin" on $\A$ (see Theorems \ref{T:crAB} and \ref{T:crAd}). However, at the time of this writing we do not know whether the corresponding statement is valid for $\A(\H)$, and are thus left with the following outstanding question.

\begin{question}\label{Q:crAH}
Let $\H$ be a regular, unitarily invariant, reproducing kernel Hilbert space on $\bB_d$. Let $\Ro$ denote the set of states $\rho$ on $\fT(\H)$ with the property that
\[
\rho(a)=a(0), \quad a\in \A(\H).
\]
Is it true that $\Band=\AC(\Ro)$?
\end{question}

Finally, in Section \ref{S:nullproj} we give an application of Theorem \ref{T:A} to the theory of non-commutative peak sets  \cite{hay2007},\cite{BHN2008},\cite{BR2011},\cite{blecher2013},\cite{BR2013}. We show in Theorem \ref{T:peakequiv} that given an analytic triple $\A\subset \fT\subset \fW$, a \emph{null} projection $q\in \fT^{**}$ which is closed in the sense of Akemann must necessarily be \emph{totally null}. Furthermore, we clarify the relationships between these conditions and the properties that $q$ be a non-commutative peak set, a non-commutative interpolation set, or a non-commutative vanishing locus. We also show that equivalences between these notions that hold in the classical case happen to fail more generally.


\section{Preliminaries on absolute continuity}\label{S:AC}
\subsection{Polar decomposition of functionals}\label{SS:polar}
Let $\fT$ be a $\rC^*$-algebra. Recall that the bidual $\fT^{**}$ can be given the structure of a von Neumann algebra that contains $\fT$ as a weak-$*$ dense $\rC^*$-subalgebra \cite[Theorem A.5.6]{BLM2004}.

%
Given a bounded linear functional $\phi:\fT\to \bC$, throughout the paper we let $\widehat{\phi}:\fT^{**}\to \bC$ denote its unique weak-$*$ continuous extension. We recall the details pertaining to the polar decomposition of $\phi$. By \cite[Theorem 1.14.4]{sakai1971}, there is a unique partial isometry $v_\phi\in \fT^{**}$ and a unique weak-$*$ continuous positive linear functional $\omega_\phi:\fT^{**}\to \bC$ with the property that
\[
\widehat{\phi}(\xi)=\omega_\phi(\xi v_\phi), \quad \xi\in \fT^{**}
\]
and
\[
\fT^{**}(I-v_\phi^*v_\phi)=\{\xi\in \fT^{**}: \omega_\phi(\xi^*\xi)=0\}.
\]
Henceforth, we use the following notations: $|\phi|=\omega_\phi$ and $\fs_\phi=v_\phi^*v_\phi$.
Note then that for $\xi\in \fT^{**}$ we have
\begin{align*}
|\phi|(\xi)&=|\phi|(\xi \fs_{\phi})=|\phi|(\xi v_\phi^*v_\phi)=\widehat{\phi}(\xi v_\phi^*).\\
\end{align*}
Summarizing, we have
\begin{equation}\label{Eq:absval}
\widehat\phi=|\phi|(\cdot v_\phi) \qand |\phi|=\widehat\phi(\cdot v^*_\phi).
\end{equation}
These formulas will occur implicitly in the sequel. The following standard uniqueness statement will be used frequently, so we record if for ease of reference.

\begin{lemma}\label{L:takunique}
Let $\fT$ be a $\rC^*$-algebra and let $\phi:\fT\to \bC$ be a bounded linear functional. Let $\omega:\fT^{**}\to \bC$ be a positive linear functional with $\|\omega\|=\|\phi\|$ and such that
\[
|\widehat\phi(\xi)|^2\leq \|\phi\| \omega(\xi\xi^*), \quad \xi\in \fT^{**}.
\]
Then, we have $\omega=|\phi|$.
\end{lemma}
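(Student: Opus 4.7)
The plan is to run a GNS construction for $\omega$ and then use the equality cases of a sharp Cauchy--Schwarz estimate, tested at the polar isometry $v_\phi$, to identify $\omega$ with $|\phi|$. As a preliminary step, I would normalize $\|\phi\|=\|\omega\|=1$ (the case $\phi=0$ being trivial) and test the hypothesis at the single element $\xi=v_\phi^*$. Using \eqref{Eq:absval} together with the fact that $|\phi|(\fs_\phi)=\|\phi\|$ (which is encoded in the support description of $|\phi|$), one has $\widehat\phi(v_\phi^*)=|\phi|(v_\phi^*v_\phi)=|\phi|(\fs_\phi)=1$; the hypothesis then forces $1\leq \omega(\fs_\phi)$, and since $\fs_\phi\leq I$ and $\omega(I)=1$, this yields $\omega(\fs_\phi)=1$. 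A standard Cauchy--Schwarz estimate on $\omega(\eta(I-\fs_\phi))$ then gives $\omega(\eta)=\omega(\eta\fs_\phi)$ for every $\eta\in\fT^{**}$, and the analogous identity for $|\phi|$ is already encoded in \eqref{Eq:absval}.

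Next, I would form the GNS triple $(\pi,\H,\Omega)$ associated to $\omega$ viewed as a state on the $\rC^*$-algebra $\fT^{**}$, so that $\omega(\xi)=\langle \pi(\xi)\Omega,\Omega\rangle$ with $\|\Omega\|=1$. Rewriting the hypothesis as $|\widehat\phi(\xi)|\leq \|\pi(\xi^*)\Omega\|$, the assignment $\pi(\xi^*)\Omega\mapsto \overline{\widehat\phi(\xi)}$ is a well-defined contraction on the dense subspace $\pi(\fT^{**})\Omega\subset\H$, and by Riesz representation there is $\zeta\in\H$ with $\|\zeta\|\leq 1$ satisfying $\widehat\phi(\xi)=\langle \pi(\xi)\zeta,\Omega\rangle$ for every $\xi\in\fT^{**}$.

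The crux is to show that $\zeta=\pi(v_\phi)\Omega$. Specializing to $\xi=v_\phi^*$ yields $\langle \pi(v_\phi^*)\zeta,\Omega\rangle=\widehat\phi(v_\phi^*)=1$, and the chain
\[
1=|\langle \pi(v_\phi^*)\zeta,\Omega\rangle|\leq \|\pi(v_\phi^*)\zeta\|\,\|\Omega\|\leq \|\zeta\|\,\|\Omega\|\leq 1
\]
must collapse to equalities throughout, forcing both $\|\zeta\|=1$ and $\pi(v_\phi^*)\zeta=\Omega$. Since $v_\phi v_\phi^*$ is a projection, so is $\pi(v_\phi v_\phi^*)$, and
\[
\|\pi(v_\phi v_\phi^*)\zeta\|^2=\langle \pi(v_\phi v_\phi^*)\zeta,\zeta\rangle=\|\pi(v_\phi^*)\zeta\|^2=1=\|\zeta\|^2,
\]
which yields $\pi(v_\phi v_\phi^*)\zeta=\zeta$, and hence $\zeta=\pi(v_\phi)\pi(v_\phi^*)\zeta=\pi(v_\phi)\Omega$.

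The lemma then falls out in two lines: for every $\xi\in\fT^{**}$ we have $\widehat\phi(\xi)=\langle\pi(\xi v_\phi)\Omega,\Omega\rangle=\omega(\xi v_\phi)$, and combining with \eqref{Eq:absval} yields $\omega(\xi v_\phi)=|\phi|(\xi v_\phi)$ for all $\xi$. Substituting $\xi=\eta v_\phi^*$ gives $\omega(\eta\fs_\phi)=|\phi|(\eta\fs_\phi)$, and the common support of $\omega$ and $|\phi|$ on $\fs_\phi$ upgrades this to $\omega(\eta)=|\phi|(\eta)$ for every $\eta$. I expect the delicate step to be the equality-case analysis that pins down $\zeta$, since the entire argument hinges on this single collapse of Cauchy--Schwarz inequalities; the remaining manipulations are routine bookkeeping with the polar decomposition formulas \eqref{Eq:absval}.
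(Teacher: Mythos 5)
Your argument is correct, but it is worth noting that the paper does not prove this lemma at all: it simply cites Takesaki, Proposition III.4.6. What you have written is essentially a self-contained reconstruction of that textbook proof. Your route --- normalize, test the hypothesis at $\xi=v_\phi^*$ to force $\omega(\fs_\phi)=\|\phi\|$ and hence $\omega=\omega(\cdot\,\fs_\phi)$, run the GNS construction for $\omega$ on the unital $\rC^*$-algebra $\fT^{**}$, represent $\widehat\phi$ by a vector $\zeta$ via Riesz, and then use the collapse of the Cauchy--Schwarz chain at $v_\phi^*$ together with the projection identity for $\pi(v_\phi v_\phi^*)$ to pin down $\zeta=\pi(v_\phi)\Omega$ --- is sound at every step. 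In particular, the well-definedness of the densely defined functional $\pi(\xi^*)\Omega\mapsto\overline{\widehat\phi(\xi)}$ follows from the hypothesis exactly as you say, $\omega(I)=\|\omega\|$ holds because $\fT^{**}$ is a von Neumann algebra and hence unital, and the final bookkeeping $\omega(\eta)=\omega(\eta\fs_\phi)=|\phi|(\eta\fs_\phi)=|\phi|(\eta)$ closes the argument using the support identities you established for both functionals. The only thing the citation buys over your version is brevity; your version buys self-containedness and makes visible exactly where the equality case of Cauchy--Schwarz is used, which is indeed the crux. One cosmetic remark: the fact $\||\phi|\|=\|\phi\|$, which you invoke to get $\widehat\phi(v_\phi^*)=1$, is part of the uniqueness clause of the polar decomposition as stated in Sakai (and is also derivable in one line from \eqref{Eq:absval}), so no gap there.
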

\begin{proof}
This is \cite[Proposition III.4.6]{takesaki2002}.
\end{proof}

Next, we relate the notion of absolute value to the classical measure theoretic one; the following is well known, but we provide the details for the convenience of the reader.

\begin{lemma}\label{L:absvalmeasure}
Let $X$ be a compact Hausdorff space. Let $\mu$ be a regular Borel measures on $X$ and define a bounded linear functional $\phi_\mu:\rC(X)\to \bC$ by
\[
\phi_\mu(f)=\int_X fd\mu, \quad f\in \rC(X).
\]
Then, 
\[
|\phi_\mu|(f)=\int_X fd|\mu|, \quad f\in \rC(X).
\]
\end{lemma}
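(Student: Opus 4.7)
The plan is to invoke the uniqueness statement in Lemma~\ref{L:takunique}. First I recall the classical polar decomposition of $\mu$: there is a Borel function $h\colon X \to \bC$ with $|h| = 1$ almost everywhere with respect to $|\mu|$ and $d\mu = h\, d|\mu|$. Let $\omega$ denote the unique weak-$*$ continuous extension of the positive functional $\phi_{|\mu|}$ to $\rC(X)^{**}$. Then $\omega$ is positive (positives in $\rC(X)^{**}$ are weak-$*$ limits of positives in $\rC(X)$ by Kaplansky density) and
\[
\|\omega\| = \|\phi_{|\mu|}\| = |\mu|(X) = \|\mu\| = \|\phi_\mu\|.
\]
To conclude $\omega = |\phi_\mu|$ via Lemma~\ref{L:takunique} it remains only to verify the Cauchy--Schwarz type inequality $|\widehat{\phi_\mu}(\xi)|^2 \leq \|\phi_\mu\|\,\omega(\xi\xi^*)$ for every $\xi \in \rC(X)^{**}$.

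For this inequality I would factor both $\widehat{\phi_\mu}$ and $\omega$ through the commutative von Neumann algebra $L^\infty(X, |\mu|)$. The inclusion $\rC(X) \hra L^\infty(X, |\mu|)$ is a $*$-homomorphism into a von Neumann algebra, so by the universal property of the bidual it extends uniquely to a normal $*$-homomorphism $\pi\colon \rC(X)^{**} \to L^\infty(X,|\mu|)$. The two scalar-valued maps $\xi \mapsto \int \pi(\xi)\, d|\mu|$ and $\xi \mapsto \int \pi(\xi)\, h\, d|\mu|$ on $\rC(X)^{**}$ are then weak-$*$ continuous, since each is the pairing of $\pi(\xi) \in L^\infty(|\mu|)$ with an element of the predual $L^1(|\mu|)$, and they agree with $\phi_{|\mu|}$ and $\phi_\mu$ respectively on the weak-$*$ dense subalgebra $\rC(X)$. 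By uniqueness of weak-$*$ continuous extension they therefore coincide with $\omega$ and $\widehat{\phi_\mu}$ on all of $\rC(X)^{**}$. Since $\pi$ takes values in a commutative $*$-algebra, $\pi(\xi\xi^*) = \pi(\xi)\,\overline{\pi(\xi)} = |\pi(\xi)|^2$, and the scalar Cauchy--Schwarz inequality in $L^2(X,|\mu|)$ gives
\[
|\widehat{\phi_\mu}(\xi)|^2 = \Bigl|\int \pi(\xi)\, h\, d|\mu|\Bigr|^2 \leq \Bigl(\int |\pi(\xi)|^2 d|\mu|\Bigr)\Bigl(\int |h|^2 d|\mu|\Bigr) = \omega(\xi\xi^*)\cdot \|\phi_\mu\|,
\]
using $|h|^2 = 1$ $|\mu|$-a.e.\ in the last step.

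With the inequality established on all of $\rC(X)^{**}$, Lemma~\ref{L:takunique} yields $\omega = |\phi_\mu|$, and restricting back to $\rC(X)$ produces the formula in the lemma. The only genuine subtlety is the production of the normal $*$-homomorphism $\pi$ together with the identifications of $\omega$ and $\widehat{\phi_\mu}$ as predual-pairings with $1$ and $h$ in $L^1(|\mu|)$; once this bit of bidual machinery is in place, the rest is a direct appeal to classical Cauchy--Schwarz.
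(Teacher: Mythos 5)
Your proof is correct, and it follows the same overall strategy as the paper: both arguments reduce the lemma to the uniqueness statement of Lemma~\ref{L:takunique} by producing the candidate positive functional $\omega = \widehat{\phi_{|\mu|}}$ and verifying the Schwarz-type inequality, with the unimodular Radon--Nikodym derivative of $\mu$ against $|\mu|$ doing the real work. The difference lies in how that inequality is established. The paper stays inside $\rC(X)^{**}$: it defines a unitary $v \in \rC(X)^{**}$ by $v(\theta) = \int_X \gamma\, d\tau_\theta$ (where $\tau_\theta$ represents $\theta \in \rC(X)^*$ and $d|\mu| = \gamma\, d\mu$), checks that $\widehat{\phi_\mu} = \widehat{\omega}(\cdot\, v^*)$, and applies the abstract Schwarz inequality for the positive functional $\widehat{\omega}$. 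You instead push the whole bidual forward into the concrete von Neumann algebra $L^\infty(X,|\mu|)$ via the normal extension $\pi$ of the inclusion $\rC(X) \to L^\infty(X,|\mu|)$, identify $\omega$ and $\widehat{\phi_\mu}$ as pairings of $\pi(\xi)$ with $1$ and $h$ in $L^1(X,|\mu|)$, and finish with scalar Cauchy--Schwarz. Your route trades the paper's ``readily verified'' assertions about $v$ (that it is unitary and that the factorization $\widehat{\phi_\mu} = \widehat{\omega}(\cdot\, v^*)$ persists on all of the bidual, which requires knowing how Arens multiplication interacts with the definition of $v$) for the standard fact that a $*$-homomorphism into a von Neumann algebra extends normally to the bidual; this arguably makes the weak-$*$ continuity bookkeeping more transparent, while the paper's version is more self-contained within the bidual formalism it has already set up.
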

\begin{proof}
Define $\omega:\rC(X)\to \bC$ as 
\[
\omega(f)=\int_X fd|\mu|, \quad f\in \rC(X).
\]
An easy verification based on Kaplansky's density theorem reveals that $\widehat\omega$ is a positive linear functional on $\rC(X)^{**}$.
By the Radon--Nikodym theorem, there is a measurable function $\gamma:X\to\bC$ such that $|\gamma(x)|=1$ for every $x\in X$ and with the property that $d|\mu|=\gamma d\mu.$ Given $\theta\in \rC(X)^*$, there is a regular Borel measure $\tau_\theta$ such that 
\[
\theta(f)=\int_X fd\tau_\theta, \quad f\in \rC(X).
\]
Define
 $v\in \rC(X)^{**}$ as
\[
v(\theta)=\int_X \gamma d\tau_\theta, \quad \theta\in \rC(X)^*.
\]
It is readily verified that $v$ is unitary and that $\phi=\widehat\omega(\cdot v^*)$. In particular, this implies that $\|\omega\|=\|\phi\|$ and
\begin{align*}
|\widehat\phi(\xi)|^2=|\widehat\omega(\xi v^*)|^2\leq \|\phi\| \omega(\xi \xi^*)
\end{align*}
by the Schwarz inequality. Finally, Lemma \ref{L:takunique} implies $\omega=|\phi|$.
\end{proof}

The absolute value behaves well under changes of representations, as we record next.

\begin{lemma}\label{L:absvaluecomp}
Let $\fT$ be a $\rC^*$-algebra, let $\pi:\fT\to B(\H)$ be a $*$-representation and let $\phi:\pi(\fT)\to \bC$ be a bounded linear functional. Then, 
$
|\phi\circ \pi|=|\phi|\circ \pi^{**}.
$
\end{lemma}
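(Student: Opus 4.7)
The strategy is to invoke the uniqueness criterion of Lemma \ref{L:takunique}, applied to the functional $\phi\circ\pi$ on $\fT$, with candidate absolute value $\omega:=|\phi|\circ\pi^{**}$. Here I view $\pi$ as a surjective $*$-homomorphism $\fT\to\pi(\fT)$, so that $\pi^{**}:\fT^{**}\to\pi(\fT)^{**}$ is a normal $*$-homomorphism of von Neumann algebras; because $\pi^*:\pi(\fT)^*\to\fT^*$ is injective (as $\pi$ is surjective), the double dual $\pi^{**}$ is surjective. This surjectivity will be the key ingredient.

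The first step is to identify $\widehat{\phi\circ\pi}=\widehat\phi\circ\pi^{**}$: the right-hand side is weak-$*$ continuous (composition of weak-$*$ continuous maps) and it extends $\phi\circ\pi$, so the identification follows from uniqueness of the weak-$*$ continuous extension. Next I would verify the hypotheses of Lemma \ref{L:takunique}. Positivity of $\omega$ is immediate, since $\pi^{**}$ is a $*$-homomorphism and $|\phi|$ is positive. For the Schwarz-type inequality, the polar decomposition gives $\widehat\phi(\xi)=|\phi|(\xi v_\phi)$, so Cauchy--Schwarz together with $|\phi|(\fs_\phi)\leq\|\phi\|$ yields $|\widehat\phi(\xi)|^2\leq\|\phi\|\,|\phi|(\xi\xi^*)$ for all $\xi\in\pi(\fT)^{**}$. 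Plugging $\xi=\pi^{**}(\eta)$ and using the multiplicativity of $\pi^{**}$, I obtain
\[
|\widehat{\phi\circ\pi}(\eta)|^2\leq \|\phi\|\,|\phi|(\pi^{**}(\eta\eta^*))=\|\phi\|\,\omega(\eta\eta^*),
\]
which is what is needed once I know $\|\phi\|=\|\phi\circ\pi\|$.

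The main obstacle, and the place where surjectivity is used twice, is the norm equality $\|\omega\|=\|\phi\circ\pi\|$. Since $\pi:\fT\to\pi(\fT)$ is a surjective $*$-homomorphism, it factors through $\fT/\ker\pi$ as an isometric $*$-isomorphism, so it maps the closed unit ball of $\fT$ onto that of $\pi(\fT)$; this gives $\|\phi\circ\pi\|=\|\phi\|$. Applying exactly the same principle to the surjective normal $*$-homomorphism $\pi^{**}$ shows that $\pi^{**}$ maps the closed unit ball of $\fT^{**}$ onto that of $\pi(\fT)^{**}$, which in turn forces $\|\omega\|=\||\phi|\|=\|\phi\|$. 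Combining everything, Lemma \ref{L:takunique} delivers $\omega=|\phi\circ\pi|$, completing the proof.
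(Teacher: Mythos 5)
Your proof is correct and follows essentially the same route as the paper: both take $\omega=|\phi|\circ\pi^{**}$ as the candidate and verify the hypotheses of Lemma \ref{L:takunique}, the only cosmetic difference being that you apply the Schwarz inequality in $\pi(\fT)^{**}$ and pull back along $\pi^{**}$, whereas the paper lifts the partial isometry $v_\phi$ to $\fT^{**}\fz$ and applies Schwarz downstairs. Your explicit justification of the norm equalities via surjectivity of $\pi$ and $\pi^{**}$ fills in details the paper leaves implicit.
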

\begin{proof}
Consider the weak-$*$ continuous $*$-homomorphism $\pi^{**}:\fT^{**}\to \pi(\fT)^{**}$. There exists a central projection $\fz\in \fT^{**}$ with the property that $\ker \pi^{**}=\fT^{**}(I-\fz)$. Then, the map 
\[
 \xi\fz\mapsto \pi^{**}(\xi), \quad \xi \in \fT^{**}
\]
implements a weak-$*$ continuous $*$-isomorphism between $ \fT^{**}\fz$ and $\pi(\fT)^{**}$.
Let $v\in \pi(\fT)^{**}$ be a partial isometry with the property that $\widehat\phi=|\phi|(\cdot v)$. Correspondingly, there is a partial isometry $u\in \fT^{**}\fz$ such that $\pi^{**}(u)=v$. 
Put $\omega=|\phi|\circ \pi^{**}$, which is a weak-$*$ continuous positive linear functional on $\fT^{**}$ with norm equal to $\|\phi\|$. Given $\xi\in \fT^{**}$, we find by the Schwartz inequality that
\begin{align*}
|(\widehat \phi\circ \pi^{**})(\xi)|^2&=| (|\phi| \circ \pi^{**})(\xi u)|^2\\
&=|\omega(\xi u)|^2\leq \omega(u^*u)\omega(\xi\xi^*)\\
&\leq \|\omega\| \omega(\xi\xi^*)=\|\phi\| \omega(\xi\xi^*)\\
&=\|\phi\circ \pi\|\omega(\xi\xi^*).
\end{align*}
Finally, Lemma \ref{L:takunique} implies that $|\phi\circ \pi|=\omega.$
%
%
%
%
%
\end{proof}

\subsection{Absolute continuity}\label{SS:AC}
We can now give our definition of absolute continuity. Let $\phi:\fT\to \bC$ and $\psi:\fT\to \bC$ be bounded linear functionals. We say that $\phi$ is \emph{absolutely continuous with respect to} $\psi$ and write $\phi\ll\psi$ if, given $\xi\in \fT^{**}$, we have that
\[
|\psi|(\xi^*\xi)=0 \quad \text{implies} \quad |\phi|(\xi^*\xi)=0.
\]
Equivalently, we see that $\phi\ll\psi$ if and only if $\fs_\phi\leq \fs_\psi$. 
We say that $\phi$ and $\psi$ are \emph{mutually singular} if $\fs_\phi\fs_\psi=0$.

The following fact is elementary but will be used repeatedly throughout, so we record it for ease of reference.

\begin{lemma}\label{L:statevanish}
Let $\fT$ be a $\rC^*$-algebra. Let  $\phi:\fT\to \bC$ and $\psi:\fT\to \bC$ be bounded linear functionals. Assume that $\phi$ is absolutely continuous with respect to $\psi$.  Let $\xi\in \fT^{**}$ such that $|\psi|(\xi^*\xi)=0$. Then, $\widehat\phi( \xi^* \eta)=0$ for every $\eta\in \fT^{**}$.
\end{lemma}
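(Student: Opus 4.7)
The plan is to translate the hypothesis into the support-projection formalism already set up in the paper, and then to finish with the Cauchy--Schwarz inequality for the positive functional $|\phi|$.

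First, I would rephrase the vanishing condition $|\psi|(\xi^*\xi)=0$ as $\xi\fs_\psi = 0$. This follows from the identity $\{\eta\in\fT^{**} : |\psi|(\eta^*\eta)=0\} = \fT^{**}(I-\fs_\psi)$ recalled just after the polar decomposition: since $\fs_\psi$ is a projection, membership of $\xi$ in $\fT^{**}(I-\fs_\psi)$ is equivalent to $\xi\fs_\psi=0$.

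Next, I would invoke absolute continuity in the form $\fs_\phi\leq \fs_\psi$ (noted immediately after the definition of $\ll$). Multiplying $\xi\fs_\psi=0$ on the right by $\fs_\phi$ gives $\xi\fs_\phi = \xi\fs_\psi\fs_\phi = 0$, which, by the same support-projection characterization applied to $\phi$, yields $|\phi|(\xi^*\xi)=0$.

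Finally, using the identity $\widehat\phi = |\phi|(\cdot\, v_\phi)$ from \eqref{Eq:absval}, I would write, for any $\eta\in\fT^{**}$,
\[
\widehat\phi(\xi^*\eta) \;=\; |\phi|(\xi^*\eta v_\phi),
\]
and apply the Cauchy--Schwarz inequality for the positive functional $|\phi|$ to obtain
\[
|\widehat\phi(\xi^*\eta)|^2 \;\leq\; |\phi|(\xi^*\xi)\cdot |\phi|(v_\phi^*\eta^*\eta v_\phi) \;=\; 0.
\]
This gives $\widehat\phi(\xi^*\eta)=0$, as required.

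The argument is essentially a bookkeeping exercise once one identifies $|\psi|(\xi^*\xi)=0$ with the algebraic condition $\xi\fs_\psi=0$; I do not foresee any genuine obstacle. The only subtlety is a notational one, namely keeping track of the side on which the partial isometry $v_\phi$ acts in the polar decomposition, so that the Cauchy--Schwarz step lands precisely on the factor $|\phi|(\xi^*\xi)$ that has already been forced to vanish.
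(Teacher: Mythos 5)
Your proof is correct and follows essentially the same route as the paper: deduce $|\phi|(\xi^*\xi)=0$ from absolute continuity and then apply Cauchy--Schwarz to $|\phi|$ together with the polar decomposition $\widehat\phi=|\phi|(\cdot\,v_\phi)$. The only (harmless) difference is that you pass through the support projections $\fs_\phi\leq\fs_\psi$ to get $|\phi|(\xi^*\xi)=0$, whereas the paper obtains it in one step directly from the stated definition of $\phi\ll\psi$.
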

\begin{proof}
Choose a partial isometry $v\in \fT^{**}$ with the property that
$
\widehat\phi=|\phi|(\cdot v).
$
Let $\eta\in \fT^{**}$. By assumption, we have that $|\phi|(\xi^*\xi)=0$, so by the Schwarz inequality we find
\begin{align*}
|\widehat\phi(\xi^*\eta)|=| |\phi|(\xi^*\eta v)|\leq  \|\eta v\| \sqrt{|\phi|(\xi^*\xi)}=0.
\end{align*}
\end{proof}

We will  also require the following basic observation.

\begin{lemma}\label{L:ACmodule}
Let $\fT$ be a $\rC^*$-algebra and let $\psi:\fT\to \bC$ be a bounded linear functional. Let $\eta\in \fT^{**}$ and put $\phi=\widehat\psi(\cdot \eta)$.
Then, $\phi$ is absolutely continuous with respect to $\psi$.
\end{lemma}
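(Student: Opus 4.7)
The plan is to unfold the definition of $\phi\ll\psi$ and directly verify the required implication. Recall that, by the discussion in Subsection~\ref{SS:AC}, $\phi\ll\psi$ amounts to saying that whenever $\xi\in\fT^{**}$ satisfies $|\psi|(\xi^*\xi)=0$, one also has $|\phi|(\xi^*\xi)=0$. So I would fix such a~$\xi$ and aim to show $|\phi|(\xi^*\xi)=0$.

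First, I would verify the compatibility identity $\widehat{\phi}(\zeta)=\widehat{\psi}(\zeta\eta)$ for every $\zeta\in\fT^{**}$. This is immediate from the fact that right multiplication by $\eta$ is weak-$*$ continuous on $\fT^{**}$: the map $\zeta\mapsto\widehat{\psi}(\zeta\eta)$ is therefore weak-$*$ continuous on $\fT^{**}$, agrees with $\phi$ on~$\fT$, and must coincide with $\widehat{\phi}$ by uniqueness of the weak-$*$ extension.

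Next, using the identities \eqref{Eq:absval} applied in turn to $\phi$ and to $\psi$, I would rewrite
\[
|\phi|(\xi^*\xi)
=\widehat{\phi}(\xi^*\xi\,v_\phi^*)
=\widehat{\psi}(\xi^*\xi\,v_\phi^*\eta)
=|\psi|(\xi^* y),
\]
where $y=\xi v_\phi^*\eta v_\psi\in\fT^{**}$. The Cauchy--Schwarz inequality for the positive linear functional $|\psi|$ then yields
\[
\bigl(|\phi|(\xi^*\xi)\bigr)^{2}
=\bigl|\,|\psi|(\xi^* y)\,\bigr|^{2}
\leq |\psi|(\xi^*\xi)\,|\psi|(y^*y)=0,
\]
so that $|\phi|(\xi^*\xi)=0$, as desired.

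I do not foresee any real obstacle: the argument is little more than a short algebraic manipulation with the polar decomposition formulas \eqref{Eq:absval}, followed by a single application of Cauchy--Schwarz. The main conceptual point is simply that the hypothesis puts $\phi$ in the natural $\fT^{**}$-module generated by $\psi$, and such module operations automatically preserve absolute continuity.
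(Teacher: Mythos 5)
Your proposal is correct and follows essentially the same route as the paper: both express $|\phi|(\xi^*\xi)$ via the polar decomposition formulas as $|\psi|(\xi^*\xi\, v_\phi^*\eta\, v_\psi)$ and kill it with the Cauchy--Schwarz inequality for the positive functional $|\psi|$, using $|\psi|(\xi^*\xi)=0$. The only cosmetic difference is that you spell out the identity $\widehat{\phi}=\widehat{\psi}(\cdot\,\eta)$ on all of $\fT^{**}$ via weak-$*$ continuity of right multiplication, which the paper uses implicitly.
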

\begin{proof}
Let $\xi\in \fT^{**}$ such that $|\psi|(\xi^*\xi)=0$. Choose partial isometries $v,w\in \fT^{**}$ with the property that
\[
\widehat{\psi}=|\psi|(\cdot v) \qand |\phi|=\widehat\phi(\cdot w).
\]
We find
\begin{align*}
|\phi|(\xi^*\xi)&=\widehat\phi(\xi^* \xi w)=\widehat\psi(\xi^*\xi w\eta)\\
&=|\psi|(\xi^*\xi w\eta v).
\end{align*}
Applying the Schwarz inequality for the positive linear functional $|\psi|$, we infer
\begin{align*}
\left| |\phi|(\xi^*\xi)\right|^2&= \left||\psi|(\xi^*\xi w\eta v) \right|^2\\
&\leq |\psi|(\xi^*\xi) |\psi|( v^* \eta^* w^* \xi^* \xi w \eta v)\\
&=0.
\end{align*}
We conclude that $\phi$ is absolutely continuous with respect to $\psi$, as desired.
\end{proof}

Next, we illustrate with a simple example that the previous pair of results are truly one-sided.

\begin{example}\label{E:ACmodule}
Let $\fT=B(\bC^2)$ and let $\{e_1,e_2\}$ be the standard orthonormal basis of $\bC^2$. Because $\fT$ is finite-dimensional, we see that $\fT=\fT^{**}$. Let $\psi:\fT\to \bC$ be the positive linear functional defined as
\[
\psi(t)=\langle te_1,e_1\rangle, \quad t\in \fT.
\]
Let $u\in \fT$ be the unitary operator 
\[
u=\begin{bmatrix}
0 & 1\\
1 & 0
\end{bmatrix}.
\]
Define bounded linear functionals  $\phi$ and $\theta$ on $\fT$ as $\theta=\psi(u \cdot)$ and $\phi=\psi(\cdot u)$, so that
\[
\theta(t)=\langle te_1,e_2 \rangle \qand \phi(t)=\langle te_2,e_1 \rangle
\]
for every $t\in \fT$.
We also consider the positive linear functional $\omega$ on $\fT$ defined as $\omega=\psi(u^* \cdot u)$, so that
\[
\omega(t)=\langle te_2,e_2\rangle, \quad t\in \fT.
\]
It is readily checked that $|\theta|=\omega$. Let $\xi\in \fT$ be the orthogonal projection onto $\bC e_2$. Then, 
\[
|\psi|(\xi^*\xi)=0
\]
yet
\[
|\theta|(\xi^*\xi)=1.
\]
We conclude that $\theta$ and $\omega$ are not absolutely continuous with respect to $\psi$ (compare with the statement of Lemma \ref{L:ACmodule}). On the other hand, $\phi$ is absolutely continuous with respect to $\psi$ by virtue of Lemma \ref{L:ACmodule}, yet
\[
\phi(u \xi)=1
\]
(compare with the statement of Lemma \ref{L:statevanish}).
\qed
\end{example}
We remark here that our notion of absolute continuity differs from that considered in \cite{exel1990}. Indeed, according to the definition adopted therein, all states are mutually absolutely continuous in the previous example since all non-zero $*$-representations of the $\rC^*$-algebra $B(\bC^2)$ are unitarily equivalent to the identity representation, and hence injective.

Example \ref{E:ACmodule} illustrates that care must be taken when working with absolute continuity in the noncommutative context. Things are simpler in the commutative case, where our definition of absolute continuity coincides with the classical measure theoretic one. 

\begin{lemma}\label{L:ACmeasure}
Let $X$ be a compact Hausdorff space. Let $\mu$ and $\nu$ be regular Borel measures on $X$ and define bounded linear functionals  $\phi$ and $\psi$ on  $\rC(X)$ as
\[
\phi(f)=\int_X fd\mu \qand \psi(f)=\int_X fd\nu
\]
for every $f\in \rC(X)$. Then, $\phi\ll\psi$ as functionals if and only if $\mu\ll\nu$ as measures.
\end{lemma}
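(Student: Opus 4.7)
The plan is to reduce to positive measures via Lemma \ref{L:absvalmeasure}, and then use the GNS construction to translate both conditions into the existence of an induced normal $*$-homomorphism between the relevant $L^\infty$-algebras. Concretely, by Lemma \ref{L:absvalmeasure} the absolute values $|\phi|$ and $|\psi|$ correspond to integration against $|\mu|$ and $|\nu|$. Since $\phi\ll\psi$ depends only on $|\phi|$ and $|\psi|$, and since $\mu\ll\nu$ holds if and only if $|\mu|\ll|\nu|$, I may assume throughout that $\mu$ and $\nu$ are positive.

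For each positive regular Borel measure $\tau$ on $X$, the GNS representation of $\phi_\tau$ realizes $\widehat{\phi_\tau}$ through a surjective normal $*$-homomorphism $\pi_\tau:\rC(X)^{**}\to L^\infty(\tau)$ extending the canonical map $\rC(X)\to L^\infty(\tau)$, and satisfying $\widehat{\phi_\tau}(\xi)=\int_X \pi_\tau(\xi)\,d\tau$ for every $\xi\in\rC(X)^{**}$. In particular, $\widehat{\phi_\tau}(\xi^*\xi)=0$ if and only if $\pi_\tau(\xi)=0$, so
$$
\phi\ll\psi \iff \ker\pi_\nu\subseteq\ker\pi_\mu.
$$
In view of the surjectivity and normality of $\pi_\nu$, the right-hand condition is in turn equivalent to the existence of a unique normal $*$-homomorphism $j:L^\infty(\nu)\to L^\infty(\mu)$ with $j\circ\pi_\nu=\pi_\mu$, which necessarily sends the class of $f\in\rC(X)$ in $L^\infty(\nu)$ to its class in $L^\infty(\mu)$.

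It then remains to verify that such a $j$ exists if and only if $\mu\ll\nu$. The forward implication is immediate: assuming $\mu\ll\nu$, the map passing $\nu$-equivalence classes to $\mu$-equivalence classes is well defined and normal, and satisfies the required compatibility on $\rC(X)$. For the converse, I would appeal to the preadjoint $j_*:L^1(\mu)\to L^1(\nu)$, which satisfies $\int_X j(g)h\,d\mu=\int_X g\,j_*(h)\,d\nu$ for all $g\in L^\infty(\nu)$ and $h\in L^1(\mu)$. Specializing $g$ to (the class of) $f\in\rC(X)$ and taking $h=1$, which lies in $L^1(\mu)$ because $\mu$ is finite, yields $\int_X f\,d\mu=\int_X f\,j_*(1)\,d\nu$ for every $f\in\rC(X)$; Riesz uniqueness then gives $d\mu=j_*(1)\,d\nu$, so $\mu\ll\nu$.

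The principal technical point I foresee is the first step, namely the construction of the normal surjection $\pi_\tau:\rC(X)^{**}\to L^\infty(\tau)$ together with the integral formula for $\widehat{\phi_\tau}$; this follows from identifying $L^\infty(\tau)$ with the bicommutant $\pi_\tau(\rC(X))''$ inside the multiplication representation on $L^2(\tau)$ and invoking the universal property of the bidual. Once that identification is in place, every remaining step is essentially formal.
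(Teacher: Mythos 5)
Your argument is correct, but it takes a genuinely different route from the paper's. The paper works entirely by hand inside $\rC(X)^{**}$: for the implication $\phi\ll\psi\Rightarrow\mu\ll\nu$ it constructs, for each Borel set $E$ with $|\nu|(E)=0$, an explicit positive element $\xi_E\in \rC(X)^{**}$ acting on a functional $\theta$ by $\xi_E(\theta)=\tau_\theta(E)$, and simply evaluates; for the converse it takes the Radon--Nikodym derivative $r=d|\mu|/d|\nu|\in L^1(X,|\nu|)$, approximates it in $L^1$ by a continuous function $s$, and combines Lemmas \ref{L:ACmodule} and \ref{L:statevanish} with a norm estimate. You instead pass to the normal surjections $\pi_\tau:\rC(X)^{**}\to L^\infty(\tau)$ coming from the GNS/bidual universal property, recast $\phi\ll\psi$ as the kernel containment $\ker\pi_\nu\subseteq\ker\pi_\mu$, and then as the existence of a normal $*$-homomorphism $j:L^\infty(\nu)\to L^\infty(\mu)$ intertwining the two quotients, finishing with the preadjoint trick $d\mu=j_*(1)\,d\nu$. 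Both proofs are sound; yours is more structural and in effect computes the support projection $\fs_{\phi}$ of a measure functional (as the central projection complementary to $\ker\pi_{|\mu|}$), which makes the equivalence conceptually transparent, at the cost of invoking the standard but nontrivial identification of $\pi_\tau(\rC(X))''$ with $L^\infty(\tau)$ and a couple of assertions you leave to the reader (normality of the quotient map $j$ via its $L^1$ preadjoint, and regularity of $j_*(1)\,d\nu$ so that Riesz uniqueness applies). The paper's proof is longer on the converse direction but stays entirely within the elementary toolkit it has already built in Section \ref{S:AC}.
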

\begin{proof}
Note first that 
\[
|\phi|(f)=\int_X fd|\mu| \qand |\psi|(f)=\int_X fd|\nu|
\]
for every $f\in \rC(X)$ by virtue of Lemma \ref{L:absvalmeasure}.

Assume next that $\phi\ll\psi$. Let $E\subset X$ be a measurable set such that $|\nu|(E)=0$. Given $\theta\in \rC(X)^*$, there is a regular Borel measure $\tau_\theta$ such that 
\[
\theta(f)=\int_X fd\tau_\theta, \quad f\in \rC(X).
\]
We may thus define $\xi_E\in \rC(X)^{**}$ as
\[
\xi_E(\theta)=\tau_\theta(E), \quad \theta\in \rC(X)^*. 
\]
It is readily verified that $\xi_E$ is a positive element. Observe that $|\psi|(\xi_E)=|\nu|(E)=0$. Since $\phi\ll\psi$, it follows that $|\mu|(E)=|\phi|(\xi_E)=0$. We conclude that $\mu\ll\nu$. 

Conversely, assume that $\mu\ll\nu$. Let $\xi\in \rC(X)^{**}$ such that $|\psi|(\xi^*\xi)=0$. By the Radon--Nikodym theorem, there is $r\in L^1(X,|\nu|)$ with the property that
\[
|\phi|(f)=\int_X frd|\nu|, \quad f\in \rC(X).
\]
Let $\eps>0$ and choose $s\in \rC(X)$ with the property that $\|r-s\|_{L^1(X,|\nu|)}<\eps$. We infer that $\||\phi|-|\psi|(\cdot s)\|_{\rC(X)^*}<\eps$. On the other hand, $|\psi|(\cdot s)\ll \psi$ by Lemma \ref{L:ACmodule}, whence $|\psi|(\xi^*\xi s)=0$ by Lemma \ref{L:statevanish} and
\[
|\phi|(\xi^*\xi)\leq \eps\|\xi\|^2+||\psi|(\xi^*\xi s)|=\eps\|\xi\|^2.
\]
We conclude that $|\phi|(\xi^*\xi)=0$, so indeed $\phi\ll\psi$.
\end{proof}

The existence of an analogue of the Radon--Nikodym derivative can be a subtle issue in general von Neumann algebras; see for instance \cite{sakai1965},\cite{PT1973},\cite{vaes2001}. Nevertheless, we can prove an elementary approximate version that is sufficient for our purposes in this paper. Given a bounded linear functional $\phi:\fT\to \bC$, we define the adjoint functional $\phi^\dagger:\fT\to \bC$ as
\[
\phi^\dagger(t)=\ol{\phi(t^*)}, \quad t\in \fT.
\]

\begin{theorem}\label{T:RN}
Let $\fT$ be a $\rC^*$-algebra and let $\phi$ and $\psi$ be bounded linear functionals on $\fT$. Then, the following statements hold.
\begin{enumerate}[{\rm (i)}]
\item The functional $\phi$ is absolutely continuous with respect to $\psi$ if and only $\phi$ belongs to the norm closure of 
$
\{\psi(\cdot t):t\in \fT\}.
$

\item The functional $\phi^\dagger$ is absolutely continuous with respect to $\psi^\dagger$ if and only $\phi$ belongs to the norm closure of 
$
\{\psi(t\cdot ):t\in \fT\}.
$
\end{enumerate}

\end{theorem}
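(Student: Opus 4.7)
The plan is to prove (i) via the bipolar theorem in the duality between $\fT^*$ and $\fT^{**}$: for any subspace $V\subseteq\fT^*$, the norm closure of $V$ coincides with the double annihilator $(V_\perp)^\perp$. With $V=\{\psi(\cdot t)\colon t\in\fT\}$, the task reduces to identifying $(V_\perp)^\perp$ with $\{\phi\in\fT^*\colon\phi\ll\psi\}$. Once (i) is in hand, part (ii) will follow from (i) applied to $\psi^\dagger$ together with the isometric involution $\phi\mapsto\phi^\dagger$: a short calculation gives $(\psi^\dagger(\cdot t))^\dagger=\psi(t^*\cdot)$, so $\dagger$ maps $\{\psi^\dagger(\cdot t)\colon t\in\fT\}$ onto $\{\psi(t\cdot)\colon t\in\fT\}$, and the equivalence for $\psi$ in part (ii) is obtained from the equivalence for $\psi^\dagger$ in part (i) by applying $\dagger$ to both sides.

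First I would compute $V_\perp\subseteq\fT^{**}$. Since the unique weak-$*$ continuous extension of $\psi(\cdot t)$ to $\fT^{**}$ is $\xi\mapsto\widehat\psi(\xi t)$, one has $V_\perp=\{\xi\in\fT^{**}\colon\widehat\psi(\xi t)=0\text{ for all }t\in\fT\}$. Weak-$*$ density of $\fT$ in $\fT^{**}$, together with the separate weak-$*$ continuity of multiplication and of $\widehat\psi$, promotes the quantifier to all $\eta\in\fT^{**}$, so $V_\perp=\{\xi\colon\widehat\psi(\xi\eta)=0\text{ for all }\eta\in\fT^{**}\}$. Inserting the polar decomposition $\widehat\psi=|\psi|(\cdot v_\psi)$ and performing the substitution $\eta=\eta_1v_\psi^*$, the relations $v_\psi^*v_\psi=\fs_\psi$ and $|\psi|(\cdot\fs_\psi)=|\psi|$ (the latter expressing that $\fs_\psi$ is the support of $|\psi|$) collapse the condition to $|\psi|(\xi\eta_1)=0$ for every $\eta_1\in\fT^{**}$. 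Specializing $\eta_1=\xi^*$ and invoking the support characterization $|\psi|(aa^*)=0\iff\fs_\psi a=0$ for normal positive functionals forces $\fs_\psi\xi=0$; the reverse implication is immediate from $|\psi|=|\psi|(\fs_\psi\cdot\fs_\psi)$. Thus $V_\perp=(1-\fs_\psi)\fT^{**}$.

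Next I would identify $(V_\perp)^\perp$, namely the set of $\phi\in\fT^*$ with $\widehat\phi((1-\fs_\psi)\xi)=0$ for all $\xi\in\fT^{**}$. The same substitution trick, now with $\widehat\phi=|\phi|(\cdot v_\phi)$ and $\xi=\xi_1v_\phi^*$, reduces the condition to $|\phi|((1-\fs_\psi)\xi_1)=0$ for all $\xi_1\in\fT^{**}$. Taking $\xi_1=1-\fs_\psi$ yields $|\phi|(1-\fs_\psi)=0$, from which the support property of $|\phi|$ gives $(1-\fs_\psi)\fs_\phi=0$, i.e., $\fs_\phi\leq\fs_\psi$, i.e., $\phi\ll\psi$. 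Conversely, $\fs_\phi\leq\fs_\psi$ gives $\fs_\phi(1-\fs_\psi)=0$, and combined with $|\phi|=|\phi|(\fs_\phi\cdot\fs_\phi)$ this forces $\widehat\phi((1-\fs_\psi)\xi)=0$ for every $\xi$. This completes (i), and hence (ii) by the first paragraph.

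The main obstacle I anticipate is the one-sided support bookkeeping. The polar decomposition yields only $v_\psi^*v_\psi=\fs_\psi$, not a symmetric companion on the other side, and the resulting asymmetry, already visible in Example~\ref{E:ACmodule}, is precisely what dictates placing $v_\psi^*$ on the right in the substitution $\eta=\eta_1v_\psi^*$, and what forces part (ii) to involve $\psi^\dagger$ rather than $\psi$. One must also check that the substitutions $\eta=\eta_1v_\psi^*$ and $\xi=\xi_1v_\phi^*$ parametrize enough of $\fT^{**}$; their ranges are proper subsets in general, but the identities $|\psi|(y\fs_\psi)=|\psi|(y)$ and $|\phi|(y\fs_\phi)=|\phi|(y)$ absorb the apparent deficit, and the rest of the argument then proceeds without further complication.
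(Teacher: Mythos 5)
Your proof is correct. The forward direction of (i) is the paper's argument in only slightly different clothing: both show that every $\xi\in\fT^{**}$ annihilating all the functionals $\psi(\cdot\, t)$ is annihilated by $\widehat\phi$ whenever $\phi\ll\psi$, and then invoke Hahn--Banach; and part (ii) by applying $\dagger$ is exactly what the paper does. Where you genuinely diverge is the converse of (i). The paper never computes the pre-annihilator: it picks $t\in\fT$ with $\|\phi-\psi(\cdot\, t)\|<\eps$, uses Lemma \ref{L:statevanish} to kill $\widehat\psi(\xi^*\xi v t)$, and estimates $|\phi|(\xi^*\xi)\leq \eps\|\xi\|^2$. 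You instead identify the pre-annihilator exactly as $(I-\fs_\psi)\fT^{**}$ and read off $\fs_\phi\leq\fs_\psi$ from $|\phi|(I-\fs_\psi)=0$, so both directions fall out of a single bipolar computation. Your route is more structural and makes the support projections do all the work --- the identity $V_\perp=(I-\fs_\psi)\fT^{**}$ is the single-functional analogue of Lemma \ref{L:ACideal} --- at the cost of the one-sided bookkeeping you correctly flag (placing $v_\psi^*$ on the right and absorbing it via $|\psi|(\cdot\,\fs_\psi)=|\psi|$); the paper's $\eps$-approximation avoids naming $\fs_\psi$ altogether. Both arguments are sound, and yours has the mild advantage of making visible exactly which corner of $\fT^{**}$ the closure of $\{\psi(\cdot\, t):t\in\fT\}$ is dual to.
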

\begin{proof}
(i) Put 
\[
\D_\psi=\{\psi(\cdot t):t\in \fT\}\subset \fT^*. 
\] 
Assume first that $\phi$ is absolutely continuous with respect to $\psi$. Let $\xi\in \fT^{**}$ be such that $\widehat\delta(\xi)=0$ for every $\delta\in \D_\psi$. This implies that
\[
|\psi|(\xi \eta)=0, \quad \eta\in \fT^{**}
\]
and in particular $|\psi|(\xi \xi^*)=0$. By assumption, we find $\widehat\phi(\xi)=0$ upon invoking Lemma \ref{L:statevanish}. The Hahn-Banach theorem then allows us to conclude that $\phi$ lies in the norm closure of $\D_\psi$. 

Conversely, assume that $\phi$ lies in the norm closure of $\D_\psi$.  Let $\xi\in \fT^{**}$ such that $|\psi|(\xi^*\xi)=0$. Let $\eps>0$. There is $t\in \fT$ such that
\[
\|\phi-\psi(\cdot t)\|<\eps.
\]
 Let $v\in \fT^{**}$ be a partial isometry such that $|\phi|=\widehat \phi(\cdot v)$. It follows from Lemma \ref{L:statevanish} that $\widehat\psi(\xi^*\xi vt)=0$, so that
\begin{align*}
|\phi|(\xi^*\xi)&=\widehat\phi(\xi^*\xi v)\leq |\widehat\psi(\xi^*\xi vt)|+\eps \|\xi\|^2=\eps\|\xi\|^2.
\end{align*}
Because $\eps$ is arbitrary, we conclude that $|\phi|(\xi^*\xi)=0$, whence $\phi$ is absolutely continuous with respect to $\psi$.

(ii) This follows immediately from (i) upon taking adjoints.
\end{proof}

When $\frak{T}$ is commutative, it readily follows from Theorem \ref{T:RN} that $\phi\ll\psi$ and $\phi^\dagger\ll \psi^\dagger$ are equivalent. 
However, we caution the reader that absolute continuity is generally not preserved upon taking adjoints. Indeed, Example \ref{E:ACmodule} exhibits a state $\psi$ and a bounded linear functional $\phi$ on $B(\bC^2)$ such that $\phi\ll\psi$  yet $\phi^\dagger$ is \emph{not} absolutely continuous with respect to $\psi=\psi^\dagger$.


\section{Non-commutative measure theory: decomposition of the dual space}\label{S:GKS}

In this section, we delve into non-commutative measure theory, in the sense that we study dual spaces of arbitrary $\rC^*$-algebras. Our goal is to establish a direct sum decomposition of these dual spaces, where the splitting is performed using the notions of absolute continuity and of singularity introduced in Section \ref{S:AC}.

Let $\fT$ be a unital $\rC^*$-algebra. Recall that the state space of $\fT$ is the set of unital positive linear functionals on $\fT$. Let $\Delta$ be a norm-closed convex subset of the state space of $\fT$. We let $\AC(\Delta)$ denote the subset of those bounded linear functionals $\phi:\fT\to\bC$ that are absolutely continuous with respect to some $\delta\in \Delta$. We start by establishing some elementary properties of $\AC(\Delta)$.

\begin{lemma}\label{L:ACclosed}
The set $\AC(\Delta)$ is a norm-closed subspace of $\fT^{*}$.
\end{lemma}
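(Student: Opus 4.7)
The plan is to leverage Theorem \ref{T:RN}(i) as the main engine. For each fixed state $\delta \in \Delta$, the set
\[
\AC_\delta := \{\phi \in \fT^* : \phi \ll \delta\}
\]
equals the norm closure of the linear subspace $\{\delta(\cdot\, t) : t \in \fT\} \subset \fT^*$, and is therefore automatically a norm-closed linear subspace of $\fT^*$. Since $\AC(\Delta) = \bigcup_{\delta \in \Delta} \AC_\delta$, the work reduces to showing that this union is itself closed under finite linear combinations and under norm limits.

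For closure under linear combinations, suppose $\phi_i \in \AC_{\delta_i}$ for $i = 1, 2$. By convexity of $\Delta$, the state $\delta := \frac{1}{2}(\delta_1 + \delta_2)$ again lies in $\Delta$, and the inequality $\delta_i \leq 2\delta$ of positive functionals forces every $\xi \in \fT^{**}$ with $\delta(\xi^*\xi)=0$ to also satisfy $\delta_i(\xi^*\xi)=0$; hence $\delta_i \ll \delta$. Transitivity of absolute continuity (immediate from the characterization $\phi \ll \psi \Leftrightarrow \fs_\phi \leq \fs_\psi$) then gives $\phi_i \ll \delta$, so both $\phi_i$ belong to the single linear subspace $\AC_\delta$, and so do $\phi_1 + \phi_2$ and all scalar multiples.

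The main obstacle is closure under norm limits, since there is no a priori reason that a single element of $\Delta$ should control an entire sequence of dominating states. Suppose $\phi_n \to \phi$ in norm with $\phi_n \in \AC_{\delta_n}$ for some $\delta_n \in \Delta$. I would set
\[
\delta := \sum_{n=1}^\infty 2^{-n} \delta_n \in \fT^*,
\]
a norm-convergent series because $\|\delta_n\|=1$ for every $n$. To see $\delta \in \Delta$, observe that the renormalized partial sums
\[
\sigma_N := \frac{1}{1 - 2^{-N}} \sum_{n=1}^N 2^{-n} \delta_n
\]
are genuine convex combinations of elements of $\Delta$, hence lie in $\Delta$, and converge in norm to $\delta$; norm-closedness of $\Delta$ then gives $\delta \in \Delta$. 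Since $\delta \geq 2^{-n}\delta_n$ in the order on positive functionals, each $\delta_n$ is absolutely continuous with respect to $\delta$, and transitivity yields $\phi_n \in \AC_\delta$ for every $n$. The norm-closedness of $\AC_\delta$ established in the first paragraph then delivers $\phi \in \AC_\delta \subseteq \AC(\Delta)$, completing the proof.
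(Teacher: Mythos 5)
Your proof is correct, and while the two dominating-state constructions --- $(\delta_1+\delta_2)/2$ for linear combinations and $\sum_{n}2^{-n}\delta_n$ for norm limits --- are exactly the ones the paper uses, the mechanism for concluding is genuinely different. The paper, having produced the dominating state $\delta$, fixes $\xi\in\fT^{**}$ with $\widehat\delta(\xi^*\xi)=0$ and verifies by hand that $|c\alpha+d\beta|(\xi^*\xi)=0$ (respectively $|\alpha|(\xi^*\xi)=0$ for the limit functional) using the polar decompositions of the functionals involved together with Lemma \ref{L:statevanish}. You instead outsource this to Theorem \ref{T:RN}(i), which identifies $\{\phi\in\fT^*:\phi\ll\delta\}$ with the norm closure of the linear subspace $\{\delta(\cdot\, t):t\in\fT\}$ and hence exhibits it as a norm-closed subspace for free; combined with transitivity of $\ll$ (immediate from $\phi\ll\psi\Leftrightarrow\fs_\phi\leq\fs_\psi$) and the observation that $\delta_i\leq 2\delta$, resp.\ $\delta_n\leq 2^n\delta$, forces $\delta_i\ll\delta$, resp.\ $\delta_n\ll\delta$, this reduces both halves of the lemma to membership in a single set $\{\phi:\phi\ll\delta\}$. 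Your route is cleaner and avoids repeating the Schwarz-inequality manipulations, at the cost of invoking the heavier Theorem \ref{T:RN}; since that theorem is proved earlier in the paper and independently of Lemma \ref{L:ACclosed}, there is no circularity. The only step worth spelling out is that the order inequality $\delta_i\leq 2\delta$ on $\fT$ passes to the weak-$*$ continuous extensions on $\fT^{**}$ (the canonical extension of a positive functional is positive, by Kaplansky density), which is what the definition of $\ll$ actually requires; your verification that $\sum_n 2^{-n}\delta_n\in\Delta$ via renormalized partial sums is, if anything, more careful than the paper's.
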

\begin{proof}
Let $\alpha,\beta\in \AC(\Delta)$ and let $c,d\in \bC$. Then, there are states $\delta,\eps\in \Delta$ with the property that $\alpha\ll\delta$ and $\beta\ll\eps$. Choose partial isometries $u,v,w\in \fT^{**}$ with the property that
\[
|c\alpha+d\beta|=(c\widehat\alpha+d\widehat \beta)(\cdot u), \quad \widehat\alpha=|\alpha|(\cdot v) \qand \widehat\beta=|\beta|(\cdot w).
\]
Since $\Delta$ is assumed to be convex, we see that the state $\phi=(\delta+\eps)/2$ lies in $\Delta$. Let $\xi\in \fT^{**}$ with the property that $\widehat{\phi}(\xi^*\xi)=0$. Then, 
\[
\widehat\delta(\xi^*\xi)=\widehat\eps(\xi^*\xi)=0.
\]
Applying Lemma \ref{L:statevanish}, we see that 
\[
\widehat\alpha(\xi^*\xi uv)=\widehat\beta(\xi^*\xi uw)=0.
\]
We infer
\begin{align*}
|c\alpha+d\beta|(\xi^*\xi)&=(c\widehat\alpha+d\widehat\beta)(\xi^*\xi u)\\
&=c|\alpha|(\xi^*\xi u v)+d|\beta|(\xi^*\xi u w)\\
&=0
\end{align*}
so that $c\alpha+d\beta\ll\phi$. We conclude that $c\alpha+d\beta\in \AC(\Delta)$, whence $\AC(\Delta)$ is a subspace of $\fT^*$. 

It only remains to show that $\AC(\Delta)$ is closed in the norm topology of $\fT^*$. To see this, let $(\alpha_n)$ be a sequence in $\AC(\Delta)$ converging in norm to some $\alpha\in \fT^*$. For each $n$, there is $\delta_n\in \Delta$ with the property that $\alpha_n\ll\delta_n$. Put $\delta=\sum_{n=1}^\infty 2^{-n}\delta_n$. Since $\Delta$ is convex and norm-closed, we see that $\delta\in \Delta$. Let $\xi\in \fT^{**}$ such that $\widehat\delta(\xi^*\xi)=0$. In particular, we see that $\widehat{\delta_n}(\xi^*\xi)=0$ and thus $|\alpha_n|(\xi^*\xi)=0$ for every $n$. Choose a partial isometry $v\in \fT^{**}$ such that $|\alpha|=\widehat\alpha(\cdot v)$.
By Lemma \ref{L:statevanish}, we see that
\[
\widehat{\alpha_n}(\xi^*\xi v)=0
\]
for every $n$. We conclude that
\begin{align*}
|\alpha|(\xi^*\xi)&=\widehat\alpha(\xi^*\xi v)=\lim_{n\to\infty}\widehat{\alpha_n}(\xi^*\xi v)=0.
\end{align*}
Consequently, $\alpha\ll\delta$ so indeed $\alpha\in \AC(\Delta)$.
\end{proof}

Next, we show that $\AC(\Delta)$ gives rise to a projection in $\fT^{**}$.

\begin{lemma}\label{L:ACideal}
There is a self-adjoint projection $\fr_\Delta\in \fT^{**}$ with the property that 
\[
\AC(\Delta)^\perp=(I-\fr_\Delta)\fT^{**} \qand \AC(\Delta)=((I-\fr_\Delta)\fT^{**})_\perp.
\]
\end{lemma}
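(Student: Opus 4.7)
The plan is to construct $\fr_\Delta$ from the support projections of the states in $\Delta$, identify $\AC(\Delta)^\perp$ via a direct annihilator computation, and then obtain the second identity by the bipolar theorem.

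First I would set $\fr_\Delta=\sup\{\fs_\delta:\delta\in\Delta\}$, the supremum being computed in the projection lattice of $\fT^{**}$. This is automatically a self-adjoint projection, and a standard argument in the universal representation (realizing $\fr_\Delta$ as the projection onto the closure of the linear span of the ranges of the $\fs_\delta$) shows that for $\xi\in \fT^{**}$, one has $\fr_\Delta\xi=0$ precisely when $\fs_\delta\xi=0$ for every $\delta\in\Delta$.

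Next I would verify that $\AC(\Delta)^\perp=(I-\fr_\Delta)\fT^{**}$. Combining Theorem \ref{T:RN}(i) with Lemma \ref{L:ACmodule}, the subspace $\AC(\Delta)$ coincides with the norm closure of the linear span of $S:=\{\delta(\cdot\,t):\delta\in\Delta,\ t\in\fT\}$, so taking annihilators gives $\AC(\Delta)^\perp=S^\perp$. Now $\xi\in S^\perp$ means $\widehat\delta(\xi t)=0$ for every $\delta\in\Delta$ and every $t\in\fT$. Fixing $\delta$ and $\xi$, the functional $\eta\mapsto\widehat\delta(\xi\eta)$ on $\fT^{**}$ is weak-$*$ continuous, being the composition of weak-$*$ continuous left multiplication by $\xi$ with $\widehat\delta$, so its vanishing on the weak-$*$ dense subalgebra $\fT$ forces its vanishing on all of $\fT^{**}$; specializing $\eta=\xi^*$ yields $\widehat\delta(\xi\xi^*)=0$, while the converse implication is immediate from the Schwarz inequality for $\widehat\delta$. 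Since $\delta$ is a state, $v_\delta=\fs_\delta$ and $|\delta|=\delta$, and so the support condition of Section \ref{SS:polar} (applied to $\xi^*$) gives $\widehat\delta(\xi\xi^*)=0$ if and only if $\xi^*\fs_\delta=0$, which upon taking adjoints is the same as $\fs_\delta\xi=0$. Together with the first paragraph, this produces the desired equality.

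Finally, the identity $\AC(\Delta)=((I-\fr_\Delta)\fT^{**})_\perp$ follows from the bipolar theorem for the dual pair $(\fT^*,\fT^{**})$: since $\AC(\Delta)$ is a norm-closed (hence weakly closed, by Mazur's theorem) subspace of $\fT^*$ by Lemma \ref{L:ACclosed}, one has $\AC(\Delta)=(\AC(\Delta)^\perp)_\perp=((I-\fr_\Delta)\fT^{**})_\perp$.

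The main point that requires care is the compatibility of the abstract supremum $\fr_\Delta$ with the individual support projections $\fs_\delta$; once that is established, the rest of the proof amounts to assembling the technical results of Section \ref{S:AC} together with standard duality.
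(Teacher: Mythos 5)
Your proof is correct, but it takes a different route from the paper's. The paper does not construct $\fr_\Delta$ explicitly at this stage: it observes, via Lemma \ref{L:ACmodule}, that $\AC(\Delta)^\perp$ is a weak-$*$ closed \emph{right ideal} of $\fT^{**}$, and then invokes the standard structure theorem for such ideals (\cite[Proposition 1.10.1]{sakai1971}) to obtain an abstract projection $\fr_\Delta$ with $\AC(\Delta)^\perp=(I-\fr_\Delta)\fT^{**}$; the identification $\fr_\Delta=\vee_{\delta\in\Delta}\fs_\delta$ is deferred to the subsequent Lemma \ref{L:rproj}. You instead define $\fr_\Delta=\vee_{\delta\in\Delta}\fs_\delta$ from the outset and compute the annihilator directly, using Theorem \ref{T:RN}(i) together with Lemmas \ref{L:ACmodule} and \ref{L:ACclosed} to replace $\AC(\Delta)$ by the span of $\{\delta(\cdot\,t):\delta\in\Delta,\ t\in\fT\}$, and then reducing membership in the annihilator to the condition $\fs_\delta\xi=0$ for all $\delta$ via separate weak-$*$ continuity of multiplication, the Schwarz inequality, and the defining property of the support projection. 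The closing bipolar/Hahn--Banach step is the same in both arguments. Your approach costs slightly more (it leans on Theorem \ref{T:RN}, which the paper's proof of this lemma does not need), but it buys the content of Lemma \ref{L:rproj} simultaneously, so nothing is lost overall; each step you use is available at this point in the paper, and I see no gap.
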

\begin{proof}
First note that $\AC(\Delta)^\perp$ is clearly a weak-$*$ closed subspace of $\fT^{**}$. We claim that in fact it is a right ideal.  Let $\xi\in \AC(\Delta)^\perp$ and let $\eta\in \fT^{**}$. It follows from Lemma \ref{L:ACmodule} that the functional
\[
t\mapsto \widehat\phi(\cdot \eta), \quad t\in \fT
\]
lies in $\AC(\Delta)$ whenever $\phi\in \AC(\Delta)$. Thus, we find
\[
\widehat\phi(\xi\eta)=0, \quad \phi\in \AC(\Delta)
\]
which is equivalent to $\xi\eta\in \AC(\Delta)^\perp$. We conclude that indeed $\AC(\Delta)^\perp$ is a weak-$*$ closed right ideal of $\fT^{**}$, so by virtue of  \cite[Proposition 1.10.1]{sakai1971}, there is a self-adjoint projection $\fr_\Delta\in \fT^{**}$ with the property that 
\[
\AC(\Delta)^\perp=(I-\fr_\Delta)\fT^{**}.
\]
Finally, since $\AC(\Delta)$ is a norm-closed subspace by Lemma \ref{L:ACclosed}, the Hahn--Banach theorem implies that
\[
\AC(\Delta)=((I-\fr_\Delta)\fT^{**})_\perp.
\]
\end{proof}

Let $\SG(\Delta)$ denote those  bounded linear functionals $\phi:\fT\to\bC$ with the property that $\fs_\phi\fs_\delta=0$ for every $\delta\in \Delta$. In other words, $\SG(\Delta)$ consists of those functionals that are mutually singular with every element in $\Delta$. It follows trivially then  that $\AC(\Delta)\cap \SG(\Delta)=\{0\}$. Our next aim is to show that 
\[
\fT^*=\AC(\Delta)+\SG(\Delta).
\]
We need a few preliminary observations. 

\begin{lemma}\label{L:rproj}
We have that $\fr_\Delta=\vee_{\delta\in \Delta}\fs_{\delta}.$
\end{lemma}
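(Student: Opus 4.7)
The plan is to establish the two inequalities $\fs_\delta\leq \fr_\Delta$ for each $\delta\in \Delta$, and $\fr_\Delta\leq \vee_{\delta\in \Delta}\fs_\delta$. Both rest on the following elementary fact, which I would recall at the outset: for a state $\delta$ on $\fT$, the support projection $\fs_\delta$ coincides with the smallest projection $p\in \fT^{**}$ satisfying $\delta(p)=1$. This is an immediate consequence of the identity $\fT^{**}(I-\fs_\delta)=\{\xi\in\fT^{**}:\delta(\xi^*\xi)=0\}$ recalled in Subsection \ref{SS:polar}, since for any projection $q$ with $\delta(I-q)=0$ we have $(I-q)^*(I-q)=I-q$, and the identity then forces $(I-q)\fs_\delta=0$, that is, $\fs_\delta\leq q$.

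For the first inclusion, I would note that $\delta\ll\delta$ so that $\delta\in \AC(\Delta)$. By Lemma \ref{L:ACideal}, $\widehat{\delta}$ vanishes on $(I-\fr_\Delta)\fT^{**}$; in particular $\widehat{\delta}(I-\fr_\Delta)=0$, so that $\delta(\fr_\Delta)=\delta(I)=1$. The minimality property of the support just recalled then yields $\fs_\delta\leq \fr_\Delta$, and taking the supremum over $\delta\in \Delta$ gives $\vee_{\delta\in\Delta}\fs_\delta\leq \fr_\Delta$.

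For the reverse inequality, set $p=\vee_{\delta\in \Delta}\fs_\delta$. Since projections are ordered by reverse inclusion of the right ideals they generate, it suffices to show $(I-p)\fT^{**}\subset \AC(\Delta)^\perp$, as this will give $I-p\leq I-\fr_\Delta$ thanks to Lemma \ref{L:ACideal}. Let $\phi\in \AC(\Delta)$ and choose $\delta\in \Delta$ with $\phi\ll\delta$. From $p\geq \fs_\delta$ we get $\delta(p)\geq \delta(\fs_\delta)=1$, and hence
\[
\delta\bigl((I-p)^*(I-p)\bigr)=\delta(I-p)=0.
\]
Applying Lemma \ref{L:statevanish} with $\xi=I-p$ and $\psi=\delta$, we conclude that $\widehat{\phi}\bigl((I-p)\eta\bigr)=0$ for every $\eta\in \fT^{**}$, which is precisely the required annihilation.

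I do not anticipate any real obstacle. The only points that require some care are the bookkeeping between the functional $\delta$ on $\fT$ and its weak-$*$ continuous extension $\widehat{\delta}$ on $\fT^{**}$, and explicitly invoking the minimality of the support projection, since it is used in each half of the argument.
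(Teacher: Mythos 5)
Your proof is correct and follows essentially the same route as the paper: the inclusion $\vee_{\delta}\fs_\delta\leq \fr_\Delta$ via $\delta\in\AC(\Delta)$ and Lemma \ref{L:ACideal}, and the reverse inclusion by showing that the complementary ideal of $\vee_\delta\fs_\delta$ annihilates $\AC(\Delta)$ using Lemma \ref{L:statevanish}. The only cosmetic difference is that you work directly with the right ideal $(I-p)\fT^{**}$ where the paper takes $\xi\in\fT^{**}(I-p)$ and passes to adjoints; the substance is identical.
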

\begin{proof}
Let $\delta\in \Delta$. Since $\delta$ is a state in $\AC(\Delta)$, it follows from Lemma \ref{L:ACideal} that $\widehat\delta(I-\fr_\Delta)=0$, whence $\fs_\delta\leq \fr_\Delta$. We conclude that $\vee_{\delta\in \Delta}\fs_{\delta}\leq \fr_\Delta$. Conversely, let $\xi\in \fT^{**}\left( I-\vee_{\delta\in \Delta}\fs_\delta\right).$ Then, we find $\widehat\delta(\xi^*\xi)=0$ for every $\delta\in \Delta$, whence $|\alpha|(\xi^*\xi)=0$  for every $\alpha\in \AC(\Delta)$. Thus,  $\widehat\alpha(\xi^*)=0$ for each $\alpha\in \AC(\Delta)$ by Lemma \ref{L:statevanish}, which in turn means that $\xi^*\in \AC(\Delta)^\perp$. By Lemma \ref{L:ACideal}, we conclude that $\xi^*=(I-\fr_\Delta)\xi^*$, and so $\xi=\xi(I-\fr_\Delta)$. This shows that
\[
\fT^{**}\left( I-\vee_{\delta\in \Delta}\fs_\delta\right)\subset \fT^{**}( I-\fr_\Delta)
\]
and thus $\fr_\Delta\leq \vee_{\delta\in \Delta}\fs_\delta$.
\end{proof}

We can now show how to detect membership in $\AC(\Delta)$ or $\SG(\Delta)$.

\begin{lemma}\label{L:split}
Let $\phi:\fT\to \bC$ be a bounded linear functional. Then, the following statements hold.
\begin{enumerate}[{\rm (i)}]
%
\item We have that $\phi\in \AC(\Delta)$ if and only if 
\[
\phi(t)=\widehat\phi(\fr_\Delta t), \quad t\in \fT.
\]

 \item We have that $\phi\in \SG(\Delta)$ if and only if 
\[
\phi(t)=\widehat\phi((I-\fr_\Delta) t), \quad t\in \fT.
\]

\item We have that $\SG(\Delta)=(\fr_\Delta \fT^{**})_\perp$. 
\end{enumerate}

\end{lemma}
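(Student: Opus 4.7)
The plan is to reduce all three statements to membership in an annihilator of a one-sided ideal generated by $\fr_\Delta$, using Lemmas \ref{L:ACideal} and \ref{L:rproj} together with the polar identities $\widehat\phi=|\phi|(\cdot v_\phi)$ and $|\phi|=\widehat\phi(\cdot v_\phi^*)$.

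First I would handle (i) directly from Lemma \ref{L:ACideal}, which asserts $\AC(\Delta)=((I-\fr_\Delta)\fT^{**})_\perp$. Unfolding the preannihilator, $\phi\in\AC(\Delta)$ is equivalent to $\widehat\phi((I-\fr_\Delta)\xi)=0$ for every $\xi\in\fT^{**}$, i.e. $\widehat\phi(\xi)=\widehat\phi(\fr_\Delta\xi)$ for every $\xi\in\fT^{**}$. The forward direction in the statement is then obtained by restricting to $\xi=t\in\fT$. For the converse, I would observe that the map $\xi\mapsto\widehat\phi(\fr_\Delta\xi)$ is weak-$*$ continuous on $\fT^{**}$ (left multiplication by a fixed element of $\fT^{**}$ is weak-$*$ continuous), so if it agrees with $\widehat\phi$ on the weak-$*$ dense subset $\fT$, the uniqueness of weak-$*$ continuous extensions forces equality on all of $\fT^{**}$.

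Next I would establish (iii) by showing that $\phi\in\SG(\Delta)$ is equivalent to $\widehat\phi$ vanishing on $\fr_\Delta\fT^{**}$. By Lemma \ref{L:rproj}, $\fs_\phi\fs_\delta=0$ for every $\delta\in\Delta$ if and only if $\fs_\phi\fr_\Delta=0$. For the forward implication, the support property of $|\phi|$ (which gives $|\phi|(\xi)=|\phi|(\fs_\phi\xi)$ via Schwarz applied to $|\phi|((I-\fs_\phi)^2)=0$) together with $\widehat\phi=|\phi|(\cdot v_\phi)$ yields
\[
\widehat\phi(\fr_\Delta\xi)=|\phi|(\fr_\Delta\xi v_\phi)=|\phi|(\fs_\phi\fr_\Delta\xi v_\phi)=0
\]
for all $\xi\in\fT^{**}$. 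For the converse, the companion identity $|\phi|=\widehat\phi(\cdot v_\phi^*)$ gives $|\phi|(\fr_\Delta\xi)=\widehat\phi(\fr_\Delta\xi v_\phi^*)=0$ for all $\xi$; setting $\xi=I$ produces $|\phi|(\fr_\Delta)=|\phi|(\fs_\phi\fr_\Delta\fs_\phi)=0$. Since $\fs_\phi\fr_\Delta\fs_\phi=(\fr_\Delta\fs_\phi)^*(\fr_\Delta\fs_\phi)$, the support property of $|\phi|$ then forces $\fr_\Delta\fs_\phi\cdot\fs_\phi=\fr_\Delta\fs_\phi=0$, i.e. $\fs_\phi\fr_\Delta=0$.

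Finally, (ii) follows by combining (i) and (iii) with the decomposition $\phi(t)=\widehat\phi(\fr_\Delta t)+\widehat\phi((I-\fr_\Delta)t)$: the identity $\phi(t)=\widehat\phi((I-\fr_\Delta)t)$ for every $t\in\fT$ holds precisely when $\widehat\phi(\fr_\Delta t)=0$ for every $t\in\fT$, which, by the same weak-$*$ continuity argument used in (i), is equivalent to $\widehat\phi(\fr_\Delta\xi)=0$ for every $\xi\in\fT^{**}$, and hence by (iii) to $\phi\in\SG(\Delta)$. The main obstacle is the polar-decomposition manipulation at the heart of (iii): one has to juggle the two formulas in (\ref{Eq:absval}) together with the two-sided support identities for $|\phi|$ to convert cleanly between the functional-analytic condition $\widehat\phi|_{\fr_\Delta\fT^{**}}=0$ and the purely algebraic condition $\fs_\phi\fr_\Delta=0$.
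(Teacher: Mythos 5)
Your proof is correct and follows essentially the same route as the paper: part (i) is the same unpacking of Lemma \ref{L:ACideal}, and the singular part rests on Lemma \ref{L:rproj} together with the polar identities and the support projection of $|\phi|$. The only difference is cosmetic ordering --- you prove (iii) directly and deduce (ii), while the paper proves (ii) first (invoking Lemma \ref{L:statevanish} where you run the Cauchy--Schwarz/support argument by hand) and obtains (iii) as an immediate consequence.
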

\begin{proof}
(i) Let $\theta:\fT\to \bC$ be defined as
\[
\theta(t)=\widehat\phi(\fr_\Delta t), \quad t\in \fT.
\]
If $\phi\in \AC(\Delta)$, then $\phi\in ((I-\fr_\Delta)\fT^{**})_\perp$ by Lemma \ref{L:ACideal}, whence
\[
\widehat{\phi}((I-\fr_\Delta)\xi)=0, \quad \xi\in \fT^{**}.
\]
This clearly implies that $\theta=\phi$. Conversely, if $\theta=\phi$, then
\[
\widehat{\phi}((I-\fr_\Delta)\xi)=\widehat\theta((I-\fr_\Delta)\xi)=0, \quad \xi\in \fT^{**}.
\]
We conclude that $\phi\in ((I-\fr_\Delta)\fT^{**})_\perp$, whence $\phi\in \AC(\Delta)$ by Lemma \ref{L:ACideal}.

%
%

(ii) Let $\theta\in \fT^*$ be defined as
\[
\theta(t)=\widehat\phi((I-\fr_\Delta) t), \quad t\in \fT.
\]
If $\phi\in \SG(\Delta)$, then $\fs_\phi\fs_\delta=0$ for every $\delta\in \Delta$, and thus $\fs_\phi\fr_\Delta=0$ by Lemma \ref{L:rproj}. We conclude that $|\phi|(\fr_\Delta)=0$, whence
\[
\widehat{\phi}(\fr_\Delta t)=0, \quad t\in \fT
\]
by Lemma \ref{L:statevanish}. We infer that $\phi=\theta$. Conversely, assume $\theta=\phi$ and choose a partial isometry $v\in \fT^{**}$ such that $|\phi|=\widehat\phi(\cdot v)$. We find
\[
|\phi|(\xi)=\widehat\phi(\xi v)=\widehat\theta(\xi v)=\widehat\phi((I-\fr_\Delta)\xi v), \quad \xi\in \fT^{**}.
\]
This shows that $|\phi|(\fr_\Delta)=0$, whence $\fs_\phi \fr_\Delta=0$. Invoking Lemma \ref{L:rproj}, we see that $\fs_\phi\fs_\delta=0$ for every $\delta\in \Delta$.  Therefore $\phi\in \SG(\Delta)$.

(iii) This is an immediate consequence of (ii).
\end{proof}

We now arrive at the main result of this section, which is a non-commutative version of what is sometimes referred to as the Riesz decomposition theorem (see \cite{konig1969}.)

\begin{theorem}\label{T:ncRiesz}
Let $\fT$ be a unital $\rC^*$-algebra and let $\Delta$ be a norm-closed convex subset of the state space of $\fT$. Put $\fr_\Delta=\vee_{\delta\in \Delta}\fs_\delta$. Let
\[
\AC(\Delta)=\{\phi\in \fT^*:\phi\ll\delta \text{ for some } \delta\in \Delta\}
\]
and
\[
\SG(\Delta)=\{\phi\in \fT^*:\fs_\phi\fs_\delta=0 \text { for every } \delta\in \Delta\}.
\]
Then, we have
\[
\AC(\Delta)=((I-\fr_\Delta)\fT^{**})_\perp
\]
\[
\SG(\Delta)=(\fr_\Delta \fT^{**})_\perp
\]
and
\[
\fT^*=\AC(\Delta)+\SG(\Delta).
\]
\end{theorem}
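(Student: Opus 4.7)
The plan is to establish the decomposition $\fT^* = \AC(\Delta) + \SG(\Delta)$ by explicit construction, after first noting that the two displayed identities involving $\AC(\Delta)$ and $\SG(\Delta)$ are already in hand. Indeed, combining Lemma \ref{L:ACideal} with Lemma \ref{L:rproj}'s identification of the projection $\fr_\Delta = \vee_{\delta \in \Delta} \fs_\delta$ yields $\AC(\Delta) = ((I-\fr_\Delta)\fT^{**})_\perp$, while $\SG(\Delta) = (\fr_\Delta \fT^{**})_\perp$ is the content of Lemma \ref{L:split}(iii). Note also that the sum is automatically direct: any $\phi \in \AC(\Delta) \cap \SG(\Delta)$ must, by Lemma \ref{L:split}, satisfy both $\phi(t) = \widehat\phi(\fr_\Delta t)$ and $\phi(t) = \widehat\phi((I-\fr_\Delta)t)$ for all $t \in \fT$; adding these and using $\fr_\Delta + (I-\fr_\Delta) = I$ gives $2\phi = \phi$, so $\phi = 0$.

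For existence of the decomposition, given an arbitrary $\phi \in \fT^*$, I would define
\[
\phi_{ac}(t) := \widehat\phi(\fr_\Delta t) \qand \phi_s(t) := \widehat\phi((I-\fr_\Delta)t), \quad t \in \fT.
\]
Both are bounded linear functionals on $\fT$, and since $I = \fr_\Delta + (I-\fr_\Delta)$ we have $\phi = \phi_{ac} + \phi_s$ immediately. The task then reduces to verifying that $\phi_{ac} \in \AC(\Delta)$ and $\phi_s \in \SG(\Delta)$, which is done by applying Lemma \ref{L:split} parts (i) and (ii) respectively.

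The key technical point is to identify the weak-$*$ continuous extensions $\widehat{\phi_{ac}}$ and $\widehat{\phi_s}$ on $\fT^{**}$. Since left multiplication by $\fr_\Delta$ is weak-$*$ continuous on the von Neumann algebra $\fT^{**}$, the map $\xi \mapsto \widehat\phi(\fr_\Delta \xi)$ is weak-$*$ continuous on $\fT^{**}$ and agrees with $\phi_{ac}$ on the weak-$*$ dense subalgebra $\fT$. By uniqueness of the weak-$*$ continuous extension, it must coincide with $\widehat{\phi_{ac}}$. Hence for every $t \in \fT$,
\[
\widehat{\phi_{ac}}(\fr_\Delta t) = \widehat\phi(\fr_\Delta^2 t) = \widehat\phi(\fr_\Delta t) = \phi_{ac}(t),
\]
and Lemma \ref{L:split}(i) then delivers $\phi_{ac} \in \AC(\Delta)$. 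A parallel computation with $\widehat{\phi_s}(\xi) = \widehat\phi((I-\fr_\Delta)\xi)$ combined with Lemma \ref{L:split}(ii) yields $\phi_s \in \SG(\Delta)$. The only real obstacle is the bookkeeping involved in carefully distinguishing between a functional on $\fT$ and its weak-$*$ continuous extension to $\fT^{**}$, and ensuring the natural formulas for $\phi_{ac}$ and $\phi_s$ are intrinsically compatible with the bidual structure; once this is accomplished the decomposition is essentially a direct consequence of the groundwork already laid in Lemmas \ref{L:ACideal}--\ref{L:split}.
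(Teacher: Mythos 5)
Your proposal is correct and follows essentially the same route as the paper, which likewise obtains the decomposition by writing $\phi=\widehat\phi(\fr_\Delta\,\cdot)+\widehat\phi((I-\fr_\Delta)\,\cdot)$ and invoking Lemmas \ref{L:ACideal} and \ref{L:split}. The extra care you take in identifying $\widehat{\phi_{ac}}$ via separate weak-$*$ continuity of multiplication is exactly the detail the paper leaves implicit.
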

\begin{proof}
This follows readily from Lemmas \ref{L:ACideal} and \ref{L:split}, along with the decomposition
\[
\phi=\widehat \phi(\fr_\Delta \cdot)+\widehat\phi((I-\fr_\Delta)\cdot)
\]
for every $\phi\in \fT^{*}$.
\end{proof}
 

\subsection{The Rainwater refinement}
When the set $\Delta$ in the previous development is in fact closed in the weak-$*$ topology of $\fT^*$, more can be said about the decomposition from Theorem \ref{T:ncRiesz}, and we get a statement closer to the classical Glicksberg--K\"onig--Seever theorem \cite{glicksberg1967},\cite{konig1969} (see also \cite[Theorem 9.4.4]{rudin2008}). The refinement we seek identifies a projection with good topological properties on which
the singular part of the Riesz decomposition of a functional must be concentrated. To make this more precise, we must recall Akemann's non-commutative topology for projections in the bidual of a $\rC^*$-algebra \cite{akemann1969},\cite{akemann1970left}.

Let $\fT$ be a $\rC^*$-algebra and let $p\in \fT^{**}$ be a projection. We say that $p$ is \emph{open} if there is a net  of positive contractions in $\fT$ that increases to $p$ in the weak-$*$ topology of $\fT^{**}$. Further, we say that a projection $q\in \fT^{**}$ is \emph{closed} if $I-q$ is open in the previous sense. If $t\in \fT$ is a self-adjoint element and $\chi$ is the characteristic function of some interval $J\subset \bR$, then it is easily verified that $\chi(t)$ is a projection in $\fT^{**}$ which is open (respectively, closed), whenever $J$ is open (respectively, closed) (see \cite[Proposition II.3]{akemann1969} for an argument). In the special case where $\fT=\rC(X)$ for some compact Hausdorff space $X$,  it can be verified that a projection $p\in \fT^{**}$ is open (respectively, closed) if and only if there exists an open (respectively, closed) subset $F\subset X$ with $p=\chi_F$.

In preparation for proving the desired refinement of our non-commutative Riesz decomposition, we require the following elementary fact.

\begin{lemma}\label{L:approx}
Let $\fT$ be a unital $\rC^*$-algebra and let $\Delta$ be a set of states on $\fT$. Let $\phi:\fT\to \bC$ be a linear functional with $\|\phi\|=1$ and let $0<\eps<1$. Assume that there is a positive contraction $t\in \fT$  such that $|\phi|(t)\geq 1-\eps$ and $\delta(t)<\eps$ for every $\delta\in \Delta$. Then, there is a closed projection $p\in \fT^{**}$ with the property that $|\phi|(p)\geq 1-\sqrt{\eps}$ and $\widehat\delta(p)<\eps/(1-\sqrt{\eps})$ for every $\delta\in \Delta$. 
\end{lemma}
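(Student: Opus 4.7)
The plan is to build $p$ as a spectral projection of $t$ using Borel functional calculus. Specifically, I would take $p = \chi_{[1-\sqrt{\eps},1]}(t) \in \fT^{**}$. Since $t$ is a self-adjoint element of $\fT$ and $[1-\sqrt{\eps},1]$ is closed in $\bR$, the cited fact from \cite[Proposition II.3]{akemann1969} (recalled just before the lemma) guarantees that $p$ is a closed projection.

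To verify the lower bound for $|\phi|(p)$, I would exploit the fact that $t$ and $p$ commute in $\fT^{**}$, so one can split $t = tp + t(I-p)$ and compare each summand with $p$ and $I-p$ via functional calculus. Because $t$ is a contraction, $tp \leq p$, and because $t(I-p) = t \chi_{[0,1-\sqrt{\eps})}(t)$ has norm at most $1-\sqrt{\eps}$, we obtain the operator inequality
\[
t \leq p + (1-\sqrt{\eps})(I-p).
\]
Applying the positive functional $|\phi|$ (which satisfies $|\phi|(I)=\||\phi|\|=\|\phi\|=1$) and rearranging gives
\[
1-\eps \leq |\phi|(t) \leq (1-\sqrt{\eps}) + \sqrt{\eps}\,|\phi|(p),
\]
from which $|\phi|(p) \geq 1-\sqrt{\eps}$ follows immediately.

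For the upper bound on $\widehat\delta(p)$, I would argue in the opposite direction: on the spectral subspace of $t$ corresponding to $[1-\sqrt{\eps},1]$, $t \geq (1-\sqrt{\eps})I$. Combined with the positivity of $t(I-p)$, this yields $t \geq (1-\sqrt{\eps})p$ in $\fT^{**}$. Hence, for each $\delta \in \Delta$,
\[
\eps > \delta(t) = \widehat\delta(t) \geq (1-\sqrt{\eps})\,\widehat\delta(p),
\]
which rearranges to $\widehat\delta(p) < \eps/(1-\sqrt{\eps})$, as desired.

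No step here looks like a serious obstacle: the only subtlety is to correctly identify the threshold $\alpha = 1-\sqrt{\eps}$, which is forced by optimizing simultaneously the two operator inequalities $t \leq p + \alpha(I-p)$ and $t \geq \alpha p$ against the target bounds. Once $\alpha$ is chosen, everything reduces to Borel functional calculus applied to a positive contraction and the positivity of the functionals involved.
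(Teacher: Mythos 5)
Your proposal is correct and follows essentially the same route as the paper: the same spectral projection $p=\chi_{[1-\sqrt{\eps},1]}(t)$, and the same two operator inequalities $(1-\sqrt{\eps})p\leq t\leq(1-\sqrt{\eps})I+\sqrt{\eps}\,p$ (your $t\leq p+(1-\sqrt{\eps})(I-p)$ is the second one rewritten), evaluated against $|\phi|$ and $\widehat\delta$ exactly as in the paper. No gaps.
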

\begin{proof}
Let $\chi:[0,1]\to [0,1]$ be the characteristic function of the interval $[1-\sqrt{\eps},1]$ and let $p=\chi(t)$. Then, $p$ is a closed projection in $\fT^{**}$ satisfying
\[
(1-\sqrt{\eps})p\leq t\leq (1-\sqrt{\eps})I+\sqrt{\eps}p.
\]
We obtain
\[
\widehat\delta(p)\leq \frac{\delta(t)}{1-\sqrt{\eps}}<\frac{\eps}{1-\sqrt{\eps}}
\]
for every $\delta\in \Delta$. Moreover, using that $|\phi|(I)=\|\phi\|=1$ we find
\begin{align*}
|\phi|(p)&\geq \frac{1}{\sqrt{\eps}}|\phi|(t-(1-\sqrt{\eps})I)\geq \frac{\sqrt{\eps}-\eps}{\sqrt{\eps}}=1-\sqrt{\eps}.
\end{align*}
\end{proof}

We now obtain our desired result, by adapting an idea of Rainwater \cite{rainwater1969} (see also \cite[Lemma 9.4.3]{rudin2008}). Roughly speaking, it says that a singular functional must be concentrated on a projection of type $F_\sigma$ that is negligible for $\Delta$.

\begin{theorem}\label{T:rainwater}
Let $\fT$ be a unital $\rC^*$-algebra and let $\Delta$ be a weak-$*$ closed convex subset of the state space of $\fT$. Let $\phi\in \SG(\Delta)$. Then, there is a countable collection $\F\subset \fT^{**}$ of closed projections  such that if we put $p=\vee\{q:q\in \F\}$, then $|\phi|(p)=|\phi|(I) $ and $\widehat\delta(p)=0$ for every $\delta\in \Delta$.
\end{theorem}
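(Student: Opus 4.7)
My plan is to exhibit a countable family $\F$ of closed projections in $\fT^{**}$ such that $p=\bigvee\F$ satisfies the sandwich $\fs_\phi\le p\le I-\fr_\Delta$; by the results of Section~\ref{S:GKS}, this sandwich is equivalent to the stated conditions $|\phi|(p)=|\phi|(I)$ and $\widehat\delta(p)=0$ for every $\delta\in\Delta$. After normalizing $\|\phi\|=1$, the singularity hypothesis $\fs_\phi\fs_\delta=0$ yields $\fs_\phi\le I-\fr_\Delta$, so such a $p$ exists in principle. The challenge is realizing it as the supremum of countably many closed projections.

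The first main step is to verify the hypothesis of Lemma~\ref{L:approx} uniformly over $\Delta$: for every $\eps>0$, I would produce a positive contraction $t_\eps\in\fT$ with $|\phi|(t_\eps)>1-\eps$ and $\delta(t_\eps)<\eps$ for all $\delta\in\Delta$. The tool is Sion's minimax theorem applied to $f(\delta,t)=|\phi|(t)-\delta(t)$ on the product of the weak-$*$ compact convex set $\Delta$ with the convex set $Y=\{t\in\fT:0\le t\le I\}$. Both variables enter $f$ affinely, $f(\cdot,t)$ is weak-$*$ continuous on $\Delta$ for each $t\in\fT$, and $f(\delta,\cdot)$ is norm-continuous on $Y$, so Sion's hypotheses are satisfied. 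For each fixed $\delta\in\Delta$, mutual singularity of $|\phi|$ and $\delta$ identifies the Jordan decomposition of $|\phi|-\delta$ as $|\phi|-\delta$ itself, and combined with Kaplansky's density theorem this gives $\sup_{t\in Y}f(\delta,t)=1$. Sion's theorem then interchanges to produce $\sup_{t\in Y}\inf_{\delta\in\Delta}f(\delta,t)=1$, from which $t_\eps$ is extracted.

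Applying Lemma~\ref{L:approx} to $t_{4^{-n}}$, I extract for each $n\ge 1$ a closed projection $p_n\in\fT^{**}$ satisfying $|\phi|(p_n)\ge 1-2^{-n}$ and $\widehat\delta(p_n)\le 2^{-n}/(1-2^{-n})$ uniformly in $\delta\in\Delta$. Define
\[
r_n=\bigwedge_{k\ge n}p_k,
\]
which is a closed projection (arbitrary intersections of closed projections remain closed in Akemann's framework) and is increasing in $n$. Since $r_n\le p_k$ for every $k\ge n$, one has $\widehat\delta(r_n)\le\inf_{k\ge n}\widehat\delta(p_k)=0$. Setting $\F=\{r_n:n\ge 1\}$ and $p=\bigvee\F$, normality of $\widehat\delta$ applied to the increasing net $r_n\uparrow p$ yields $\widehat\delta(p)=0$ for every $\delta\in\Delta$.

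The remaining and most delicate point is verifying $|\phi|(p)=1$. By normality, $|\phi|(p)=\lim_n|\phi|(r_n)$, and the required convergence $|\phi|(I-r_n)\to 0$ would follow immediately from subadditivity $|\phi|\bigl(\bigvee_k q_k\bigr)\le\sum|\phi|(q_k)$ applied to the complements $q_k=I-p_k$; however, this inequality is known to fail for joins of non-commuting projections, and this is the principal obstacle of the proof. I anticipate two routes around it: either refine the construction so that the $p_n$ form a decreasing sequence (for instance by iterating the minimax inside successively smaller corners of $\fT^{**}$), in which case $r_n$ collapses to $p_n$ and $|\phi|(p)=\lim|\phi|(p_n)=1$ follows directly from normality of $|\phi|$; or pass to the GNS representation $(\pi,\H,\xi)$ of $|\phi|$, where the rapid decay $\|\pi(I-p_k)\xi\|^2\le 2^{-k}$ furnishes a summable family of error vectors that, via careful Hilbert-space analysis of the subspaces $\ran(\pi(p_k))$, would allow one to bound $\|\xi-\pi(r_n)\xi\|$ and conclude.
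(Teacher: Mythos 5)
Your proposal tracks the paper's own argument almost step for step through its first two stages. Your minimax function $f(\delta,t)=|\phi|(t)-\delta(t)$ is, after the normalization $\|\phi\|=1$, exactly the paper's $1-\Phi(t,\delta)$; the computation $\sup_{t}f(\delta,t)=1$ via mutual singularity of $|\phi|$ and $\delta$ together with Kaplansky density is the paper's computation; the passage to closed projections via Lemma \ref{L:approx} with $\eps=4^{-n}$ is identical; and your $p=\bigvee_{n}\bigwedge_{k\ge n}p_k$ coincides with the paper's $p=\vee_{m}(I-q_m)$, where $q_m=\vee_{n>m}(I-r_n)$. Your verification that $\widehat\delta(p)=0$ (via $r_n\le p_k$ for all $k\ge n$ and normality of $\widehat\delta$) is correct and is essentially the paper's argument phrased through support projections. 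The reduction of the two conclusions to the sandwich $\fs_\phi\le p\le I-\fr_\Delta$ is also correct.

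The gap is the one you name yourself: you do not prove $|\phi|(p)=|\phi|(I)$. The paper closes this step with the estimate $|\phi|(q_m)\le\sum_{n>m}|\phi|(I-r_n)$, that is, by countable subadditivity of the normal functional $|\phi|$ over the join of the open projections $I-r_n$. Your worry about this kind of inequality is legitimate as a general matter: for arbitrary non-commuting projections $e,f$ and a normal state $\omega$ one can have $\omega(e\vee f)=1$ while $\omega(e)+\omega(f)$ is as small as one likes, so no inequality of this shape holds for unrestricted projections. You have therefore correctly isolated the only delicate point of the proof, but isolating it is not resolving it, and neither of your two suggested repairs is carried out. Making the $p_n$ decreasing would indeed finish instantly (normality of $|\phi|$ along a decreasing sequence gives $|\phi|(\wedge_n p_n)=1$), but you give no construction achieving monotonicity. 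The GNS route founders on the same phenomenon you are trying to avoid: a unit vector can lie close to each of a family of closed subspaces while being far from their intersection, so summability of $\|\pi(I-p_k)\xi\|^2$ does not by itself control $\|\xi-\pi(r_n)\xi\|$. As it stands the proposal establishes only half of the conclusion; to complete it you must either justify the subadditivity estimate for the particular projections produced by Lemma \ref{L:approx} (as the paper asserts) or replace the final step with a genuinely different construction of $p$.
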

\begin{proof}
We may assume that $\|\phi\|=|\phi|(I)=1$. Consider the convex subset $G=\{t\in \fT:0\leq t\leq I\}$. Define a map $\Phi:G\times \Delta\to \bC$ as
\[
\Phi(t,\delta)=|\phi|(I-t)+\delta(t), \quad t\in G, \delta\in \Delta.
\]
Let $\eps>0$ and fix $\delta \in \Delta$.  By the Kaplansky density theorem, there is $t\in G$ with the property that
\[
| |\phi|(t)-|\phi|(\fs_\phi)|<\eps \qand |\delta(t)-\widehat\delta(\fs_\phi)|<\eps.
\]
Observe that $\fs_\phi\fs_\delta=0$ by assumption, so we obtain
\[
||\phi|(t)-1|<\eps \qand |\delta(t)|<\eps.
\]
We infer that $\Phi(t,\delta)<2\eps.$ Since $\eps>0$ and $\delta\in \Delta$ were arbitrary, this shows that
\[
\sup_{\delta\in \Delta}\inf_{t\in G}\Phi(t,\delta)=0.
\]
On the other hand, it follows from the minimax theorem \cite[Theorem 9.4.2]{rudin2008} that
\[
\sup_{\delta\in \Delta}\inf_{t\in G}\Phi(t,\delta)=\inf_{t\in G}\sup_{\delta\in \Delta}\Phi(t,\delta)
\]
so in fact we find
\[
\inf_{t\in G}\sup_{\delta\in \Delta}\Phi(t,\delta)=0.
\]
Given a positive integer $n$ we may thus find $t_{n}\in G$ such that
\[
\Phi(t_{n},\delta)<\frac{1}{4^{n}}, \quad \delta\in \Delta
\]
which is equivalent to
\[
|\phi|(I-t_{n})+\delta(t_{n})<\frac{1}{4^{n}}, \quad \delta\in \Delta.
\]
In particular, since $0\leq t_n\leq I$ we see that 
\[
|\phi|(t_{n})>1-\frac{1}{4^{n}} 
\]
and
\[
\delta(t_{n})<\frac{1}{4^{n}}, \quad \delta\in \Delta. 
\]
Apply now Lemma \ref{L:approx} to find a closed projection $r_{n}\in \fT^{**}$ with the property that
\[
|\phi|(r_{n})>1-\frac{1}{2^{n}} \qand \widehat\delta(r_{n})<\frac{1}{4^{n}}\frac{1}{1-1/2^n}\leq \frac{1}{2^n},
\]
whence 
\[ |\phi|(I-r_n)<\frac{1}{2^n} \qand \widehat\delta(I-r_n)\geq 1-\frac{1}{2^n} \]
 for every $\delta\in \Delta$. For each $m\in \bN$, we now put $q_m=\vee_{n=m+1}^\infty (I-r_n)$, which is open. We also let $p=\vee_{m=1}^\infty(I- q_m)$. 
For each $m\in \bN$, we find
\[
|\phi|(I-p)\leq |\phi|(q_m)\leq \sum_{n=m+1}^\infty |\phi|(I-r_{n})<\sum_{n=m+1}^\infty \frac{1}{2^n}=\frac{1}{2^m}
\]
whence $|\phi|(p)=1$. Finally,  let $\delta\in \Delta$. For each $m\in \bN$  and each $n\geq m+1$ we have
\[
\widehat\delta(q_m)\geq \widehat\delta(I-r_n)\geq 1-\frac{1}{2^n}
\]
so that  $\widehat\delta(q_m)=1$ and $\fs_\delta\leq q_m$. We infer that $I-\fs_\delta \geq p$, so that $\widehat\delta(p)=0$.
\end{proof}


\section{Non-commutative Henkin theory}\label{S:ncHenkin}

\subsection{Henkin functionals}\label{SS:Henkinfunct}
For much of the remainder of the paper, we will be concerned with a special class of functionals, which we now define. 

Let $\fW$ be a von Neumann algebra and let $\X\subset \fW$ be a norm-closed subspace.
A  bounded linear functional $\phi:\X\to \bC$ is said to be \emph{Henkin relative to $\fW$} if  $\lim_i \phi(a_i)=0$ for every bounded net $(a_i)$ in $\X$ converging to $0$ in the weak-$*$ topology of $\fW$.  We let  $\Hen_{\fW}(\X)$ denote the subspace of $\X^*$ consisting of those functionals that are Henkin relative to $\fW$. It is readily verified that this subspace is norm closed.

Next, let $\E_\fW(\X)$ denote the set of those bounded linear functionals on $\X$ which extend weak-$*$ continuously to $\fW$. It is clear that $\E_\fW(\X)\subset \Hen_\fW(\X)$. Equality holds in some instances as we show in the next result, which is directly adapted from \cite[Lemma 1.1]{eschmeier1997}.

\begin{proposition}\label{P:Esc}
Let $\fW$ be a von Neumann algebra and let $\X\subset \fW$ be a norm-closed subspace. Let $\fL$ denote the closure of $\X$ in the weak-$*$ topology of $\fW$. 
Assume that the closed unit ball of $\X$ is weak-$*$ dense in the closed unit ball of $\fL$.
Then,
$
\E_\fW(\X)=\Hen_{\fW}(\X).
$
\end{proposition}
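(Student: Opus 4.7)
The inclusion $\E_\fW(\X)\subset\Hen_\fW(\X)$ is already noted, so I focus on the reverse containment. Fix $\phi\in\Hen_\fW(\X)$. The plan is to produce a weak-$*$ continuous extension of $\phi$ in two stages: first to the weak-$*$ closed subspace $\fL$, and then to all of $\fW$ via a standard lifting.

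For the extension to $\fL$, I would use the density hypothesis to pick, for each $f\in\fL$, a bounded net $(a_i)$ in $\X$ with $a_i\to f$ weak-$*$ (the bound coming from a suitable rescaling of the fact that $\B_\X$ is weak-$*$ dense in $\B_\fL$). The key well-definedness step is that the scalar net $(\phi(a_i))$ converges to a limit that depends only on $f$. This is where the Henkin hypothesis enters: given two such nets $(a_i)_{i\in I}$ and $(b_j)_{j\in J}$, the bounded product net $c_{(i,j)}=a_i-b_j$ in $\X$ converges weak-$*$ to $0$ (verified against any predual functional), so the Henkin property forces $\phi(a_i)-\phi(b_j)\to 0$. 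Specializing $(b_j)=(a_i)$ yields Cauchyness of $(\phi(a_i))$ in $\bC$, and the general case then yields independence of the approximating net. I define $\tilde\phi(f):=\lim_i\phi(a_i)$; linearity is obtained by another product-net argument (sums of nets approximating $f$ and $g$ approximate $f+g$), and the bound $|\tilde\phi(f)|\leq\|\phi\|\cdot\|f\|$ is direct.

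The main obstacle is the weak-$*$ continuity of $\tilde\phi$ on $\fL$. By the Krein--Smulian theorem applied to the dual space $\fL$ (which is a dual Banach space because it is weak-$*$ closed in $\fW$), it suffices to prove continuity on the weak-$*$ compact ball $\B_\fL$. For this I would use the closed graph characterization of continuity for maps into a compact Hausdorff space: inside the compact product $\B_\fL\times\overline{D}_{\|\phi\|}$, the graph of $\tilde\phi|_{\B_\fL}$ coincides with the closure of the graph of $\phi|_{\B_\X}$. The well-definedness step shows this closure is single-valued over each $f\in\B_\fL$, while density of $\B_\X$ in $\B_\fL$ together with compactness of $\overline{D}_{\|\phi\|}$ guarantees every fibre is nonempty; hence the closure really is the graph of $\tilde\phi|_{\B_\fL}$, and the latter is therefore continuous.

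Finally, because $\fL$ is weak-$*$ closed in $\fW$, its predual is the quotient $\fW_*/\fL_\perp$, so the weak-$*$ continuous functional $\tilde\phi$ on $\fL$ lifts to some $\omega\in\fW_*$. Viewing $\omega$ as a weak-$*$ continuous functional on $\fW$ whose restriction to $\X$ is $\phi$ shows $\phi\in\E_\fW(\X)$, completing the proof. I anticipate that the delicate points will be the product-net argument for well-definedness (which uses the precise form of the Henkin hypothesis) and the closed-graph step for continuity on $\B_\fL$ (which needs compactness of both $\B_\fL$ and the target disc); everything else is bookkeeping.
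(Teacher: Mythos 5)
Your proposal is correct and follows essentially the same route as the paper: extend $\phi$ to $\fL$ via bounded approximating nets, use product nets and the Henkin hypothesis for well-definedness, reduce weak-$*$ continuity to the ball by Krein--Smulian, and lift through the predual quotient since $\fL$ is weak-$*$ closed. The only difference is cosmetic: where the paper verifies continuity on $\B_\fL$ by a cluster-point/diagonal-net argument, you package the same compactness argument as a closed-graph statement, which is a clean and valid variant.
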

\begin{proof}
Let $\phi\in\Hen_\fW(\X)$. Given $f\in \L$, by assumption we may find a bounded net $(a_i)$ in $\X$ converging to $f$ in the weak-$*$ topology of $\fW$. We claim that $(\phi(a_i))$ is a convergent net. Since $(a_i)$ is bounded, to see this it suffices to show that $(\phi(a_i))$ only has one cluster point. Let $(\phi(b_j))_{j\in J}$ and $(\phi(c_k))_{k\in K}$ be two convergent subnets of the net $(\phi(a_i))$. Let $\Lambda=J\times K$ be equipped with the partial order such that $(j,k)\prec (j',k')$ if and only if $j\prec j'$ and $k\prec k'$. Let $d_\lambda=b_j-c_k$ if $\lambda=(j,k)$. Then, we see that $(d_\lambda)$ is a bounded net in $\X$ converging to $0$ in the weak-$*$ topology of $\fW$. The fact that $\phi$ is Henkin relative to $\fW$ implies that $(\phi(d_\lambda))$ converges to $0$, whence
\[
\lim_{j\in J}\phi(b_j)=\lim_{k\in K}\phi(c_k).
\]
This proves the claim that $(\phi(a_i))$ is convergent. 
Next, assume that $(a'_\mu)_{\mu}$ is another bounded net in $\X$ converging to $f$ in the weak-$*$ topology of $\fW$. The net $(a_i-a'_\mu)_{(i,\mu)}$ converges to $0$ in the weak-$*$ topology of $\fW$, and hence, as above,
\[
\lim_{i\in I}\phi(a_i)=\lim_{\mu}\phi(a'_\mu).
\]
Thus, we obtain a well-defined linear map $\Phi:\L\to \bC$ such that
\[
\Phi(f)=\lim_i \phi(a_i)
\]
for every bounded net $(a_i)$ in $\X$ converging to $f$ in the weak-$*$ topology of $\fW$. It is clear that $\Phi$ extends $\phi$. Moreover, because the closed unit ball of $\X$ is weak-$*$ dense in that of $\L$ by assumption, we conclude that $\|\Phi\|\leq \|\phi\|$. 

We now show that  $\Phi$ is weak-$*$ continuous. By the Krein--Smulyan theorem, it suffices to show that the restriction of $\Phi$ to the unit ball of $\L$ is weak-$*$ continuous. To see this, fix $f\in \L$ with $\|f\|\leq1$ and let $(f_\alpha)$ be a net in $\L$ with $\|f_\alpha\|\leq1$ that converges to $f$ in the weak-$*$ topology of $\fW$. We must show that $\Phi(f)$ is the only cluster point of the net $(\Phi(f_\alpha))$. Let $z\in \bC$ be such a cluster point.  A standard argument shows that we can find a bounded net $(a_i)$ in $\X$ converging to $f$ in the weak-$*$ topology of $\fW$ and such that 
 $
 \lim_i\phi(a_i)=z.
 $
By definition of $\Phi$, we find
 \[
 \Phi(f)= \lim_i\phi(a_i)=z
 \]
 as desired.
Finally, because $\L$ is weak-$*$ closed, a standard duality argument shows that $\Phi$ can be extended to a weak-$*$ continuous functional  on $\fW$ so indeed $\phi\in \E_\fW(\X)$.
\end{proof}

%
%
%

\subsection{Analytic and coanalytic triples}\label{SS:anal}

For the  purpose of characterizing the functionals on a $\rC^*$-algebra that restrict to be Henkin on a subalgebra, we introduce some terminology. Let $\fW$ be a von Neumann algebra, let $\fT\subset \fW$ be a unital $\rC^*$-subalgebra and let $\A\subset \fT$ be a unital norm-closed subalgebra. Let $\Delta\subset \fT^*$ be the norm closure of the convex hull of $\{|\phi|:\phi\in \A^\perp,\|\phi\|=1\}$. 
We say that the triple $\A\subset \fT\subset \fW$ is \emph{analytic} if a bounded net $(b_i)$ in $\A$ converges to $0$ in the weak-$*$ topology of $\fW$ whenever 
\[
\lim_i \alpha(b_i)=0, \quad \alpha\in \AC(\Delta).
\]
We now identify a set of conditions that are sufficient to guarantee the analyticity of a triple.

\begin{proposition}\label{P:anal}
Let $\H$ be a Hilbert space.
Let $\fT\subset B(\H)$ be a unital $\rC^*$-subalgebra and let $\A\subset \fT$ be a unital norm-closed subalgebra. Assume that $\fT$ contains the ideal of compact operators on $\H$. Assume also that there is a dense set $\Gamma\subset \H$ consisting of non-cyclic vectors for $\{a^*:a\in \A\}$. 
Then, for every pair of vectors $h,k\in \H$, the functional
\[
t\mapsto \langle th,k\rangle, \quad t\in \fT
\]
lies in $\AC(\Delta)$. In particular, the triple $\A\subset \fT\subset B(\H)$ is analytic.
\end{proposition}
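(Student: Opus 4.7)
The plan is to first reduce the analyticity assertion to the preceding statement about vector functionals, and then construct the required absolutely continuous functionals using the non-cyclicity hypothesis. Once we know that every vector functional $\omega_{h,k} \colon t \mapsto \langle th, k\rangle$ on $\fT$ lies in $\AC(\Delta)$, the analyticity of the triple follows easily: on norm-bounded subsets of $B(\H)$, the weak-$*$ topology coincides with the weak operator topology, so a bounded net $(b_i)$ in $\A$ with $\alpha(b_i) \to 0$ for every $\alpha \in \AC(\Delta)$ in particular satisfies $\langle b_i h, k\rangle \to 0$ for every pair $(h,k)$, and therefore converges weak-$*$ to $0$ in $\fW = B(\H)$.

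The heart of the proof is the first claim. The strategy is to produce, for each pair $(h,k)$, a functional $\psi \in \A^\perp$ with $\omega_{h,k} \ll \psi$, and then use that $|\psi|/\|\psi\|$ lies in $\Delta$. I would handle the case $k \in \Gamma$ first. By hypothesis, the closed linear span of $\{a^*k : a \in \A\}$ is a proper subspace of $\H$, so we may pick a non-zero vector $\eta \in \H$ orthogonal to it. Define
\[
\psi(s) := \langle s\eta, k\rangle, \quad s \in \fT.
\]
For any $a \in \A$ we have $\psi(a) = \langle a\eta, k\rangle = \langle \eta, a^*k\rangle = 0$, so $\psi \in \A^\perp$. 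Because $\fT$ contains the compact operators on $\H$, the rank-one operator $t \in \fT$ given by $tv = \langle v, \eta\rangle h/\|\eta\|^2$ is available and satisfies $t\eta = h$; then for every $s \in \fT$,
\[
\psi(st) = \langle st\eta, k\rangle = \langle sh, k\rangle = \omega_{h,k}(s).
\]
Thus $\omega_{h,k} \in \{\psi(\cdot t) : t \in \fT\}$, and Theorem \ref{T:RN}(i) yields $\omega_{h,k} \ll \psi$. After discarding the trivial cases $h=0$ or $k=0$, the functional $\psi$ is non-zero, so $|\psi|/\|\psi\| \in \Delta$; since absolute continuity is insensitive to passage to the absolute value and to positive scalar multiples, we conclude $\omega_{h,k} \in \AC(\Delta)$.

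To remove the restriction $k \in \Gamma$, I would use density. For arbitrary $k \in \H$, pick $k_n \in \Gamma$ with $k_n \to k$; then $\|\omega_{h,k_n} - \omega_{h,k}\| \leq \|h\|\,\|k_n - k\| \to 0$, and norm-closedness of $\AC(\Delta)$ (Lemma \ref{L:ACclosed}) gives $\omega_{h,k} \in \AC(\Delta)$.

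The step requiring the most care is the asymmetry between the two slots of $\omega_{h,k}$: the hypothesis of non-cyclicity for the adjoint algebra naturally produces a $\psi \in \A^\perp$ carrying $k$ in the second slot and a free vector $\eta$ in the first, and it is precisely because $\fT$ contains the rank-one operators that we can then \emph{right-translate} $\eta$ onto $h$ so as to match the form $\psi(\cdot t)$ demanded by Theorem \ref{T:RN}(i). Absent the compacts in $\fT$---or had the hypothesis been non-cyclicity of $h$ rather than of $k$---this transport step, and hence the whole argument, would require rethinking.
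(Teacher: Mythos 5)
Your proposal is correct and follows essentially the same route as the paper: reduce to $k\in\Gamma$ via density of $\Gamma$ and norm-closedness of $\AC(\Delta)$, use non-cyclicity of $k$ for $\{a^*:a\in\A\}$ to build a vector functional in $\A^\perp$, and use the compact (rank-one) operators in $\fT$ to transport the free vector onto $h$. The only cosmetic difference is that the paper explicitly identifies $|\phi|$ as $\langle\,\cdot\, k,k\rangle$ via Lemma \ref{L:takunique} and then applies Lemma \ref{L:ACmodule}, whereas you invoke Theorem \ref{T:RN}(i) together with the (correct) observation that $\ll$ depends on the dominating functional only through its absolute value.
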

\begin{proof}
Fix unit vectors $h,k\in \H$ and define $\psi:\fT\to \bC$ as 
\[
\psi(t)=\langle th,k\rangle, \quad t\in \fT.
\]
Recall that $\AC(\Delta)$ is a norm-closed subspace by Lemma \ref{L:ACclosed}. Thus, since $\Gamma$ is dense in $\H$, it suffices to establish that $\psi\in \AC(\Delta)$ under the additional assumption that $k\in \Gamma$. Therefore, we may find  a unit vector $x\in \H$ with $\langle ax,k\rangle=0$ for every $a\in \A$. Define functionals $\phi$ and $\omega$ on $\fT$ as
\[
\phi(t)=\langle tx ,k \rangle \qand \omega(t)=\langle tk,k\rangle
\]
for every $ t\in \fT.$ By choice of $x$, we see that $\phi\in \A^\perp$. Because $\fT$ is assumed to contain the compact operators, we see that $\|\phi\|=1$, and that there is a partial isometry $v\in \fT$ such that $vk=x$ and $v^*v$ is the projection onto $\bC k$. Then, we see that $\phi=\omega(\cdot v)$, and it readily follows from Lemma \ref{L:takunique} that $\omega=|\phi|$. Therefore,  $\omega\in \Delta$. We may now choose another partial isometry $w\in \fT$ with the property that $wk=h$. Then, $\psi=\omega(\cdot w)$, so that $\psi\ll\omega$ by Lemma \ref{L:ACmodule} and $\psi\in \AC(\Delta)$ as desired.
\end{proof}

We will also make use of the following variant. Let $\Delta_*\subset \fT^*$ denote the norm closure of the convex hull of $\{|\phi^\dagger|:\phi\in \A^\perp,\|\phi\|=1\}$. 
The triple $\A\subset \fT\subset\fW$ is said to be \emph{coanalytic} if a bounded net $(b_i)$ in $\A$ converges to $0$ in the weak-$*$ topology of $\fW$ whenever 
\[
\lim_i \alpha(b^*_i)=0, \quad \alpha\in \AC(\Delta_*).
\]
As above, we identify conditions that imply coanalyticity. 

\begin{proposition}\label{P:coanal}
Let $\H$ be a Hilbert space.
Let $\fT\subset B(\H)$ be a unital $\rC^*$-subalgebra and let $\A\subset \fT$ be a norm-closed unital subalgebra. Assume that $\fT$ contains the ideal of compact operators on $\H$. Assume also that there is a subspace $\Gamma\subset \H$ consisting of non-cyclic vectors for $\A$ with the property that for every $h\in \H\ominus \Gamma$, there is $h'\in \Gamma$ such that
\[
\langle ah,h \rangle-\langle ah',h'\rangle=0, \quad a\in \A.
\]
Then, for every pair of vectors $h,k\in \H$, the functional
\[
t\mapsto \langle th,k\rangle, \quad t\in \fT
\]
lies in $\AC(\Delta_*)$. In particular, the triple $\A\subset \fT\subset B(\H)$ is coanalytic.

\end{proposition}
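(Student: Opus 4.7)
The plan is to mirror the proof of Proposition \ref{P:anal}, but with the roles of $\phi$ and $\phi^\dagger$ interchanged so that the positive functionals produced land in $\Delta_*$ rather than in $\Delta$; the matching hypothesis will then bridge to vectors outside $\Gamma$. Fix $h,k\in\H$ and put $\psi(t)=\langle th,k\rangle$. The coanalyticity conclusion follows at once once $\psi\in\AC(\Delta_*)$ is established for every such pair, since $\psi(b_i^*)=\overline{\langle b_ik,h\rangle}$ and weak convergence of a bounded net in $B(\H)$ coincides with weak-$*$ convergence. I will repeatedly use that $\AC(\Delta_*)$ is a norm-closed subspace by Lemma \ref{L:ACclosed}, and that absolute continuity is transitive, since $\phi\ll\psi$ is equivalent to $\fs_\phi\leq\fs_\psi$. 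Throughout, $\omega_k$ denotes the vector state $t\mapsto\langle tk,k\rangle$.

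The first case is $k\in\Gamma$ with $\|k\|=1$. Non-cyclicity of $k$ for $\A$ yields a unit vector $y\perp\A k$, so $\phi(t)=\langle tk,y\rangle$ lies in $\A^\perp$ with $\|\phi\|=1$. A direct computation gives $\phi^\dagger(t)=\langle ty,k\rangle$, and selecting a partial isometry $v\in\fT$ with $vk=y$ (available as $\fT$ contains the compact operators) rewrites $\phi^\dagger=\omega_k(\cdot\,v)$. Lemma \ref{L:takunique} then delivers $|\phi^\dagger|=\omega_k$, so $\omega_k\in\Delta_*$. Picking $w\in\fT$ with $wk=h$ gives $\psi=\omega_k(\cdot\,w)$, and Lemma \ref{L:ACmodule} forces $\psi\ll\omega_k$, hence $\psi\in\AC(\Delta_*)$. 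Rescaling handles any nonzero $k\in\Gamma$.

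Next I would treat $k\in\H\ominus\Gamma$. The matching hypothesis furnishes $k'\in\Gamma$ with $\omega_k|_\A=\omega_{k'}|_\A$, so that $\beta:=\omega_k-\omega_{k'}$ belongs to $\A^\perp$. Crucially, $\beta$ is self-adjoint (a difference of positive functionals), so $\beta^\dagger=\beta$ and $|\beta^\dagger|=|\beta|$; normalizing, $|\beta|/\|\beta\|$ lies in $\Delta_*$. Since $\beta\ll|\beta|$ holds tautologically and $\omega_{k'}\in\AC(\Delta_*)$ by the first case, the subspace property of $\AC(\Delta_*)$ places $\omega_k=\omega_{k'}+\beta$ in $\AC(\Delta_*)$. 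Applying Lemma \ref{L:ACmodule} once more with $w\in\fT$ satisfying $wk=h$ yields $\psi\ll\omega_k$, and transitivity of absolute continuity puts $\psi$ into $\AC(\Delta_*)$.

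For a general $k$, I would split $k=k_\Gamma+k_\perp$ along $\H=\Gamma\oplus(\H\ominus\Gamma)$, write $\psi=\psi_{h,k_\Gamma}+\psi_{h,k_\perp}$, and apply the two previous cases to the summands. The main obstacle is the second case: the matching hypothesis is only a one-sided statement about restrictions to $\A$, and it must be promoted to an absolute continuity statement on the entire bidual $\fT^{**}$. What enables this is the self-adjointness of $\beta$, which forces $|\beta|=|\beta^\dagger|$ to belong to $\Delta_*$ in addition to the ``left'' closure $\Delta$; this double membership is precisely what allows us to bridge $\omega_k$ into $\AC(\Delta_*)$.
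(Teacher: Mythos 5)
Your proof is correct and follows essentially the same route as the paper's: the same reduction to the two cases $k\in\Gamma$ and $k\in\H\ominus\Gamma$ via the subspace property of $\AC(\Delta_*)$, the same use of a rank-one partial isometry together with Lemma \ref{L:takunique} to place $\omega_k$ in $\Delta_*$ when $k\in\Gamma$, and the same exploitation of the self-adjointness of $\omega_k-\omega_{k'}$ to transfer membership in $\A^\perp$ into membership in $\AC(\Delta_*)$ in the second case. The only differences are that you make explicit a few steps the paper leaves implicit (transitivity of $\ll$, the decomposition of a general $k$, and the passage from WOT to weak-$*$ convergence for bounded nets).
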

\begin{proof}
Fix unit vectors $h,k\in \H$ and define a linear functional $\psi:\fT\to \bC$  as
\[
\psi(t)=\langle th,k\rangle, \quad t\in \fT.
\]
Since $\AC(\Delta_*)$ is a subspace of $\fT^*$ by Lemma \ref{L:ACclosed}, it is sufficient to establish that $\psi\in \AC(\Delta_*)$ if $k\in \Gamma$ or $k\in \H\ominus \Gamma$.

First, assume that $k\in \Gamma$, so that there is a unit vector $x\in \H$ with $\langle ak,x\rangle=0$ for every $a\in \A$. Define bounded linear functionals $\phi$ and $\omega$ on $\fT$ as
\[
\phi(t)=\langle tk ,x \rangle \qand \omega(t)=\langle tk,k\rangle
\]
for every $ t\in \fT.$  We see that $\phi\in \A^\perp$ by choice of $x$. Further,
\[
\phi^\dagger(t)=\langle tx, k\rangle, \quad t\in \fT.
\]
Since $\fT$ contains the compact operators, we see that $\|\phi\|=1$, and that there is a  partial isometry $v\in \fT$ such that $vk=x$ and $v^*v$ is the projection onto $\bC k$. Hence, $\phi^\dagger=\omega(\cdot v)$. Using Lemma \ref{L:takunique}, we infer that $\omega=|\phi^\dagger|\in \Delta_*$. Similarly, there is a partial isometry $w\in \fT$ with the property that $wk=h$. Then, we find $\psi=\omega( \cdot w)$, so that $\psi\ll \omega$ by Lemma \ref{L:ACmodule}. 
In particular, we have $\psi\in \AC(\Delta_*)$.

It remains only to deal with the case where $k\in \H\ominus \Gamma$. By choice of $\Gamma$, there is $k'\in \Gamma$ such that
\[
\langle ak,k \rangle-\langle ak',k'\rangle=0, \quad a\in \A.
\]
Define positive linear functionals $\omega_0$ and $\omega_1$ on $\fT$ as
\[
\omega_0(t)=\langle tk, k\rangle \qand \omega_1(t)=\langle tk', k'\rangle
\]
for $t\in \fT$. We have $\omega_0-\omega_1\in \A^\perp$, and because $\omega_0-\omega_1$ is self-adjoint, this implies that $\omega_0-\omega_1\in \AC(\Delta_*)$. On the other hand, $\omega_1\in \AC(\Delta_*)$ by the previous paragraph, so that $\omega_0\in \AC(\Delta_*)$ as well by Lemma \ref{L:ACclosed}. As above, we may find $r\in \fT$ such that $rk=h$ so that $\psi=\omega_0(\cdot r)$ and $\psi\ll\omega_0$ by Lemma \ref{L:ACmodule}. Hence, $\psi\in \AC(\Delta_*)$.
\end{proof}

We remark that, if $\frak{T}$ is commutative, then $\Delta=\Delta_*$. 
It then follows from Theorem \ref{T:RN} in this commutative setting that a triple is analytic if and only if it is coanalytic.

\subsection{The characterization}\label{SS:nccr}

In this subsection, we obtain our main results, which generalize the Cole--Range theorem for general $\rC^*$-algebras.

\begin{theorem}\label{T:nccr}
Let $\fW$ be a von Neumann algebra, let $\fT\subset \fW$ be a unital $\rC^*$-subalgebra and let $\A\subset \fT$ be a unital norm-closed subalgebra. Let $\Delta$ and $\Delta_*$ denote the norm closure in $\fT^*$ of the convex hull of
\[
\{|\phi|:\phi\in \A^\perp,\|\phi\|=1\} \qand \{|\phi^\dagger|:\phi\in \A^\perp,\|\phi\|=1\} 
\] 
respectively. Moreover, we let
\[
\Band=\{\phi\in \fT^*:\phi|_\A\in \Hen_{\fW}(\A)\}.
\]
Then, the following statements hold.
\begin{enumerate}[{\rm (i)}]
\item The triple $\A\subset \fT\subset \fW$ is analytic if and only if
$
\Band\subset \AC(\Delta).
$

\item  The triple $\A\subset \fT\subset \fW$ is coanalytic if and only if 
$
\Band^\dagger\subset \AC(\Delta_*).
$

\end{enumerate}

\end{theorem}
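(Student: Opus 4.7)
The strategy is to prove both implications of (i); part (ii) then follows by an essentially verbatim adjoint argument in which $\Delta$, $\phi$, and right multiplication are replaced by $\Delta_*$, $\phi^\dagger$, and left multiplication respectively. The easy direction of (i) is this: any $\omega\in\fW_*$ restricts to $\omega|_\fT\in\fT^*$, whose further restriction $\omega|_\A$ manifestly extends weak-$*$ continuously to $\fW$, so $\omega|_\fT\in\Band\subset\AC(\Delta)$ under the hypothesis. Hence any bounded net $(b_i)\subset\A$ satisfying $\alpha(b_i)\to 0$ for all $\alpha\in\AC(\Delta)$ also satisfies $\omega(b_i)\to 0$ for all $\omega\in\fW_*$, which is the desired weak-$*$ convergence in $\fW$. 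The reverse direction of (ii) is identical upon observing that $\phi^\dagger(b_i^*)=\overline{\phi(b_i)}$.

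For the harder direction of (i), I would argue by contradiction. Given $\phi\in\Band$, apply Theorem \ref{T:ncRiesz} to decompose $\phi=\phi_{ac}+\phi_s$ with $\phi_{ac}\in\AC(\Delta)$ and $\phi_s\in\SG(\Delta)$, and assume $\|\phi_s\|=1$. By Theorem \ref{T:rainwater} there is a projection $p\in\fT^{**}$ (a countable supremum of closed projections) satisfying $|\phi_s|(p)=1$ and $\widehat\delta(p)=0$ for every $\delta\in\Delta$; the first equality forces $\fs_{\phi_s}\leq p$. The decisive step is to locate $pv_{\phi_s}^*$ in the weak-$*$ closure $\ol{\A}^{w^*}$ inside $\fT^{**}$: for every $\psi\in\A^\perp$ with $\|\psi\|=1$, one has $|\psi|\in\Delta$ and hence $|\psi|(p)=0$, and the Schwarz inequality applied to $\widehat\psi(pv_{\phi_s}^*)=|\psi|(pv_{\phi_s}^*v_\psi)$ yields $\widehat\psi(pv_{\phi_s}^*)=0$; thus $pv_{\phi_s}^*\in(\A^\perp)_\perp=\ol{\A}^{w^*}$. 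Goldstine's theorem then produces a bounded net $(a_i)\subset\A$ converging weak-$*$ in $\fT^{**}$ to $pv_{\phi_s}^*$.

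To finish, observe that for every $\alpha\in\AC(\Delta)$ with $\alpha\ll\delta\in\Delta$, the identity $|\delta|(p)=0$ combined with Lemma \ref{L:statevanish} gives $\widehat\alpha(pv_{\phi_s}^*)=0$, so $\alpha(a_i)\to 0$; analyticity then forces $a_i\to 0$ weak-$*$ in $\fW$, and the Henkin condition on $\phi|_\A$ yields $\phi(a_i)\to 0$. On the other hand, $\widehat{\phi_{ac}}(pv_{\phi_s}^*)=0$ by the same Lemma, while $\widehat{\phi_s}(pv_{\phi_s}^*)=|\phi_s|(p\fs_{\phi_s})=|\phi_s|(\fs_{\phi_s})=1$ using $\fs_{\phi_s}\leq p$, contradicting $\phi(a_i)\to 0$. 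For the forward direction of (ii), an analogous argument produces a Rainwater projection $p$ attached to the singular part $\tau$ of $\phi^\dagger=\gamma+\tau$; adjointing the Schwarz step now gives $\eta^*p\in\ol{\A}^{w^*}$ for every $\eta\in\fT^{**}$, and choosing $\eta=v_\tau^*$ and approximating $v_\tau p$ by a bounded net $(a_i)\subset\A$ leads to the contradiction via coanalyticity (applied through $\alpha(a_i^*)\to\widehat\alpha(pv_\tau^*)=0$) together with the direct computation $\widehat\phi(v_\tau p)=\overline{\widehat\tau(pv_\tau^*)}=\|\tau\|\neq 0$.

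The principal obstacle is choosing the correct element of $\ol{\A}^{w^*}$ to approximate: $p$ alone will not suffice, as $\widehat{\phi_s}(p)=|\phi_s|(pv_{\phi_s})$ can vanish. The noncommutative polar decomposition forces the multiplication of $p$ by the partial isometry $v_{\phi_s}^*$ on the right (respectively $v_\tau$ on the left for part (ii)), in order to align the support of the singular part with the approximated element while keeping the product inside $\ol{\A}^{w^*}$. This left/right asymmetry is precisely what necessitates the two parallel statements in the theorem.
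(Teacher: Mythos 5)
Your argument is correct in substance and follows essentially the same route as the paper: both directions hinge on the Riesz decomposition of Theorem \ref{T:ncRiesz}, on showing that a suitable test element killed by $\Delta$ lies in $\A^{\perp\perp}$, and on combining Goldstine's theorem with analyticity and the Henkin property of $\phi$. The only real difference is cosmetic: the paper tests the singular part $\sigma=\widehat\phi((I-\fr_\Delta)\,\cdot)$ against every element $(I-\fr_\Delta)t$ with $t\in\fT$ and concludes directly that $\sigma=0$, whereas you run a contradiction off the single element $pv_{\phi_s}^*$, using the partial isometry from the polar decomposition to make the singular part visibly nonzero at one point. Your closing remark about why the factor $v_{\phi_s}^*$ is needed is accurate and is exactly the asymmetry the paper's two-sided statement reflects.

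One step should be repaired. You invoke Theorem \ref{T:rainwater} to produce the projection $p$, but that theorem requires $\Delta$ to be \emph{weak-$*$} closed, whereas the $\Delta$ of Theorem \ref{T:nccr} is only the \emph{norm} closure of a convex hull, so the citation is not licensed as written. Fortunately you never use the extra topological structure (the countable supremum of closed projections): the only properties of $p$ you need are $\widehat\delta(p)=0$ for all $\delta\in\Delta$ and $|\phi_s|(p)=\|\phi_s\|$, and these are already delivered by $p=I-\fr_\Delta$ with $\fr_\Delta=\vee_{\delta\in\Delta}\fs_\delta$, via Lemma \ref{L:rproj} and the fact that $\fs_{\phi_s}\fr_\Delta=0$ for $\phi_s\in\SG(\Delta)$ --- which is precisely the projection the paper uses. (Also, a small notational slip: the set you want is $(\A^\perp)^\perp=\A^{\perp\perp}\subset\fT^{**}$, not $(\A^\perp)_\perp$, which would live in $\fT$.) With those adjustments the proof goes through.
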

\begin{proof}
(i) Assume first that $\Band\subset \AC(\Delta).$ Let $(b_i)$ be a bounded net in $\A$ such that
\[
\lim_i \alpha(b_i)=0, \quad \alpha\in \AC(\Delta).
\]
Let $\omega$ be a weak-$*$ continuous functional on $\fW$. We find $\omega|_{\fT}\in \Band$, so by assumption we see that
\[
\lim_i \omega(b_i)=0.
\]  
We conclude that $(b_i)$ converges to $0$ in the weak-$*$ topology of $\fW$, so the triple $\A\subset \fT\subset \fW$ is analytic.

Conversely, assume that the triple $\A\subset \fT\subset \fW$ is analytic. 
Let $\phi\in \Band$, and set $\fr=\vee_{\delta\in\Delta}\fs_{\delta}$ and $\sigma=\phi((I-\frk{r})\cdot)$. 
By virtue of Theorem \ref{T:ncRiesz} we have $\phi-\sigma\in \AC(\Delta)$ and $\sigma\in \SG(\Delta)$. 
Fix $t\in \fT$. For $\delta\in \Delta$, we clearly have that $\widehat\delta(I-\fr)=0$, and in turn we find
\begin{equation}\label{Eq:Aperpperp}
\widehat\alpha((I-\fr)t)=0
\end{equation}
for every $\alpha\in \AC(\Delta)$ by Lemma \ref{L:statevanish}. By definition, we have $\A^\perp\subset \AC(\Delta)$, so we conclude from \eqref{Eq:Aperpperp} that $(I-\fr)t\in \A^{\perp\perp}$. Seeing as $\A^{\perp\perp}$ is isometrically and weak-$*$ homeomorphically isomorphic to $\A^{**}$, we may apply Goldstine's theorem to obtain a bounded net $(b_i)$ in $\A$ converging to $(I-\fr)t$ in the weak-$*$ topology of $\fT^{**}$. Given $\alpha\in\AC(\Delta)$, we see that
\[
\lim_i \alpha(b_i)=\widehat\alpha((I-\fr)t)=0
\]
by virtue of \eqref{Eq:Aperpperp}.  Since the triple $\A\subset \fT\subset \fW$ is analytic, we infer that $(b_i)$ converges to $0$ in the weak-$*$ topology of $\fW$. Seeing as $\phi\in \Band$, this forces 
\[
\sigma(t)=\widehat\phi((I-\fr)t)=\lim_i \phi(b_i)=0.
\]
Because $t\in \fT$ was arbitrary, it follows that $\sigma=0$ so indeed $\phi\in \AC(\Delta)$.

(ii) The proof is very similar to the previous one, but we provide the details for completeness. Assume first that $\Band^\dagger\subset \AC(\Delta_*)$.
Let $(b_i)$ be a bounded net in $\A$ such that
\[
\lim_i \alpha(b^*_i)=0, \quad \alpha\in \AC(\Delta_*).
\]
Let $\omega$ be a weak-$*$ continuous functional on $\fW$. We find $\omega|_\fT\in \Band$, so by assumption $\omega^\dagger|_\fT \in \AC(\Delta_*)$ and we see that
\[
\lim_i \omega(b_i)=\lim_i\ol{ \omega^\dagger(b^*_i)}=0.
\]  
We conclude that $(b_i)$ converges to $0$ in the weak-$*$ topology of $\fW$, and hence the triple $\A\subset \fT\subset \fW$ is coanalytic.

Conversely, assume that the triple $\A\subset \fT\subset \fW$ is coanalytic.
Let $\phi\in \Band$, and set $\frk{r}'=\vee_{\delta\in\Delta_*}\frk{s}_\delta$ and $\sigma=\phi^{\dagger}( (I-\frk{r}')\cdot)$. 
By virtue of Theorem \ref{T:ncRiesz}, we have $\phi^\dagger-\sigma\in \AC(\Delta_*)$ and $\sigma\in \SG(\Delta_*)$. 
Fix $t\in \fT$. For $\delta\in \Delta_*$, we clearly have that $\widehat\delta(I-\fr')=0$, and in turn we find
\begin{equation}\label{Eq:Aperpperp*}
\widehat\alpha((I-\fr')t)=0
\end{equation}
for every $\alpha\in \AC(\Delta_*)$ by Lemma \ref{L:statevanish}. By definition, we see that
\[
\{\psi^\dagger: \psi\in \A^\perp\}\subset \AC(\Delta_*)
\]
so that
\[
\widehat\psi(t^*(I-\fr'))=\ol{\widehat\psi^\dagger((I-\fr')t)}=0, \quad\psi\in \A^\perp
\]
by \eqref{Eq:Aperpperp*}. We thus infer that $t^*(I-\fr')\in \A^{\perp\perp}$. Seeing as $\A^{\perp\perp}$ is isometrically and weak-$*$ homeomorphically isomorphic to $\A^{**}$, we may apply Goldstine's theorem to obtain a bounded net $(b_i)$ in $\A$ converging to $t^*(I-\fr')$ in the weak-$*$ topology of $\fT^{**}$. Given $\alpha\in\AC(\Delta_*)$, we see that
\[
\lim_i \alpha(b^*_i)=\widehat\alpha((I-\fr')t)=0
\]
by virtue of \eqref{Eq:Aperpperp*} again.  Since the triple $\A\subset \fT\subset \fW$ is coanalytic, we infer that $(b_i)$ converges to $0$ in the weak-$*$ topology of $\fW$. Seeing as $\phi\in \Band$, this forces 
\[
\sigma(t)=\widehat\phi^\dagger((I-\fr')t)=\lim_i \phi^\dagger(b^*_i)=\lim_i \ol{\phi(b_i)}=0.
\]
Because $t\in \fT$ was arbitrary, it follows that $\sigma=0$ so indeed $\phi^\dagger\in \AC(\Delta_*)$.
\end{proof}

We now aim to sharpen the previous theorem by establishing the reverse inclusions. The next result identifies the obstruction to doing so, based on the following concept. Given a unital $\rC^*$-algebra $\fT$, a subset $\Band\subset \fT^*$ is called a \emph{left band} if given $\phi\in \fT^*$ and $\beta\in \Band$ with $\phi\ll\beta$, we must necessarily have that $\phi\in \Band$. Further, we say that $\Band$ is a \emph{right band} if given $\phi\in \fT^*$ and $\beta\in \Band$ with $\phi^\dagger\ll\beta^\dagger$, we must necessarily have that $\phi\in \Band$. It is easily verified then that  $\Band$ is a right band if and only if $\Band^\dagger$ is a left band. We can now prove another one of our main results.

\begin{theorem}\label{T:Henband}
Let $\fW$ be a von Neumann algebra, let $\fT\subset \fW$ be a unital $\rC^*$-subalgebra and let $\A\subset \fT$ be a unital norm-closed subalgebra. Let $\Delta$ and $\Delta_*$ denote the norm closure in $\fT^*$ of the convex hull of
\[
\{|\phi|:\phi\in \A^\perp,\|\phi\|=1\} \qand \{|\phi^\dagger|:\phi\in \A^\perp,\|\phi\|=1\}
\] 
respectively.  Moreover, we let
\[
\Band=\{\phi\in \fT^*:\phi|_\A\in \Hen_{\fW}(\A)\}.
\]
Then, the following statements hold.
\begin{enumerate}[{\rm (i)}]
\item Assume that the triple $\A\subset \fT\subset \fW$ is analytic. Then,  $\Band=\AC(\Delta)$  if and only if $\Band$ is a left band.

\item Assume that the triple $\A\subset \fT\subset \fW$ is coanalytic. Then,   $\Band^\dagger=\AC(\Delta_*)$
 if and only if $\Band$ is a right band.
 
 \item Assume that the triple $\A\subset \fT\subset \fW$ is both analytic and coanalytic. Then,   $\Band=\Band^\dagger=\AC(\Delta)=\AC(\Delta_*)$
 if and only if $\Band$ is both a left band and a right band.
 
\end{enumerate}
\end{theorem}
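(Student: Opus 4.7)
The plan is to prove (i) first, then mirror the argument via the adjoint for (ii), and combine the two for (iii). Three basic facts will be used throughout: Theorem \ref{T:nccr} already supplies $\Band\subset \AC(\Delta)$ and $\Band^\dagger\subset \AC(\Delta_*)$; absolute continuity is transitive, since $\phi\ll\psi\ll\eta$ implies $\fs_\phi\leq \fs_\psi\leq \fs_\eta$; and by Lemma \ref{L:ACmodule} combined with the polar formulas \eqref{Eq:absval}, every functional satisfies both $|\phi|\ll\phi$ and $\phi\ll|\phi|$. I will also use that $\Band$, being the preimage of the norm-closed subspace $\Hen_\fW(\A)\subset\A^*$ under the restriction map, is a norm-closed subspace of $\fT^*$, and hence so is $\Band^\dagger$. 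A consequence I will invoke several times is that $\AC(\Theta)$, for any $\Theta\subset\fT^*$, is automatically a left band by transitivity.

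For part (i), the forward implication is direct: if $\phi\ll\beta$ with $\beta\in \Band=\AC(\Delta)$, then some $\delta\in \Delta$ satisfies $\beta\ll\delta$, and transitivity gives $\phi\in\AC(\Delta)=\Band$. For the converse, assume the triple is analytic and $\Band$ is a left band; Theorem \ref{T:nccr}(i) supplies $\Band\subset \AC(\Delta)$, so the real content is the reverse inclusion. The key step is to show $\Delta\subset \Band$. For any $\phi\in \A^\perp$ with $\|\phi\|=1$, the restriction $\phi|_\A=0$ is trivially Henkin, so $\phi\in\Band$; since $|\phi|\ll\phi$, the left band hypothesis forces $|\phi|\in\Band$. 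Norm closure and convexity of $\Band$ then give $\Delta\subset \Band$, and a final application of the left band hypothesis shows that any $\alpha\in\AC(\Delta)$ lies in $\Band$.

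Part (ii) follows the same outline via adjoints. The forward direction is immediate from the observation above: $\AC(\Delta_*)$ is a left band, so $\Band^\dagger=\AC(\Delta_*)$ is a left band, which is equivalent to $\Band$ being a right band. For the converse, assume coanalyticity and that $\Band$ is a right band. Theorem \ref{T:nccr}(ii) gives $\Band^\dagger\subset\AC(\Delta_*)$, and the main step is to show $\Delta_*\subset \Band$. For $\phi\in\A^\perp$ with $\|\phi\|=1$, applying Lemma \ref{L:ACmodule} to $\phi^\dagger$ yields $|\phi^\dagger|\ll\phi^\dagger$; since $|\phi^\dagger|$ is self-adjoint, this rewrites as $(|\phi^\dagger|)^\dagger\ll\phi^\dagger$, and the right band property applied to $\phi\in\Band$ forces $|\phi^\dagger|\in \Band$. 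Norm closure and convexity give $\Delta_*\subset\Band$, and self-adjointness of its elements promotes this to $\Delta_*\subset\Band^\dagger$. Finally, for $\alpha\in\AC(\Delta_*)$, pick $\delta\in\Delta_*$ with $\alpha\ll\delta=\delta^\dagger\in\Band$; then the right band property applied with $\beta=\delta$ and $\tilde\phi=\alpha^\dagger$, for which $\tilde\phi^\dagger=\alpha\ll\delta=\beta^\dagger$, gives $\alpha^\dagger\in \Band$, i.e., $\alpha\in\Band^\dagger$.

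For (iii), the forward direction is immediate: both $\Band=\AC(\Delta)$ and $\Band^\dagger=\AC(\Delta_*)$ are left bands by transitivity, and $\Band^\dagger$ being a left band means $\Band$ is a right band. Conversely, assuming both band properties, (i) and (ii) give $\Band=\AC(\Delta)$ and $\Band^\dagger=\AC(\Delta_*)$, so only $\AC(\Delta)=\AC(\Delta_*)$ remains. The proof of (i) showed $\Delta\subset\Band=\AC(\Delta)$; since elements of $\Delta$ are positive and hence self-adjoint, also $\Delta\subset \Band^\dagger=\AC(\Delta_*)$, and transitivity gives $\AC(\Delta)\subset \AC(\Delta_*)$. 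The symmetric observation $\Delta_*\subset \Band=\AC(\Delta)$ from the proof of (ii) produces the reverse inclusion, so all four sets coincide. The one genuinely delicate point across all three parts is locating a seed of functionals inside $\Band$ that is large enough to generate $\Delta$ or $\Delta_*$ under absolute continuity; Lemma \ref{L:ACmodule} is exactly what transfers the trivially Henkin functionals $\phi\in\A^\perp$ to their absolute values $|\phi|$ and $|\phi^\dagger|$, after which the relevant band hypothesis closes everything up.
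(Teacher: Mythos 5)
Your proof is correct and takes essentially the same route as the paper's: in both, the key step is that $\A^\perp\subset\Band$ trivially, that $|\phi|\ll\phi$ (resp.\ $|\phi^\dagger|\ll\phi^\dagger$, via Lemma \ref{L:ACmodule} and \eqref{Eq:absval}) lets the band hypothesis push the generators of $\Delta$ (resp.\ $\Delta_*$) into the norm-closed convex set $\Band$, and a second application of the band hypothesis then absorbs all of $\AC(\Delta)$ (resp.\ $\AC(\Delta_*)$). The only cosmetic difference is in (iii), where you obtain $\AC(\Delta)=\AC(\Delta_*)$ from mutual containment of the generating sets, while the paper instead verifies $\Band=\Band^\dagger$ directly; both are immediate consequences of (i) and (ii).
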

\begin{proof}

(i) It is trivial that the equality  $\Band=\AC(\Delta)$
implies that $\Band$ is a left band.  Assume conversely that $\Band$ is a left band. Note that $\Band\subset \AC(\Delta)$ by Theorem \ref{T:nccr}, so we aim to establish the reverse inclusion. Let $\phi\in \A^\perp$. Trivially, we have that $\phi\in \Band$. But $|\phi|\ll\phi$, so by our assumption we infer that $|\phi|\in \Band$. Since $\Hen_\fW(\A)$ is a norm-closed subspace of $\A^*$, we infer that $\Delta\subset \Band$. Using once again the fact that $\Band$ is a left band yields $\AC(\Delta)\subset \Band$.

(ii) The proof is nearly identical to the one above. If $\Band^\dagger=\AC(\Delta_*)$, then it is trivial that $\Band^\dagger$ is a left band, whence $\Band$ is a right band. Assume conversely that $\Band$ is a right band, or equivalently that $\Band^\dagger$ is a left band. Note that $\Band^\dagger\subset \AC(\Delta_*)$ by Theorem \ref{T:nccr}, so we aim to establish the reverse inclusion.  Let $\phi\in \A^\perp$. Trivially, we have that $\phi\in \Band$ and thus $\phi^\dagger\in \Band^\dagger$. But $|\phi^\dagger|\ll\phi^\dagger$, so by our assumption we infer that $|\phi^\dagger|\in \Band^\dagger$. Since $\Hen_\fW(\A)$ is a norm-closed subspace of $\A^*$, we infer that $\Delta_*\subset \Band^\dagger$. Using again that $\Band^\dagger$ is a left band yields $\AC(\Delta_*)\subset \Band^\dagger$. 

(iii) Assume that $\Band$ is both a left band and a right band. Combining (i) and (ii), we find $\Band=\AC(\Delta)=\AC(\Delta_*)^\dagger$. 
Let $\beta\in \Band$. There is a state $\delta\in \Delta$ such that $\beta\ll\delta$. Equivalently, we have that $(\beta^\dagger)^\dagger\ll \delta^\dagger$, since $\delta$ is self-adjoint. Using now that $\delta\in \Delta\subset \Band$ and that $\Band$ is a right band, we find $\beta^\dagger\in \Band$. We conclude that $\Band=\Band^\dagger$, so indeed   $\Band=\Band^\dagger=\AC(\Delta)=\AC(\Delta_*)$. The converse is clear.
\end{proof}

In the next section, we will see that there are natural examples where only one of the statements from Theorems \ref{T:nccr} and \ref{T:Henband} holds. Thus, it is important to distinguish the left and right versions of our objects of interest in the foregoing discussion. This is, of course, a reflection of the fact that the $\rC^*$-algebra $\fT$ may not be commutative.

\section{Examples}\label{S:Ex}

In this section, we apply the general tools developed in Section \ref{S:ncHenkin} to concrete examples of importance in multivariate operator theory.
Throughout, we retain the notation used above: given a triple $\A\subset \fT\subset \fW$, we let  $\Delta$ and $\Delta_*$ denote the norm closure in $\fT^*$ of the convex hull of
\[
\{|\phi|:\phi\in \A^\perp,\|\phi\|=1\} \qand \{|\phi^\dagger|:\phi\in \A^\perp,\|\phi\|=1\}
\] 
respectively.  Moreover, we let
\[
\Band=\{\phi\in \fT^*:\phi|_\A\in \Hen_{\fW}(\A)\}.
\]

\subsection{The ball algebra}
Let $d\geq 1$ be an integer. Let $\bB_d\subset \bC^d$ denote the open unit ball, and let $\bS_d$ denote its topological boundary, the unit sphere. Recall that the ball algebra $\AB\subset \rC(\bS_d)$ is the norm closure of the polynomials.  Let $\sigma$ denote the unique rotation invariant Borel probability measure on $\bS_d$. In this context, Cauchy's formula  \cite[Paragraph 3.2.4]{rudin2008}  says that given $f\in \AB$ and $\lambda\in \bB_d$, we have 
\begin{equation}\label{Eq:Cauchy}
f(\lambda)=\int_{\bS_d}\frac{f(\zeta)}{(1-\langle \zeta,\lambda\rangle)^d}d\sigma(\zeta).
\end{equation}
Let $H^\infty(\bB_d)$ denote the algebra of bounded holomorphic functions on $\bB_d$, equipped with the supremum norm over $\bB_d$. A function in $\HB$ has $[\sigma]$-a.e defined radial boundary values \cite[Section 5.6]{rudin2008}. An application of \eqref{Eq:Cauchy} together with the dominated convergence theorem then reveals that $H^\infty(\bB_d)$ can be embedded isometrically inside of $L^\infty(\bS_d,\sigma)$. Furthermore, standard properties of the Poisson kernel \cite[Section 3.3]{rudin2008} imply that  the corresponding copy of $H^\infty(\bB_d)$ is a weak-$*$ closed subalgebra of  $L^\infty(\bS_d,\sigma)$. In this fashion, $\HB$ is endowed with a weak-$*$ topology. It is a routine consequence of \eqref{Eq:Cauchy} that a bounded net $(f_i)$ in $H^\infty(\bB_d)$ converges in this weak-$*$ topology  precisely when it converges pointwise on $\bB_d$. 

Given $f\in \HB$ and $r>0$, if we define $f_r:\bB_d\to \bC$ as
\[
f_r(z)=f(rz), \quad z\in \bB_d
\]
then $f_r\in \AB$ and $\|f_r\|\leq \|f\|$. It is readily seen that the net $(f_r)$ converges to $f$ pointwise on $\bB_d$, and hence in the weak-$*$ topology of $H^\infty(\bB_d)$. We conclude that the closed unit ball of $\AB$ is weak-$*$ dense in that of $\HB$. In turn, Proposition \ref{P:Esc} implies that
\[
\Hen_{L^\infty(\bS_d,\sigma)}(\AB)=\E_{L^\infty(\bS_d,\sigma)}(\AB)
\]
which is equivalent to the classical Valskii decomposition \cite[Theorem 9.2.1]{rudin2008}.

Next, we verify that the triple
\[
\AB\subset \rC(\bS_d)\subset L^\infty(\bS_d,\sigma)
\] 
is both analytic and coanalytic. The general tools developed in Subsection \ref{SS:anal} do not apply here, so we provide a direct argument.

\begin{lemma}\label{L:ABanal}
The triple $\AB\subset \rC(\bS_d)\subset L^\infty(\bS_d,\sigma)$ is both analytic and coanalytic.
\end{lemma}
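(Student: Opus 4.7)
The plan is to exploit the commutativity of $\rC(\bS_d)$ and the availability of a very concrete annihilating measure on $\bS_d$. Since $\rC(\bS_d)$ is commutative, $\Delta=\Delta_*$, and by the remark following Proposition~\ref{P:coanal} (together with Theorem~\ref{T:RN}), analyticity and coanalyticity of the triple are equivalent in this setting. Hence I only need to establish one of them, and I will aim directly at analyticity.

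To prove analyticity, I would show that the functional $\psi_\sigma:f\mapsto\int_{\bS_d}f\,d\sigma$ on $\rC(\bS_d)$ lies in $\AC(\Delta)$. The natural candidate annihilator is the measure $\mu=z_1\,\sigma$. Using rotation invariance of $\sigma$ and the orthogonality relations $\int z^\alpha\bar z^\beta\,d\sigma=0$ for $\alpha\neq\beta$, one checks on monomials that $\int p(z)z_1\,d\sigma=0$ for every polynomial $p$, so $\mu\in\AB^\perp$ by density. By Lemma~\ref{L:absvalmeasure}, $|\mu|$ is the measure $|z_1|\,\sigma$. Because the slice $\{z\in\bS_d:z_1=0\}$ is a lower-dimensional submanifold (and $|z_1|\equiv 1$ on $\bT$ when $d=1$), it is $\sigma$-null, so $\sigma$ and $|z_1|\sigma$ share the same null sets. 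After normalizing $\mu$ to unit norm, its absolute value lies in $\Delta$, and Lemma~\ref{L:ACmeasure} then delivers $\psi_\sigma\in\AC(\Delta)$.

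With $\psi_\sigma\in\AC(\Delta)$ in hand, for every $g\in L^1(\bS_d,\sigma)$ the measure $g\,\sigma$ is classically absolutely continuous with respect to $\sigma$, hence with respect to $|\mu|/\|\mu\|\in\Delta$; Lemma~\ref{L:ACmeasure} again identifies the corresponding functional as an element of $\AC(\Delta)$. To conclude analyticity, I take a bounded net $(b_i)$ in $\AB$ with $\lim_i\alpha(b_i)=0$ for all $\alpha\in\AC(\Delta)$; applying this to the functionals induced by $g\,\sigma$ gives $\int b_ig\,d\sigma\to 0$ for every $g\in L^1(\bS_d,\sigma)$, which is exactly weak-$*$ convergence of $(b_i)$ to $0$ in $L^\infty(\bS_d,\sigma)$.

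The one delicate point — really the only substantive step — is producing an annihilator $\mu\in\AB^\perp$ whose absolute value has the same null sets as $\sigma$; everything else is then a transitivity argument for absolute continuity combined with the duality $L^\infty(\bS_d,\sigma)=L^1(\bS_d,\sigma)^*$. The choice $\mu=z_1\,\sigma$ succeeds because rotation invariance forces $\int z^\gamma\,d\sigma=0$ for any nonzero multi-index $\gamma$, and because the zero set of $z_1$ on $\bS_d$ is of real codimension at least two (and empty when $d=1$).
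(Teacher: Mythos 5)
Your proof is correct and follows essentially the same route as the paper's: both produce an annihilating measure built from coordinate functions whose absolute value is mutually absolutely continuous with $\sigma$, and then pass into $\AC(\Delta)$ via Lemmas \ref{L:absvalmeasure} and \ref{L:ACmeasure}. The paper averages $|z_k|\,\sigma$ over all $d$ coordinates so that the resulting density is bounded below on $\bS_d$ (sidestepping the need to check that $\{z_1=0\}$ is $\sigma$-null), and it concludes by testing against the Cauchy-kernel functionals $\alpha_\lambda$ and pointwise convergence rather than against all of $L^1(\bS_d,\sigma)$, but these are cosmetic differences.
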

\begin{proof}
	Because $\rC(\mathbb{S}_d)$ is commutative, it suffices to verify analyticity of the triple. 
 For $1\leq k\leq d$ we define a non-zero bounded linear functional $\psi_k: \rC(\bS_d)\to \bC$ as
\[
\psi_k(f)=\int_{\bS_d}fz_k d\sigma, \quad f\in \rC(\bS_d).
\]
By the rotation invariance of the measure $\sigma$, there is a number $c\geq 1$ such that $\|\psi_k\|=1/c$ for all $1\leq k\leq d$. 
Put $\psi'_k=c\psi_k$. 
Note then, for each $1\leq k\leq d$ and each $f\in \rC(\bS_d)$, that
\[
|\psi'_k|(f)=c \int_{\bS_d}f|z_k|d\sigma, 
\]
by virtue of Lemma \ref{L:absvalmeasure}.
It follows from \eqref{Eq:Cauchy} that $\psi_k\in \rA(\bB_d)^\perp$, so that $|\psi'_k|\in \Delta$. 
Let 
$
\delta= \frac{1}{d}\sum_{k=1}^d |\psi'_k|.
$
Then, $\delta\in \Delta$ 
and 
\[
\delta(f)=\int_{\bS_d} f\cdot\left( \frac{c}{d}\sum_{k=1}^d  |z_k|\right)d\sigma,\quad f\in\rC(\bS_d).
\]
On the sphere $\bS_d$, we have
\[
\frac{c}{d} \leq \frac{c}{d}\sum_{k=1}^d  |z_k|\leq \frac{c}{\sqrt{d}}
\]
whence the measure $\left( \frac{c}{d}\sum_{k=1}^d  |z_k|\right)\sigma$ is mutually absolutely continuous with $\sigma$. By Lemma \ref{L:ACmeasure}, this implies that the state $\rho: \rC(\bS_d)\to\bC$ defined as
\[
\rho(f)=\int_{\bS_d}f d\sigma, \quad f\in \rC(\bS_d)
\]
is absolutely continuous with respect to $\delta$, so that $\rho\in \AC(\Delta)$. 
For each $\lambda\in \bB_d$, we now define $\alpha_\lambda:  \rC(\bS_d)\to \bC$ as
\[
\alpha_\lambda(f)=\int_{\bS_d}\frac{f(\zeta)}{(1-\langle\zeta,\lambda \rangle)^d}d\sigma(\zeta), \quad f\in \rC(\bS_d).
\]
By \eqref{Eq:Cauchy}, we see that
\[
\alpha_\lambda(f)=f(\lambda), \quad f\in \AB.
\]
On the other hand, another application of Lemma \ref{L:ACmeasure}  reveals that, for each $\lambda\in \bB_d$, we have $\alpha_\lambda\ll\rho$, 
so that $\alpha_\lambda\in \AC(\Delta)$. 
Thus, if  a bounded net  $(f_i)$ in $\rA(\bB_d)$ satisfies
\[
\lim_i \alpha(f_i)=0, \quad \alpha\in \AC(\Delta) 
\]
necessarily the net $(f_i)$ converges pointwise to $0$ on $\bB_d$. Equivalently, the net $(f_i)$ converges to $0$ in the weak-$*$ topology of $L^\infty(\bS_d,\sigma)$, as discussed before the lemma. We conclude that the required triple is indeed analytic.
\end{proof}

Next, we move on to the identification of the set $\Band$. We start with a classical result.

\begin{theorem}[Henkin]\label{T:classHenkin}
The set  $\Band$ is both a left band and a right band.
\end{theorem}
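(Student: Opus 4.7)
The plan is to collapse the two band conditions into one by exploiting commutativity of $\fT = \rC(\bS_d)$, and then to invoke the classical Cole--Range theorem recalled in the introduction. First I would note that, in this commutative setting, Lemma \ref{L:ACmeasure} identifies functional absolute continuity with the classical measure-theoretic notion, while Theorem \ref{T:RN} then gives $\phi \ll \psi \Leftrightarrow \phi^\dagger \ll \psi^\dagger$. Consequently a subset of $\fT^*$ is a left band if and only if it is a right band, and the task is reduced to showing that $\Band$ is a left band.

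Next, I would pick $\mu \in \Band$ and $\nu \in \fT^*$ with $\nu \ll \mu$. Invoking Lemma \ref{L:ACmeasure} once more and applying the Radon--Nikodym theorem, we can write $d\nu = g\,d\mu$ for some $g \in L^1(|\mu|)$. Since $\Band$ is norm-closed in $\fT^*$ and $\rC(\bS_d)$ is norm-dense in $L^1(|\mu|)$, the problem further reduces to showing $h\mu \in \Band$ for every $h \in \rC(\bS_d)$. If $h \in \AB$ this is immediate: for any bounded net $(a_i) \subset \AB$ tending to $0$ in the weak-$*$ topology of $L^\infty(\bS_d,\sigma)$, the net $(a_i h)$ remains a bounded net in $\AB$ tending to $0$ weak-$*$ (since multiplication by $h \in L^\infty(\sigma)$ is weak-$*$ continuous), and hence $\int a_i\,d(h\mu) = \int a_i h\,d\mu \to 0$ by the assumption that $\mu \in \Band$.

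The main obstacle is general $h \in \rC(\bS_d)$, for then $a_i h$ need not lie in $\AB$ and the Henkin property of $\mu$ is no longer directly available. To overcome this I would invoke the classical Cole--Range theorem recalled in the introduction: a measure is Henkin relative to $L^\infty(\bS_d, \sigma)$ precisely when it is absolutely continuous with respect to some regular Borel probability measure $\rho$ on $\bS_d$ representing evaluation at the origin for $\AB$. Fixing such a $\rho$ for the given Henkin $\mu$, the trivial bound $|h\mu| \leq \|h\|_\infty |\mu|$ yields $h\mu \ll \mu \ll \rho$, and a second application of Cole--Range gives $h\mu \in \Band$, completing the reduction.
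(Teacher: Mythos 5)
Your argument is correct, and it shares the paper's basic strategy: use Lemma \ref{L:ACmeasure} to translate functional absolute continuity into classical absolute continuity of measures on $\bS_d$, note via Theorem \ref{T:RN} that in the commutative setting the left and right band conditions coincide, and then appeal to a classical theorem from \cite{rudin2008}. The difference lies in which classical input is used. The paper's proof is a two-line citation of Henkin's theorem \cite[Theorem 9.3.1]{rudin2008}, which is precisely the statement that the Henkin measures form a band ($\nu\ll\mu$ with $\mu$ Henkin implies $\nu$ Henkin), so nothing further is required. You instead invoke the Cole--Range theorem \cite[Theorem 9.6.1]{rudin2008}, a strictly stronger result (in Rudin's development Henkin's theorem is one of the ingredients leading to Cole--Range). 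This is legitimate provided you mean the external classical theorem and not the paper's Theorem \ref{T:crAB}, whose proof here depends on the very statement you are proving; your phrasing (``recalled in the introduction'') makes clear you mean the former, so there is no circularity. One structural remark: once you commit to Cole--Range, your entire middle reduction --- Radon--Nikodym, norm-density of $\rC(\bS_d)$ in $L^1(|\mu|)$, and the separate treatment of $h\in\AB$ --- is superfluous, since for arbitrary $\nu\ll\mu$ you already have $\nu\ll\mu\ll\rho$ and hence $\nu\in\Band$ directly. That reduction would only earn its keep if the general case $h\in\rC(\bS_d)$ could be handled without Cole--Range, but as written it cannot, so the detour buys nothing.
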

\begin{proof}
This is an immediate consequence of Lemma \ref{L:ACmeasure} and of Henkin's theorem \cite[Theorem 9.3.1]{rudin2008}.
\end{proof}

We are now in a position to apply our findings from Section \ref{S:ncHenkin}.

\begin{corollary}\label{C:crAB}
We have
$
\Band=\Band^\dagger=\AC(\Delta)=\AC(\Delta_*).
$
\end{corollary}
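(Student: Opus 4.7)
The plan is to show that this corollary is a direct consequence of the machinery already developed, specifically part (iii) of Theorem \ref{T:Henband}, once the two hypotheses of that theorem have been verified in the present setting.

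First, I would observe that Lemma \ref{L:ABanal} establishes that the triple $\AB \subset \rC(\bS_d) \subset L^\infty(\bS_d,\sigma)$ is both analytic and coanalytic. This takes care of the topological/approximation hypothesis needed to invoke Theorem \ref{T:Henband}(iii).

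Next, I would appeal to Theorem \ref{T:classHenkin}, which asserts that the classical Henkin band $\Band$ is simultaneously a left band and a right band. Here one uses the fact that in the commutative setting $\rC(\bS_d)$ our notion of absolute continuity between functionals coincides with the measure-theoretic one (Lemma \ref{L:ACmeasure}), so Henkin's classical theorem on absolute continuity of measures translates directly into the required band property on both sides.

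With both hypotheses verified, Theorem \ref{T:Henband}(iii) yields the desired chain of equalities $\Band = \Band^\dagger = \AC(\Delta) = \AC(\Delta_*)$ with no further work. The only potential subtlety is checking that the notation $\Band$ used in the statement of Theorem \ref{T:classHenkin} coincides with the one in the corollary, but this is immediate from the definitions, since the commutativity of $\rC(\bS_d)$ renders the asymmetries between left and right versions of absolute continuity harmless for verifying the band conditions. There is no real obstacle here; the corollary is essentially a bookkeeping consequence of the general theorem combined with the classical result.
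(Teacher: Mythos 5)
Your proposal matches the paper's proof exactly: it cites Lemma \ref{L:ABanal} for analyticity and coanalyticity of the triple, Theorem \ref{T:classHenkin} for the left and right band properties, and then invokes Theorem \ref{T:Henband}(iii) to conclude. No gaps; this is the same argument.
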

\begin{proof}
This follows upon combining Theorem \ref{T:Henband} with Lemma \ref{L:ABanal} and Theorem \ref{T:classHenkin}. 
\end{proof}

It is interesting to contrast the previous result with the following classical fact, which combines work of Glicksberg--K\"onig--Seever \cite{glicksberg1967}, Henkin \cite{henkin1968}, Valskii   \cite{valskii1971} and Cole--Range  \cite{cole1972}. Let  $\mathscr{R}_0$ be the set of states $\mu:\rC(\bS_d)\to\bC$ with the property that
\[
\mu(f)=f(0), \quad f\in \AB.
\]

\begin{theorem}\label{T:crAB}
We have that $\Band=\Band^\dagger=\AC(\mathscr{R}_0).$
\end{theorem}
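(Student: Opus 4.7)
The strategy is to reduce the claim to the classical Cole--Range theorem recalled in the introduction, by translating measure-theoretic absolute continuity into functional absolute continuity via Lemmas \ref{L:absvalmeasure} and \ref{L:ACmeasure}. Since Corollary \ref{C:crAB} already gives $\Band=\Band^\dagger$, it suffices to prove $\Band=\AC(\mathscr{R}_0)$.

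For the inclusion $\AC(\mathscr{R}_0)\subseteq \Band$, I would first show that every $\rho\in \mathscr{R}_0$ is itself in $\Band$. As was recalled just before Lemma \ref{L:ABanal}, a bounded net $(f_i)$ in $\AB$ converges to $0$ in the weak-$*$ topology of $L^\infty(\bS_d,\sigma)$ if and only if it converges pointwise to $0$ on $\bB_d$. Thus $\rho(f_i)=f_i(0)\to 0$, so $\rho|_{\AB}\in \Hen_{L^\infty(\bS_d,\sigma)}(\AB)$ and $\mathscr{R}_0\subseteq \Band$. Since $\Band$ is a left band by Theorem \ref{T:classHenkin}, this yields $\AC(\mathscr{R}_0)\subseteq \Band$.

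For the reverse inclusion, fix $\phi\in \Band$. By Riesz representation there exists a complex regular Borel measure $\mu$ on $\bS_d$ with $\phi=\phi_\mu$, and the hypothesis $\phi\in \Band$ amounts to saying that $\mu$ is a Henkin measure in the classical sense. The classical Cole--Range theorem stated in the introduction then furnishes a regular Borel probability measure $\rho$ on $\bS_d$ representing evaluation at the origin on $\AB$ such that $\mu\ll \rho$. Under Riesz representation $\rho$ corresponds to a state $\phi_\rho\in \mathscr{R}_0$, and by Lemma \ref{L:ACmeasure} we have $\phi_\mu\ll \phi_\rho$, whence $\phi\in \AC(\mathscr{R}_0)$.

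The mathematical substance of the proof is essentially delegated to the classical Cole--Range theorem, so there is no serious obstacle. The work is organizational, consisting mainly of verifying that $\mathscr{R}_0$ corresponds under Riesz representation precisely to the set of classical representing probability measures for evaluation at the origin (which follows from the standard identification of states on $\rC(\bS_d)$ with regular Borel probability measures), and that our functional-theoretic notion of absolute continuity matches the classical measure-theoretic one, which is exactly the content of Lemma \ref{L:ACmeasure}.
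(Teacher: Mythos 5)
Your proof is correct and follows essentially the same route as the paper, which likewise deduces the result from Corollary \ref{C:crAB}, Lemma \ref{L:ACmeasure}, and the classical Cole--Range theorem (Theorem 9.6.1 in Rudin). The only cosmetic difference is that you derive the inclusion $\AC(\mathscr{R}_0)\subseteq\Band$ from $\mathscr{R}_0\subseteq\Band$ together with the left band property of Theorem \ref{T:classHenkin}, whereas the paper obtains both inclusions directly from the ``if and only if'' in the classical theorem; both are valid.
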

\begin{proof}
This follows from Lemma \ref{L:ACmeasure} and Corollary \ref{C:crAB} along with \cite[Theorem 9.6.1]{rudin2008}.
\end{proof}

In light of Corollary \ref{C:crAB} and Theorem \ref{T:crAB}, we infer that
\[
\AC(\Delta)=\AC(\Delta_*)=\AC(\mathscr{R}_0).
\]
In turn, this can be translated into purely measure theoretic terms via Lemma \ref{L:ACmeasure}. We do not know of a direct elementary proof of this fact.

\subsection{Algebras of multipliers on the ball}\label{SS:AH}
Let $d\geq 1$ be an integer and let $\H$ be a reproducing kernel Hilbert space on $\bB_d$, with kernel function $k:\bB_d\times \bB_d\to \bC$. We always assume that there exists a sequence $(a_n)$ of positive numbers such that $a_0=1$ and
\[
k(z,w)=\sum_{n=0}^\infty a_n \langle z,w \rangle^n,\quad z,w\in\bB_d.
\]
In addition, we make the standing assumption that
\[
\lim_{n\to\infty}\frac{a_n}{a_{n+1}}=1.
\]
Often, these conditions are summarized by saying that $\H$ is a \emph{regular unitarily invariant space}. Examples of such spaces include the Hardy and Bergman spaces on the ball, the Dirichlet space on the disc, and the Drury--Arveson space on the ball  \cite{hartz2017isom}.

It follows from \cite[Corollary 4.4]{GHX04} that for each polynomial $p\in \bC[z_1,\ldots,z_d]$, the corresponding multiplication operator $M_p:\H\to\H$ is bounded.  Accordingly, we denote by $\A(\H)\subset B(\H)$ the closure of the polynomial multipliers. We also put $\fT(\H)=\rC^*(\A(\H))\subset B(\H)$. It follows from \cite[Theorem 4.6]{GHX04} that $\fT(\H)$ always contains the ideal  $\fK$ of compact operators on $\H$, and that there is a surjective $*$-homomorphism $q:\fT(\H)\to \rC(\bS_d)$ with kernel $\fK$ such that 
\[
q(M_f)=f, \quad f\in \A(\H).
\]
We will require the following standard facts numerous times below.

\begin{lemma}\label{L:fz}
Let $\H$ be a regular unitarily invariant space on $\bB_d$. Then, there is a central projection $\fz\in \fT(\H)^{**}$ with the following properties.
\begin{enumerate}[{\rm (i)}]
\item We have $\fK^{\perp\perp}=\fT(\H)^{**}(I-\fz)$ and $\rC(\bS_d)^{**}\cong \fT(\H)^{**}\fz$.
\item Let $\phi:B(\H)\to\bC$ be a bounded linear functional. Then, $\phi$ is weak-$*$ continuous if and only if $\phi=\widehat\phi(\cdot (I-\fz))$. 
\item There is a weak-$*$ homeomorphic $*$-isomorphism $\Omega: \fT(\H)^{**}(I-\fz)\to B(\H)$ such that
\[
\Omega(t(I-\fz))=t, \quad t\in \fT(\H).
\]
\end{enumerate}
\end{lemma}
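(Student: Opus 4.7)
The key input is the short exact sequence
\[
0 \to \fK \to \fT(\H) \to \rC(\bS_d) \to 0
\]
guaranteed by \cite[Theorem 4.6]{GHX04}. The plan is to extract the central projection $\fz$ from this sequence via standard bidual theory, and to then identify the two corners of the resulting direct-sum decomposition of $\fT(\H)^{**}$ with $\rC(\bS_d)^{**}$ and $B(\H)$ respectively.

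For (i), I would first observe that $\fK^{\perp\perp}$ is a weak-$*$ closed two-sided ideal of $\fT(\H)^{**}$; indeed, for $\phi\in\fK^\perp$ and $t\in\fT(\H)$, both $\phi(t\,\cdot)$ and $\phi(\cdot\,t)$ annihilate $\fK$ because $\fK$ is an ideal in $\fT(\H)$, and this pushes through to elements of $\fT(\H)^{**}$ by weak-$*$ density. Any weak-$*$ closed two-sided ideal in a von Neumann algebra is the range of a central projection (it is a weak-$*$ closed right ideal, hence of the form $M(I-p)$ for a projection $p$ by \cite[Proposition 1.10.1]{sakai1971}, and the two-sided property forces $p$ to be central). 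This yields the desired $\fz$ with $\fK^{\perp\perp}=\fT(\H)^{**}(I-\fz)$. The standard exactness of the bidual functor gives $\fT(\H)^{**}/\fK^{\perp\perp}\cong(\fT(\H)/\fK)^{**}\cong\rC(\bS_d)^{**}$, hence $\fT(\H)^{**}\fz\cong\rC(\bS_d)^{**}$.

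For (iii), consider the inclusion $\iota:\fT(\H)\hookrightarrow B(\H)$ and its normal extension $\iota^{**}:\fT(\H)^{**}\to B(\H)$. Its image is a von Neumann subalgebra containing $\fK$; since $\fK$ acts irreducibly on $\H$, the image is all of $B(\H)$. To pin down the kernel, I would argue that $\fK^{\perp\perp}\cong\fK^{**}$ is a factor, because $\fK$ is simple with (up to unitary equivalence) a single irreducible representation, namely the identity on $\H$. Therefore the restriction $\iota^{**}|_{\fK^{\perp\perp}}$ is a nonzero $*$-homomorphism out of a factor, hence injective, hence a weak-$*$ homeomorphic $*$-isomorphism onto $B(\H)$. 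Setting $\Omega=\iota^{**}|_{\fK^{\perp\perp}}$ and noting that $\iota^{**}(I-\fz)$ must equal $I\in B(\H)$ (as it is a central projection whose compression $\iota^{**}|_{\fT(\H)^{**}(I-\fz)}$ is already surjective onto $B(\H)$), we obtain $\Omega(t(I-\fz))=\iota^{**}(t)\,\iota^{**}(I-\fz)=t$ for every $t\in\fT(\H)$.

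Part (ii) then follows from (iii). A weak-$*$ continuous $\phi:B(\H)\to\bC$ pulls back through $\Omega$ to a weak-$*$ continuous functional on $\fT(\H)^{**}(I-\fz)$, which when extended by zero on the complementary $\fT(\H)^{**}\fz$ gives a weak-$*$ continuous functional on $\fT(\H)^{**}$ whose restriction to $\fT(\H)$ coincides with $\phi|_{\fT(\H)}$; uniqueness of the weak-$*$ continuous extension forces this to be $\widehat{\phi|_{\fT(\H)}}$, yielding the claimed formula. The converse is immediate since $\Omega^{-1}$ is weak-$*$ continuous. The principal obstacle in the whole argument is the factoriality of $\fK^{**}$ used in (iii); once this identification is secured, everything else is routine bookkeeping involving the universal property of the bidual.
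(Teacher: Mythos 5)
Your argument for parts (i) and (iii) is essentially sound and, for (iii), takes a genuinely different route from the paper's. The paper first establishes (ii) by citing the standard normal/singular decomposition of $B(\H)^*$ relative to $\fK$, and only then proves (iii) by showing that $\Phi(x)=x(I-\fz)$ is an isometric, weak-$*$ continuous $*$-homomorphism of $B(\H)$ onto $\fT(\H)^{**}(I-\fz)$ and setting $\Omega=\Phi^{-1}$. You instead build $\Omega$ directly as the restriction to $\fK^{\perp\perp}$ of the normal extension of the identity representation, deducing injectivity from the fact that $\fK^{\perp\perp}\cong\fK^{**}\cong B(\H)$ is a factor; this is self-contained and avoids the external citation. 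One caveat: simplicity is not the operative hypothesis here --- a simple $\rC^*$-algebra can have a very non-factorial bidual --- what you actually need is that every nondegenerate representation of $\fK$ is a multiple of the identity representation, equivalently the canonical identification $\fK^{**}\cong B(\H)$.

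The genuine gap is in your deduction of (ii) from (iii). First, for the formula $\phi=\widehat\phi(\cdot\,(I-\fz))$ with $\phi\in B(\H)^*$ to make sense, $\widehat\phi$ must be the weak-$*$ continuous extension to $B(\H)^{**}$ and $\fz$ must be central in $B(\H)^{**}$, whereas your construction in (i) only yields centrality in $\fT(\H)^{**}$. This is repairable --- $\fK$ is also an ideal of $B(\H)$, so its weak-$*$ closure in $B(\H)^{**}$ is a two-sided ideal there and $I-\fz$ is central in $B(\H)^{**}$; the paper works in $B(\H)^{**}$ from the outset for exactly this reason. Second, your forward direction only identifies $\widehat{\phi|_{\fT(\H)}}$ on $\fT(\H)^{**}$, and your converse (``immediate since $\Omega^{-1}$ is weak-$*$ continuous'') silently uses the identity $\Omega^{-1}(x)=x(I-\fz)$ for \emph{every} $x\in B(\H)$, not just for $x\in\fT(\H)$. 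That identity is true, but it needs an argument: if $\kappa:B(\H)^{**}\to B(\H)$ denotes the normal extension of the identity representation of $B(\H)$, then $\kappa$ restricts to your $\iota^{**}$ on $\fT(\H)^{**}$ by uniqueness of normal extensions, and $\kappa(x(I-\fz))=x\,\kappa(I-\fz)=x$. Only with this in hand does $\phi=\widehat\phi\circ\Omega^{-1}$ exhibit $\phi$ as a composition of weak-$*$ continuous maps and give weak-$*$ continuity on all of $B(\H)$.
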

\begin{proof}
Consider the weak-$*$ continuous surjective $*$-homomorphism $q^{**}:\fT(\H)^{**}\to \rC(\bS_d)^{**}$. We find $\ker q^{**}=\fK^{\perp\perp}$, which is a weak-$*$ closed two-sided ideal in $B(\H)^{**}$, so by \cite[Proposition 1.10.5]{sakai1971} there is a central projection $\fz\in B(\H)^{**}$ with the property that 
\[
\fK^{\perp\perp}=B(\H)^{**}(I-\fz)=\fT(\H)^{**}(I-\fz).
\]
We find
\[
\rC(\bS_d)^{**}=q^{**}(\fT(\H)^{**})\cong \fT(\H)^{**}/\ker q^{**}=\fT(\H)^{**}\fz.
\]
This establishes statement (i). Statement (ii) follows from the discussion preceding  \cite[Lemma 2.5]{CTh2020fdim}. 

To see (iii), let $\Phi:B(\H)\to B(\H)^{**}(I-\fz)$ be the $*$-homomorphism defined as
\[
\Phi(x)=x(I-\fz),\quad x\in B(\H).
\]
It follows readily from (ii) that $\Phi$ is weak-$*$ continuous and isometric. Thus, $\Phi$ is a weak-$*$ homeomorphism onto $\Phi(B(\H))$ \cite[Theorem A.2.5]{BLM2004}. We conclude  that \[\Phi(B(\H))= B(\H)^{**}(I-\fz)=\fT(\H)^{**}(I-\fz)\] so we may then simply take $\Omega=\Phi^{-1}$.
\end{proof}

We now analyze the triple
\[
\A(\H)\subset \fT(\H)\subset B(\H)
\]
similarly to what was done for the ball algebra in the previous subsection. Before proceeding, we note that by choosing $\H$ to be the Hardy space on the ball, then $\A(\H)$ can be identified completely isometrically with $\AB$. However, the corresponding $\rC^*$-algebra $\fT(\H)$ is not commutative, so that even in this case the investigation that follows is of a rather different nature than what was done previously.

First, we note that \cite[Lemma 3.1]{BHM2018} yields
\begin{equation}\label{Eq:Henkinext}
\Hen_{B(\H)}(\A(\H))=\E_{B(\H)}(\A(\H)).
\end{equation}
Next, we investigate the analyticity and coanalyticity of our triple of interest.

\begin{lemma}\label{L:analAH}
Let $\H$ be a regular unitarily invariant space on $\bB_d$. Then, the triple
$
\A(\H)\subset \fT(\H)\subset B(\H)
$
is both analytic and coanalytic.
\end{lemma}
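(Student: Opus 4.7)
The plan is to verify the sufficient conditions furnished by Propositions \ref{P:anal} and \ref{P:coanal}. The setup already records that $\fT(\H)$ contains the ideal $\fK$ of compact operators on $\H$, so both conditions reduce to exhibiting appropriate subsets of $\H$.

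For the analyticity half, I take $\Gamma$ to be the linear span of the reproducing kernel functions $\{k_\lambda:\lambda\in\bB_d\}$, which is norm-dense in $\H$ by standard reproducing kernel Hilbert space theory. For any finite combination $v=\sum_{i=1}^n c_i k_{\lambda_i}$, the identity $M_p^* k_\lambda=\overline{p(\lambda)}\,k_\lambda$ for every polynomial $p$ forces $\clos\spn\{M_p^* v:p\in\bC[z_1,\ldots,z_d]\}$ to sit inside the finite-dimensional subspace $\spn\{k_{\lambda_1},\ldots,k_{\lambda_n}\}\neq\H$. Hence $\Gamma$ is a dense set of non-cyclic vectors for $\{a^*:a\in\A(\H)\}$, and Proposition \ref{P:anal} yields analyticity of the triple.

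For the coanalyticity half, I take $\Gamma=\{f\in\H:f(0)=0\}$, the closed hyperplane kernel of the continuous functional $f\mapsto f(0)$. Since $(pf)(0)=p(0)f(0)=0$ whenever $f\in\Gamma$, the subspace $\Gamma$ is $\A(\H)$-invariant, so every element of $\Gamma$ is non-cyclic. The constant function $1$ equals $k_0$ (because $a_0=1$), and accordingly the one-dimensional orthogonal complement $\H\ominus\Gamma$ equals $\bC\cdot 1$. Given $h=c\cdot 1\in\H\ominus\Gamma$, I set $h'=c\sqrt{a_1}\,z_1\in\Gamma$, which is well-defined and nonzero since the hypotheses guarantee $a_1>0$ and $z_1\in\H$ with $\|z_1\|^2=1/a_1$. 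The key computation is the identity $\langle ph',h'\rangle=|c|^2\,p(0)$ for every polynomial $p$: the form $k(z,w)=\sum_n a_n\langle z,w\rangle^n$ implies that monomials of distinct total degree are orthogonal in $\H$, so upon expanding $ph'$ only the constant term of $p$ contributes to the inner product with $z_1$. Comparing with $\langle ph,h\rangle=|c|^2\,p(0)$, which comes directly from the reproducing property at $0$, yields $\langle ah,h\rangle=\langle ah',h'\rangle$ first for polynomials $a$ and then, by density and continuity, for all $a\in\A(\H)$. Proposition \ref{P:coanal} then gives coanalyticity.

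The principal obstacle is the coanalytic step: exhibiting a single $h'\in\Gamma$ whose ``moment sequence'' $a\mapsto\langle ah',h'\rangle$ matches that of $h$. The concrete choice $h'=c\sqrt{a_1}\,z_1$ works precisely because of the structural features of a regular unitarily invariant space, namely $a_1>0$ (so $z_1\in\H$) and the orthogonality of monomials of distinct total degree (a consequence of the rotation invariance built into the kernel). Without both of these ingredients, one would have to search further within $\Gamma$ for a vector with the desired reproducing behaviour at the origin.
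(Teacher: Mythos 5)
Your proof is correct and follows essentially the same route as the paper: the same dense span of kernel vectors handles analyticity via Proposition \ref{P:anal}, and the same invariant hyperplane $\{f\in\H: f(0)=0\}=(\bC k_0)^\perp$ handles coanalyticity via Proposition \ref{P:coanal}, with $z_1$ serving as the matching vector. The only difference is cosmetic: the paper quotes the identity $\langle fk_0,k_0\rangle=a_1\langle fz_1,z_1\rangle$ from the literature, whereas you derive it directly from the orthogonality of monomials of distinct total degree, which is a perfectly valid substitute.
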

\begin{proof}
For each $\lambda\in \bB_d$, let $k_\lambda\in \H$ denote the corresponding kernel vector. Consider the set
\[
\Gamma_1=\spn\{k_\lambda:\lambda\in \bB_d\}
\]
which is dense in $\H$. We know that that $M_f^* k_\lambda=\ol{f(\lambda)}k_\lambda$ for every $f\in \A(\H)$. In particular, this implies that if $h\in \Gamma_1$, then the subspace $\{M_f^* h:f\in \A(\H)\}$ is finite-dimensional and hence is a proper closed subset of $\H$. 
Next, let $\Gamma_2=(\bC k_0)^\perp$. This is a closed and proper subspace of $\H$ which is invariant under $\A(\H)$, so that no vector in $\Gamma_2$ can be cyclic for $\A(\H)$. Furthermore, we see that $z_1\in \Gamma_2$ and by \cite[Proposition 4.3]{GHX04} we obtain
\[
\langle fk_0, k_0\rangle=a_1 \langle fz_1,z_1\rangle, \quad f\in \A(\H).
\]
We can now apply Propositions \ref{P:anal} and \ref{P:coanal} to see that the triple
\[
\A(\H)\subset \fT(\H)\subset B(\H)
\]
is both analytic and coanalytic.
\end{proof}

Even though the $\rC^*$-algebra $\fT(\H)$ is non-commutative, we still obtain an analogue of Theorem \ref{T:classHenkin}.

\begin{theorem}\label{T:HenAH}
Let $\H$ be a regular unitarily invariant space on $\bB_d$. Then, the set  $\Band$
is both a left band and a right band.
\end{theorem}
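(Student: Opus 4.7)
The strategy is to decompose functionals in $\fT(\H)^*$ via the central projection $\fz$ of Lemma \ref{L:fz}, reducing the band question for $\Band$ to an analogous question for a set of measures on $\bS_d$, which is then handled using the classical Henkin theorem (Theorem \ref{T:classHenkin}).

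By Lemma \ref{L:fz}, every $\phi \in \fT(\H)^*$ decomposes uniquely as $\phi = \tau_\phi + \mu_\phi \circ q$, where $\tau_\phi(t) = \widehat\phi((I-\fz)t)$ is (via $\Omega$) the restriction of a weak-$*$ continuous functional on $B(\H)$, and $\mu_\phi \in \rC(\bS_d)^*$ is the unique functional satisfying $\widehat\phi(\fz t) = \mu_\phi(q(t))$. Since $\tau_\phi|_{\A(\H)}$ is automatically Henkin, one has $\phi \in \Band$ if and only if $\mu_\phi$ lies in
\[
\Band_\H := \bigl\{\mu \in \rC(\bS_d)^*: (\mu\circ q)|_{\A(\H)}\in \Hen_{B(\H)}(\A(\H))\bigr\}.
\]
Moreover, since $\tau_\phi$ and $\mu_\phi\circ q$ have supports beneath the orthogonal central projections $I-\fz$ and $\fz$, the polar decompositions are additive and the support projections orthogonal; consequently, $\phi \ll \beta$ is equivalent to the pair $\tau_\phi \ll \tau_\beta$ together with $\mu_\phi\circ q \ll \mu_\beta \circ q$, the latter equivalent (via Lemmas \ref{L:absvaluecomp} and \ref{L:ACmeasure}) to $\mu_\phi \ll \mu_\beta$ as measures on $\bS_d$. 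The subspace of weak-$*$ continuously extending functionals, characterised by having support beneath $I-\fz$, is trivially closed under absolute continuity.

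The left-band property of $\Band$ is thus reduced to the corresponding property for $\Band_\H \subset M(\bS_d)$, and the plan is to identify $\Band_\H$ with the classical Henkin band $\B$ on $\AB$. The inclusion $\B\subset \Band_\H$ is immediate: a bounded net $(a_i)$ in $\A(\H)$ with $a_i\to 0$ weak-$*$ in $B(\H)$ gives, via the contractive $q$, a bounded net $(q(a_i))$ in $\AB$ converging pointwise on $\bB_d$, hence weak-$*$ in $L^\infty(\bS_d,\sigma)$. For the reverse inclusion, given $\mu\in \Band_\H$ and a bounded $(f_i)$ in $\AB$ converging pointwise to zero, one considers the dilations $f_{i,r}(z)=f_i(rz)$, which lie in $\A(\H)$ with multiplier-norm bound $\|f_{i,r}\|_{\A(\H)}\leq C_r \|f_i\|_\AB$ for each $r\in(0,1)$; since $\mu\in \Band_\H$, one has $\int f_{i,r}\,d\mu\to 0$ as $i\to\infty$ for each fixed $r$, and a diagonal extraction $r_i\to 1^-$ yields $\int f_i\,d\mu\to 0$, so $\mu\in \B$.

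Combining $\Band_\H = \B$ with Theorem \ref{T:classHenkin} immediately gives the left-band property of $\Band$: for $\phi\ll\beta\in \Band$, one has $\mu_\phi\ll\mu_\beta\in\B$, hence $\mu_\phi\in\B=\Band_\H$, and thus $\phi\in\Band$. The right-band property follows by the identical argument, because $\rC(\bS_d)$ is commutative so absolute continuity for measures is symmetric under $\dagger$ (see the remark following Theorem \ref{T:RN}), and the $\fz$-decomposition respects $\dagger$. The main technical obstacle will be the inclusion $\Band_\H\subset\B$: in one variable the multiplier and sup norms coincide on $\AD$ and the result is essentially the F.\ and M.\ Riesz theorem, but for $d>1$ the multiplier norm is strictly stronger than the sup norm on $\AB$, and the diagonal extraction must carefully balance the growth of $C_r$ against the boundary regularity of $f_{i,r}-f_i$ in order to pass from $\A(\H)$-bounded control to $\AB$-bounded control.
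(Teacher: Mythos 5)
Your reduction via the central projection $\fz$ of Lemma \ref{L:fz} --- splitting $\phi$ into a weak-$*$ continuous part and a part $\mu_\phi\circ q$ factoring through $\rC(\bS_d)$, and checking that absolute continuity passes to the two summands via Lemmas \ref{L:absvaluecomp} and \ref{L:ACmeasure} --- is essentially the first half of the paper's proof. The fatal problem is your claimed identity $\Band_\H=\B$. The inclusion $\B\subset\Band_\H$ is correct, but the reverse inclusion $\Band_\H\subset\B$ is \emph{false} whenever $\H=H^2_d$ with $d\geq 2$: by \cite{hartz2018henkin} (quoted in Subsection \ref{SS:AH}) there is a regular Borel probability measure $\tau$ on $\bS_d$, singular with respect to every measure representing evaluation at the origin, for which $\phi_\tau\in\Band$; thus $\tau$ lies in your $\Band_{H^2_d}$, while $\tau\notin\B$ by the classical Cole--Range theorem (Theorem \ref{T:crAB}). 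So the difficulty you flag in your last sentence is not a technical obstacle to be overcome by balancing $C_r$ against boundary regularity --- the statement you are trying to prove is wrong, and for $\mu\in\Band_\H\setminus\B$ there is genuinely no uniform-in-$i$ control of $\int(f_i-f_{i,r})\,d\mu$ as $r\to1^-$. Indeed, the non-coincidence of $\A(\H)$-Henkin measures with classical Henkin measures is precisely the phenomenon (the failure of the conjecture of \cite{CD2016duality}) motivating this paper.

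What the paper does at this point instead is invoke \cite[Lemma 3.3]{BHM2018}, which asserts directly that the set of $\A(\H)$-Henkin measures on $\bS_d$ (your $\Band_\H$) is closed under absolute continuity of measures, \emph{without} identifying it with $\B$. If you replace your identification $\Band_\H=\B$ by that lemma, the remainder of your argument --- including the observation that the right-band case reduces to the left-band case on the commutative quotient via Theorem \ref{T:RN} --- goes through and coincides with the proof in the paper.
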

\begin{proof}
Fix bounded linear functionals $\phi$ and $\psi$ on $\fT(\H)$ and suppose $\psi\in\Band$. By Lemma \ref{L:fz}, there is a central projection $\fz\in B(\H)^{**}$ with the property that the functionals $\widehat\phi((I-\fz)\cdot)$ and $\widehat\psi((I-\fz)\cdot)$ both lie in $\Band$, while $\widehat\phi(\fz \cdot)$ and $\widehat\psi(\fz \cdot)$ both annihilate $\fK$. Thus, we may find bounded linear functionals $\phi'$ and $\psi'$ on $\rC(\bS_d)$ such that
\[
\widehat\phi(\fz \cdot)=\phi'\circ q \qand\widehat\psi(\fz \cdot)=\psi'\circ q.
\]
Furthermore, there are regular Borel measures $\mu$ and $\nu$ on $\bS_d$ such that
\[
\widehat \phi(\fz t)=(\phi'\circ q)(t)=\int_{\bS_d}q(t)d\mu \qand \widehat \psi(\fz t)=(\psi'\circ q)(t)=\int_{\bS_d}q(t)d\nu
\]
for every $t\in \fT(\H)$.

Assume now that $\phi\ll\psi$. Since $\fz$ is central, it follows from Lemma \ref{L:absvaluecomp} that the absolute values of $\widehat\phi(\fz \cdot)$ and $\widehat\psi(\fz\cdot)$ are given by $|\phi|(\fz \cdot)$ and $|\psi|(\fz\cdot)$ respectively.
We infer that $\widehat\phi(\fz\cdot)\ll\widehat\psi(\fz \cdot)$. By another application of Lemma \ref{L:absvaluecomp}, we obtain
\[
|\phi|(\fz \cdot)=|\phi'|\circ q^{**} \qand|\psi|(\fz \cdot)=|\psi'|\circ q^{**} 
\]
which implies that $\phi'\ll\psi'$. Then, Lemma \ref{L:ACmeasure} yields that $\mu\ll\nu$ as measures, so we may invoke \cite[Lemma 3.3]{BHM2018} to conclude that $\widehat\phi(\fz\cdot)\in \Band$. Thus, 
\[
\phi=\widehat\phi((I-\fz)\cdot)+\widehat\phi(\fz \cdot)\in\Band
\]
which shows that $\Band$ is a left band.

Finally, assume that $\phi^\dagger\ll\psi^\dagger$. As above, we find $\phi'^{\dagger}\ll \psi'^\dagger$. But $\rC(\bS_d)$ is commutative,  so Theorem \ref{T:RN}  implies that $\phi'\ll\psi'$. Arguing as in the previous paragraph, we find $\phi\in \Band$ so $\Band$ is a right band.
\end{proof}

We can now identify the set $\Band$.

\begin{corollary}\label{C:crAH}
Let $\H$ be a regular unitarily invariant space on $\bB_d$.
Then, we have
$
\Band=\Band^\dagger=\AC(\Delta)=\AC(\Delta_*).
$\end{corollary}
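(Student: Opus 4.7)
The plan is to observe that Corollary \ref{C:crAH} is a direct assembly of results already proved in the preceding subsection, and apply Theorem \ref{T:Henband}(iii). So the whole argument reduces to checking that the hypotheses of that theorem are in force for the triple $\A(\H)\subset \fT(\H)\subset B(\H)$.

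First, I would invoke Lemma \ref{L:analAH}, which supplies precisely the analyticity \emph{and} coanalyticity of the triple. Second, I would invoke Theorem \ref{T:HenAH}, which supplies the fact that $\Band$ is both a left band and a right band. With these two ingredients in hand, Theorem \ref{T:Henband}(iii) applies verbatim and yields the desired chain of equalities $\Band=\Band^\dagger=\AC(\Delta)=\AC(\Delta_*)$.

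There is no real obstacle here, since all of the genuine work has been done: the analytic/coanalytic verification rested on exhibiting non-cyclic vectors through kernel functions and the pairing $\langle f k_0,k_0\rangle = a_1\langle f z_1,z_1\rangle$, and the band property was obtained by reducing through the central projection $\fz$ of Lemma \ref{L:fz} to the commutative quotient $\rC(\bS_d)$, where Henkin's classical theorem and Lemma \ref{L:ACmeasure} take over. The only thing worth emphasizing in the write-up is the logical structure: Theorem \ref{T:Henband}(iii) gives an \emph{if and only if}, so the present corollary is precisely the ``if'' direction, already packaged.
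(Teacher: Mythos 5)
Your proposal is correct and matches the paper's own proof exactly: the corollary is deduced by combining Lemma \ref{L:analAH} (analyticity and coanalyticity of the triple) and Theorem \ref{T:HenAH} (the left and right band property) with Theorem \ref{T:Henband}(iii). Nothing further is needed.
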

\begin{proof}
This follows from Theorem \ref{T:Henband} along with Lemma \ref{L:analAH} and Theorem \ref{T:HenAH}.
\end{proof}

Motivated by the corresponding analysis that was carried out for the ball algebra, it is now natural to ask whether there is a version of the previous theorem that involves the set $\mathscr{R}_0$ of states $\rho$ on $\fT(\H)$ with the property that
\[
\rho(M_f)=f(0), \quad f\in \A(\H).
\]
More precisely, we wonder whether or not $\Band=\AC(\Ro)$. As mentioned in the introduction (see Question \ref{Q:crAH}), we do not know the answer.

It is worthwhile to discuss this question further in the important case of the Drury--Arveson space $H^2_d$ on $\bB_d$. Given a regular Borel measure $\mu$ on $\bS_d$, we define a bounded linear functional $\phi_\mu$ on $\fT(H^2_d)$ by
\[
\phi_\mu(t)=\int_{\bS_d}q(t)d\mu,\quad t\in \fT.
\]
For the sake of this discussion, define $\Ro^m$ to be the set of states $\rho\in \Ro$ for which there is a regular Borel probability measure $\mu$ such that $\rho=\phi_\mu$.  When $d\geq 2$, it was shown in \cite{hartz2018henkin} that there is a regular Borel probability measure $\tau$ on $\bS_d$, singular with respect to every measure defining a state in $\Ro^m$, for which $\phi_\tau\in \Band$. By Lemma \ref{L:ACmeasure}, we infer that $\phi_\tau\in \Band\setminus \AC(\Ro^m)$. It is tempting to guess then that $\phi_\tau\in \Band \setminus \AC(\Ro)$, thereby answering Question \ref{Q:crAH}. Unfortunately, we cannot prove this at present. The issue is that membership in $\AC(\Ro)$ is a priori easier to achieve than membership in $\AC(\Ro^m)$. The next example illustrates this phenomenon.

\begin{example}\label{E:measures}
Let $\Delta\subset \fT(H^2_d)^*$ denote the norm closure of the convex hull of $\{|\psi|:\psi\in \A(H^2_d)^\perp, \|\psi\|=1\}$. Furthermore, we let $\Theta\subset \rC(\bS_d)^*$ denote the norm closure of the convex hull of  $\{|\nu|:\nu\in \AB^\perp,\|\nu\|=1\}$. With $\tau$ as above, it follows from Corollary \ref{C:crAH} that $\phi_\tau\in \AC(\Delta)$, while Theorem \ref{T:crAB} implies that $\tau \notin \AC(\Theta)$. This may seem bizarre at first glance, as given a regular Borel measure $\mu$ on $\bS_d$, we have that $\phi_\mu\in \A(H^2_d)^\perp$ if and only if $\mu\in \AB^\perp$. The issue here is that the notion of absolute continuity in $\fT(H^2_d)^*$ is much more flexible than its counterpart in $\rC(\bS_d)^*$. We substantiate this claim by identifying explicitly an element $\psi\in \A(H^2_d)^\perp$ such that $\phi_\tau\ll\psi$.

Because $\phi_\tau\in \Band$, it follows from \eqref{Eq:Henkinext} that there is a weak-$*$ continuous linear functional $\omega:B(H^2_d)\to \bC$ such that if we put $\psi=\phi_\tau- \omega|_{\fT(H^2_d)}$, then $\psi\in \A(H^2_d)^\perp$. By Lemma \ref{L:fz}, there is a central projection $\fz\in \fT(H^2_d)^{**}$ such that $\fK^{\perp\perp}=\fT(H^2_d)^{**}(I-\fz)$ and $\omega=\widehat\omega(\cdot (I-\fz))$. By definition, $\phi_\tau$ annihilates $\fK$, so that $\phi_\tau=\widehat{\phi_\tau}(\cdot \fz )$. 
For $t\in \fT(H^2_d)$ we find
\begin{align*}
\phi_\tau(t)&=\widehat{\phi_\tau}(t \fz )=\widehat\psi(\fz t)+\widehat\omega(t\fz )=\widehat\psi(t\fz ).
\end{align*}
Thus, $\phi_\tau=\widehat\psi(\cdot \fz )$, whence  $\phi_\tau$ is absolutely continuous with respect to $\psi$ by Lemma \ref{L:ACmodule}. 
\qed
\end{example}

\subsection{Popescu's noncommutative disc algebra}
Let $d\geq 1$ be an integer and let $\bF_d^+$ denote the free monoid on $d$ generators. Let $\fF^2_d$ be the full Fock space on $\bC^d$, which can be identified with $\ell^2(\bF_d^+)$. For each $w\in \bF^+_d$, we let $e_w\in \fF^2_d$ denote the characteristic function of $\{w\}$, so that $\{e_w:w\in \bF_d^+\}$ is an orthonormal basis of $\fF^2_d$. For each $1\leq j\leq d$, we let $L_j\in B(\fF^2_d)$ be the isometry determined by
\[
L_j e_w=e_{jw}, \quad w\in \bF^+_d.
\]
More generally, given a word $u\in \bF^+_d$, we let $L_u\in B(\fF^2_d)$ be the isometry determined by
\[
L_u e_w=e_{uw}, \quad w\in \bF^+_d.
\]
The norm-closed unital subalgebra $\fA_d\subset B(\fF^2_d)$ generated by $\{L_1,\ldots,L_d\}$ is Popescu's \emph{noncommutative disc algebra} \cite{popescu1989},\cite{popescu1996}. We let $\fT_d=\rC^*(\fA_d)\subset B(\fF^2_d)$, and note that this $\rC^*$-algebra contains the ideal of compact operators on $\fF^2_d$ \cite[Theorem 1.3]{popescu2006}. In this subsection, we investigate the triple
\[
\fA_d\subset \fT_d\subset B(\fF^2_d)
\]
and the associated Henkin functionals. First, we note that 
\[
\Hen_{B(\fF^2_d)}(\fA_d)=\E_{B(\fF^2_d)}(\fA_d)
\]
by virtue of Proposition \ref{P:Esc} and the proof of \cite[Theorem 1.2]{davidson1999}. Adapting the proof of Lemma \ref{L:analAH}, we can now establish analyticity and coanalyticity of the triple.

\begin{lemma}\label{L:analAd}
The triple
$
\fA_d\subset \fT_d\subset B(\fF^2_d)
$
is both analytic and coanalytic. 
\end{lemma}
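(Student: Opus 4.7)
The plan is to verify Lemma \ref{L:analAd} by a direct application of Propositions \ref{P:anal} and \ref{P:coanal}, exactly as was done in Lemma \ref{L:analAH}. The main requirements are satisfied at once for basic structural reasons: $\fT_d=\rC^*(\fA_d)$ contains the ideal of compact operators on $\fF^2_d$ (this is the cited fact from \cite{popescu2006}), and $\fA_d$ is a unital norm-closed subalgebra of the unital $\rC^*$-algebra $\fT_d$. So all that remains is to exhibit the non-cyclicity data required by each proposition.

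For analyticity (via Proposition \ref{P:anal}), I would take
\[
\Gamma_1=\spn\{e_w:w\in\bF_d^+\},
\]
which is dense in $\fF^2_d$. The point is that the adjoints $L_j^*$ act as the \emph{backward shift}: $L_u^*e_w=e_{w'}$ if $w=uw'$ and $0$ otherwise. Hence, for any finite linear combination $h=\sum_{w\in F}c_w e_w$, the orbit $\{L_u^*h:u\in\bF_d^+\}$ lies in the finite-dimensional span of the suffixes of words in $F$. Since $\fF^2_d$ is infinite-dimensional, $h$ is not cyclic for $\{a^*:a\in\fA_d\}$, so Proposition \ref{P:anal} applies.

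For coanalyticity (via Proposition \ref{P:coanal}), I would take the subspace
\[
\Gamma_2=(\bC e_\emptyset)^\perp,
\]
which is closed and invariant under each $L_j$, hence under $\fA_d$; every vector in $\Gamma_2$ therefore has orbit in the proper subspace $\Gamma_2$ and so is non-cyclic for $\fA_d$. For the matching condition, note that $\fF^2_d\ominus\Gamma_2=\bC e_\emptyset$, so a generic element of the complement is $h=c e_\emptyset$. I would take $h'=ce_1\in\Gamma_2$ and check the identity on the generating set: for $u\in\bF_d^+$,
\[
\langle L_u h,h\rangle=|c|^2\langle e_u,e_\emptyset\rangle=|c|^2\delta_{u,\emptyset}=|c|^2\langle e_{u1},e_1\rangle=\langle L_u h',h'\rangle.
\]
Since the $L_u$ span a dense subspace of $\fA_d$, linearity and continuity give $\langle ah,h\rangle=\langle ah',h'\rangle$ for every $a\in\fA_d$, and Proposition \ref{P:coanal} then yields coanalyticity.

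The argument is essentially mechanical once the right vectors are identified, so I do not foresee a genuine obstacle; the only subtlety is selecting $h'$ for the coanalytic matching step, where the ``suffix-preserving'' action of $L_u$ on a basis vector $e_1$ kills all inner products with $e_1$ except at $u=\emptyset$, mimicking exactly the behaviour of the vacuum expectation.
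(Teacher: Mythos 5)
Your proposal is correct and follows essentially the same route as the paper: the paper's proof also takes $\Gamma_1=\spn\{e_w:w\in\bF_d^+\}$ (noting the backward-shift action forces finite-dimensional, hence proper, $\{a^*h:a\in\fA_d\}$-orbits) and $\Gamma_2=(\bC e_\varnothing)^\perp$ with the matching vector $e_1$ satisfying $\langle ae_\varnothing,e_\varnothing\rangle=\langle ae_1,e_1\rangle$, then invokes Propositions \ref{P:anal} and \ref{P:coanal}. Your explicit verification of the matching identity on the generators $L_u$ is just a slightly more detailed writeup of the same computation.
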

\begin{proof}
Consider the set
\[
\Gamma_1=\spn\{e_w:w\in \bF^+_d\}
\]
which is obviously dense in $\fF^2_d$. Given $h\in \Gamma_1$, there is an integer $N$ large enough so that  $L_w^* h=0$ for every $w\in \bF^+_d$ with $|w|\geq N$. We conclude that $\{a^* h:a\in \fA_d\}$ is a finite-dimensional subspace and in particular is a proper closed subset of $\fF^2_d$. Next, let $\Gamma_2=(\bC e_\varnothing)^\perp$. This is a proper closed subspace of $\fF^2_d$ which is invariant under $\fA_d$, so that no vector in $\Gamma_2$ can be cyclic for $\fA_d$. Furthermore, we see that $e_1\in \Gamma_2$ and
\[
\langle ae_\varnothing, e_\varnothing\rangle=\langle ae_1,e_1\rangle, \quad a\in \fA_d.
\]
We may now apply Proposition \ref{P:anal} and \ref{P:coanal} to find that the triple
\[
\fA_d\subset \fT_d\subset B(\fF^2_d)
\]
is indeed both analytic and coanalytic. 
\end{proof}

Up to this point, it may seem as though $\fA_d$ behaves like $\A(\H)$ for our purposes. The next step will exhibit a sharp difference. Before proceeding however, we need a preliminary result.

\begin{lemma}\label{L:riesz}
Let $\phi:\fT_d\to \bC$ be a bounded linear functional annihilating $\fA_d$. Then, the following statements hold.
\begin{enumerate}[{\rm (i)}]
\item  There is a  unital $*$-homomorphism $\pi:\fT_d\to B(\H)$ and two vectors $\xi,\eta\in \H$ such that
\[
\phi(t)=\langle \pi(t)\xi,\eta\rangle \qand |\phi^\dagger|(t)=\langle \pi(t)\xi,\xi \rangle
\]
for every $t\in \fT_d$. Furthermore, if we let $\M=\ol{\pi(\fA_d)\xi}$, then there is a unitary operator $U:\fF^2_d\to \M$ such that
\[
U a U^*=\pi(a)|_\M, \quad a\in \fA_d.
\]
%

\item For each fixed $s\in \fT_d$,  we have that $\phi(s\cdot)\in \Band$.
\end{enumerate}

%
%
%
\end{lemma}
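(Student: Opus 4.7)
For part (i), I plan to apply a GNS construction to the positive functional $|\phi^\dagger|$. Normalizing so that $\|\phi\|=1$, this is a state on $\fT_d$; let $(\pi,\H,\xi)$ be its GNS representation, with $\xi$ cyclic for $\pi(\fT_d)$ and $|\phi^\dagger|(t)=\langle \pi(t)\xi,\xi\rangle$. To recover $\phi$, I would use the polar decomposition $\widehat{\phi^\dagger}=|\phi^\dagger|(\cdot\, v_{\phi^\dagger})$ together with the canonical weak-$*$ continuous extension $\pi^{**}\colon \fT_d^{**}\to \pi(\fT_d)''\subset B(\H)$, and set $\eta=\pi^{**}(v_{\phi^\dagger})\xi$. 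The identity $\widehat{\phi}(t)=\overline{\widehat{\phi^\dagger}(t^*)}$ together with a brief manipulation with $\pi^{**}$ yields $\widehat{\phi}(t)=\langle \pi(t)\xi,\eta\rangle$; moreover $\|\eta\|^2=|\phi^\dagger|(\fs_{\phi^\dagger})=1$, and the hypothesis $\phi\in \fA_d^\perp$ forces $\eta\perp\M$.

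Constructing the unitary $U$ is the main challenge. The restrictions $V_j=\pi(L_j)|_\M$ satisfy $V_j^*V_i=\delta_{ij}I_\M$, so $(V_1,\ldots,V_d)$ is a row isometry on $\M$. I would invoke Popescu's Wold decomposition $\M=\M_{\rm p}\oplus \M_{\rm u}$, with a pure (Fock-type) summand and a Cuntz-type summand, and take $U$ to be the intertwiner sending $e_u\mapsto V_u w$, where $w$ is a unit generator of the wandering subspace $\W=\M\ominus \bigoplus_j V_j\M$. The inequality $\dim\W\le 1$ is immediate: since $\pi(\fA_d)\xi$ is dense in $\M$ and every $\pi(L_u)\xi$ with $|u|\ge 1$ lies in $\bigoplus_j V_j\M$, one has $\M=\bC\xi+\bigoplus_j V_j\M$, so the quotient $\M/\bigoplus_j V_j\M$ is spanned by the class of $\xi$. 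The genuine technical obstacles are to verify the purity $\M_{\rm u}=0$ and to show that $\xi\notin \bigoplus_j V_j\M$, so that $\dim\W=1$; I expect these to rely on structural features of the GNS picture, exploiting the orthogonality $\eta\perp\M$ together with the identities satisfied by $v_{\phi^\dagger}$ in the polar decomposition.

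For part (ii), I use (i) directly. Let $s\in\fT_d$ and suppose $(a_i)$ is a bounded net in $\fA_d$ with $a_i\to 0$ in the weak-$*$ topology of $B(\fF^2_d)$. By the representation formula from (i), $\phi(sa_i)=\langle \pi(s)\pi(a_i)\xi,\eta\rangle=\langle \pi(a_i)\xi,\pi(s)^*\eta\rangle$. Since $\xi\in\M$ and $\M$ is $\pi(\fA_d)$-invariant, the vector $\pi(a_i)\xi$ lies in $\M$, and by the intertwining property of $U$ equals $U(a_i U^*\xi)$. Setting $\xi_0=U^*\xi$ and $\eta_0=U^*P_\M\pi(s)^*\eta$ in $\fF^2_d$, we obtain $\phi(sa_i)=\langle a_i\xi_0,\eta_0\rangle$, which tends to $0$ by the weak-$*$ convergence of $(a_i)$ in $B(\fF^2_d)$. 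Hence $\phi(s\cdot)|_{\fA_d}\in\Hen_{B(\fF^2_d)}(\fA_d)$, that is, $\phi(s\cdot)\in\Band$.
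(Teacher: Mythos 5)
Your part (ii) is correct and is essentially the paper's own argument: writing $\phi(sa)=\langle a\,U^*\xi,\,U^*\pi(s)^*\eta\rangle$ exhibits $\phi(s\cdot)|_{\fA_d}$ as the restriction of a vector (hence weak-$*$ continuous) functional on $B(\fF^2_d)$, so it lies in $\Hen_{B(\fF^2_d)}(\fA_d)$. The elementary portions of your part (i) also check out: the GNS triple for $|\phi^\dagger|$, the choice $\eta=\pi^{**}(v_{\phi^\dagger})\xi$ recovering $\phi$, the orthogonality $\eta\perp\M$ forced by $\phi\in\fA_d^\perp$, the fact that $(V_1,\dots,V_d)=(\pi(L_1)|_\M,\dots,\pi(L_d)|_\M)$ is a row isometry on $\M$, and the estimate $\dim\W\le 1$ for the wandering subspace.

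However, there is a genuine gap exactly where you flag one: the purity of the Wold decomposition ($\M_{\mathrm u}=0$) and the non-degeneracy $\xi\notin\bigoplus_j V_j\M$ are not loose ends to be tidied up "from structural features of the GNS picture" --- they \emph{are} the content of statement (i). Without them there is no unitary $U$, and your proof of (ii) also collapses, since it uses $U$. These two claims are where the hypothesis $\phi\in\fA_d^\perp$ must do serious work, and they do not follow from the soft orthogonality $\eta\perp\M$: already for $d=1$ (the Toeplitz algebra), a functional induced by a singular measure on the circle yields a GNS representation in which the restricted isometry is unitary, i.e.\ lands entirely in the Cuntz summand, so ruling this out encodes an F.\ and M.\ Riesz--type absolute continuity theorem. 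This is why the paper does not prove (i) from scratch but instead cites Lemmas 2.1 and 4.3 of [CMT2021] (with the notational caveat that the absolute value there corresponds to $|\phi^\dagger|$ here). As written, your proposal is a correct skeleton for (ii) and for the routine parts of (i), but the decisive steps of (i) are asserted rather than proved.
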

\begin{proof}
Statement (i) follows readily from  \cite[Lemmas 2.1 and 4.3]{CMT2021}; we remark here for the benefit of the reader that the absolute value of $\phi$ found in \cite{CMT2021} corresponds to $|\phi^\dagger|$ in our current notation (see \cite[Subsection 2.1]{CMT2021} for details).

Fix now $s\in \fT_d$. We infer from (i) that
\[
\phi(sa)=\langle aU^*\xi,U^*\pi(s^*)\eta\rangle, \quad a\in \fA_d
\]
whence (ii) follows.
\end{proof}

We now return to the matter at hand.

\begin{theorem}\label{T:HenAd}
The set $\Band$ is a right band, but it is not a left band.
\end{theorem}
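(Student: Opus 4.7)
\textbf{Proof plan for Theorem \ref{T:HenAd}.}

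\emph{Right band.} The idea is to combine Lemma \ref{L:riesz}(ii) with the fact that $\Hen_{B(\fF^2_d)}(\fA_d)=\E_{B(\fF^2_d)}(\fA_d)$, noted before Lemma \ref{L:analAd}. Given $\beta\in\Band$, the Henkin condition lets us write $\beta=\omega|_{\fT_d}+\psi$ where $\omega$ is weak-$*$ continuous on $B(\fF^2_d)$ and $\psi\in\fA_d^\perp$. For any $t\in\fT_d$, left multiplication $x\mapsto tx$ is weak-$*$ continuous on $B(\fF^2_d)$, so $\omega(t\,\cdot)$ extends weak-$*$ continuously to $B(\fF^2_d)$ and its restriction to $\fT_d$ lies in $\Band$. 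Meanwhile $\psi(t\,\cdot)\in\Band$ is exactly the content of Lemma \ref{L:riesz}(ii). Hence $\beta(t\,\cdot)\in\Band$ for every $t\in\fT_d$. By Theorem \ref{T:RN}(ii), the condition $\phi^\dagger\ll\beta^\dagger$ is equivalent to $\phi$ lying in the norm closure of $\{\beta(t\,\cdot):t\in\fT_d\}$; since $\Band$ is norm closed (as $\Hen_{B(\fF^2_d)}(\fA_d)$ is), we conclude $\phi\in\Band$.

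\emph{Not a left band when $d>1$.} By Theorem \ref{T:Henband}(iii) (combined with the already-established right band property together with Lemma \ref{L:analAd} for analyticity), $\Band$ being a left band would force $\Band=\Band^\dagger$. I plan to falsify this by producing an explicit $\phi\in\fA_d^\perp\subset\Band$ whose adjoint functional $\phi^\dagger$ fails to be Henkin, equivalently whose restriction $\phi|_{\fA_d^*}$ is not Henkin as a functional on the norm-closed subalgebra generated by $\{L_w^*:w\in\bF_d^+\}$.

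\emph{Strategy for the counterexample.} For $d\geq 2$ the Cuntz algebra $\O_d$ (the quotient $\fT_d/\fK$) is simple, infinite-dimensional, and admits $*$-representations $\pi_\sigma:\O_d\to B(\H_\sigma)$ in which the isometry $\pi_\sigma(l_1)$ has a fixed unit vector $v_0$; a convenient model is the inductive-limit infinite tensor product $\bigotimes_{n\geq 1}\bC^d$ with stabilizing sequence $\bigotimes e_1$. Form the $*$-representation $\pi:=L\oplus(\pi_\sigma\circ q)$ of $\fT_d$ on $\fF^2_d\oplus\H_\sigma$. Using Lemma \ref{L:riesz}(i) as a blueprint, I will choose $\xi\in\fF^2_d\oplus\H_\sigma$ mixing the shift and Cuntz parts (so that the Wold-type decomposition does not separate $\xi$ from $\H_\sigma$) and produce $\eta\perp\overline{\pi(\fA_d)\xi}$ whose $\H_\sigma$-component $\eta_c$ is aligned with $v_0$. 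The functional $\phi(t):=\langle\pi(t)\xi,\eta\rangle$ will annihilate $\fA_d$ by the orthogonality condition on $\eta$, yet $\phi(L_{1^n}^*)=\langle\xi,\pi(L_{1^n})\eta\rangle$ will carry the $\H_\sigma$-matrix coefficient $\langle\xi_c,\pi_\sigma(l_1)^nv_0\rangle=\langle\xi_c,v_0\rangle$, a nonzero constant.

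\emph{Main obstacle.} The delicate point is arranging the cancellations so that $\phi(L_w)=0$ holds for \emph{every} word $w$ while preserving nontrivial values $\phi(L_{1^n}^*)$. The isometry relations pull the shift and Cuntz components together, and a naive choice (e.g., $\xi=e_\varnothing\oplus 0$) forces $\pi(\fT_d)\xi\subset\fF^2_d\oplus 0$, trivializing all Cuntz-part matrix coefficients. The essential ingredient is therefore to pick $\xi$ with a carefully balanced component in $\H_\sigma$ (conveniently the GNS cyclic vector of a state of the form $\sigma_0+\omega_{e_\varnothing}$ for some Cuntz state $\sigma_0$ on $\O_d$), so that the asymmetry between the left-action relation $\pi(L_w)\xi\in\M$ and the right-action relation $\pi(L_w)\eta\not\subset\M$ becomes available; this asymmetry is precisely unavailable when $d=1$, where $\O_1\cong\rC(\bT)$ is commutative and the analogue of Theorem \ref{T:classHenkin} holds.
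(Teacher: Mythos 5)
Your argument for the right-band half is correct and is essentially the paper's own proof: decompose $\beta\in\Band$ as a weak-$*$ continuous part plus an element of $\fA_d^\perp$ via $\Hen_{B(\fF^2_d)}(\fA_d)=\E_{B(\fF^2_d)}(\fA_d)$, use Lemma \ref{L:riesz}(ii) on the annihilator part and separate weak-$*$ continuity of multiplication on the other, and finish with Theorem \ref{T:RN}(ii) plus norm-closedness of $\Band$. Your reduction of the left-band half is also sound in principle: since the triple is analytic and coanalytic (Lemma \ref{L:analAd}) and the right-band property is in hand, Theorem \ref{T:Henband}(iii) shows that $\Band$ being a left band would force $\Band=\Band^\dagger$, so it suffices to produce $\phi\in\fA_d^\perp$ with $\phi^\dagger\notin\Band$. (The paper's route to the same contradiction is more direct and avoids Theorem \ref{T:Henband}: it notes $|\tau^\dagger|\in\Band$ by Lemma \ref{L:riesz}(i) and $\tau^\dagger\ll|\tau^\dagger|$, so a left band would force $\tau^\dagger\in\Band$.)

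The genuine gap is that the required functional is never actually constructed. Everything hinges on exhibiting a single $\phi\in\fA_d^\perp$ whose adjoint is not Henkin, and your ``Strategy for the counterexample'' paragraph stops exactly at the decisive step: you must choose $\xi$ and $\eta$ in $\fF^2_d\oplus\H_\sigma$ so that $\langle\pi(L_w)\xi,\eta\rangle=0$ for \emph{every} word $w\in\bF_d^+$ (an infinite system of linear constraints coupling the shift and Cuntz components through the isometry relations) while simultaneously keeping $\langle\xi_c,v_0\rangle\neq 0$ so that $\phi^\dagger(L_{1^n})$ does not tend to $0$. You explicitly flag this as the ``main obstacle'' and offer only a heuristic for why the cancellations should be arrangeable; no verification is given that the two requirements are compatible. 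This is precisely the content the paper outsources to \cite{CMT2021}*{Example 2}, so as written the proposal establishes the right-band assertion but not the failure of the left-band property. To close the gap you would need either to carry out your construction in full (checking all the orthogonality relations $\eta\perp\pi(L_w)\xi$), or to invoke an existing example of an element of $\fA_d^\perp$ whose adjoint fails to be Henkin.
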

\begin{proof}
Fix $\psi\in \Band$ and $t\in \fT_d$. As observed before, we have that $\Hen_{B(\fF^2_d)}(\fA_d)=\E_{B(\fF^2_d)}(\fA_d)$, so there is a weak-$*$ continuous functional $\omega$ on $B(\fF^2_d)$ such that if we put $\theta=\omega|_{\fT_d}-\psi\in \fT_d^*$, then $\theta\in \fA_d^\perp$. Lemma \ref{L:riesz} implies that $\theta(t\cdot)\in \Band$. On the other hand, it is clear that $\omega(t\cdot )$ is another weak-$*$ continuous functional on $B(\fF^2_d)$, so that 
\[
\psi(t\cdot)=\omega(t\cdot)|_{\fT_d}-\theta(t\cdot)
\]
lies in $\Band$.

 Let now $\phi\in \fT^*_d$ such that $\phi^\dagger\ll\psi^\dagger$. Since $\Hen_{B(\fF^2_d)}(\fA_d)$ is norm closed, so is $\Band$, whence Theorem \ref{T:RN} and the previous paragraph imply that $\phi \in \Band$. We conclude that $\Band$ is indeed a right band.
 
Finally, it follows from \cite[Example 2]{CMT2021} that there is a functional $\tau\in \fA_d^\perp$ such that $\tau^\dagger\notin \Band$. But $|\tau^\dagger|\in \Band$ by Lemma \ref{L:riesz}, and clearly $\tau^\dagger\ll|\tau^\dagger|$, so that $\Band$ is not a left band.
\end{proof}

Next, we give a description of $\Band$. For this purpose, we let $\omega_0:\fT_d\to \bC$ denote the state defined as
\[
\omega_0(t)=\langle te_\varnothing,e_\varnothing\rangle, \quad t\in \fT_d.
\]
Let $\Ro$ denote the convex subset of states $\rho$ on $\fT_d$ such that $\rho|_{\fA_d}=\omega_0|_{\fA_d}$. We also let $\widehat{\Ro}$ denote the norm closure in $\fT_d^*$ of the set 
\[
\{\rho(b^* \cdot b):\rho\in \Ro, b\in \fA_d\}.
\]

\begin{theorem}\label{T:crAd}
We have that 
\[
\AC(\widehat\Ro)^\dagger=\Band=\AC(\Delta_*)^\dagger\subset\AC(\Delta)
\]
where the inclusion is strict.
\end{theorem}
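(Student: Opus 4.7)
The plan is to assemble the displayed chain from the abstract machinery of Sections \ref{S:GKS} and \ref{S:ncHenkin} combined with the explicit description of $\Band$ carried out for Popescu's algebra. For the middle equality, I will apply Theorem \ref{T:Henband}(ii): the triple $\fA_d\subset\fT_d\subset B(\fF^2_d)$ is coanalytic by Lemma \ref{L:analAd} and $\Band$ is a right band by Theorem \ref{T:HenAd}, so $\Band^\dagger=\AC(\Delta_*)$, which upon taking adjoints yields $\Band=\AC(\Delta_*)^\dagger$. The inclusion $\Band\subset \AC(\Delta)$ is immediate from Theorem \ref{T:nccr}(i), since the triple is also analytic by Lemma \ref{L:analAd}. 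For strictness, I will observe that $\AC(\Delta)$ is automatically a left band (if $\phi\ll\alpha$ and $\alpha\ll\delta$ with $\delta\in\Delta$, then $\fs_\phi\leq \fs_\alpha\leq \fs_\delta$, so $\phi\in\AC(\Delta)$), whereas $\Band$ fails to be a left band by Theorem \ref{T:HenAd}; hence the two sets cannot coincide.

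The substantive content is the identification $\AC(\widehat\Ro)^\dagger=\Band$, which I will prove in the equivalent form $\AC(\widehat\Ro)=\AC(\Delta_*)$. For the inclusion $\widehat\Ro\subset\AC(\Delta_*)$, it suffices to check that every functional of the form $\rho(b^*\cdot b)$ with $\rho\in\Ro$, $b\in\fA_d$ already lies in $\Band$, since such a functional is self-adjoint and $\Band^\dagger=\AC(\Delta_*)$. Given such $\rho$, I will apply the GNS construction to obtain $\pi:\fT_d\to B(\K)$ with cyclic vector $v$ representing $\rho$; because $\rho|_{\fA_d}=\omega_0|_{\fA_d}$, Popescu's dilation theory gives a unitary $W:\fF^2_d\to \overline{\pi(\fA_d)v}$ with $We_\varnothing=v$ and $Wa=\pi(a)W$ for $a\in\fA_d$. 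A direct computation then gives $\rho(b^*ab)=\langle a(be_\varnothing),be_\varnothing\rangle_{\fF^2_d}$ for every $a\in\fA_d$, exhibiting $\rho(b^*\cdot b)|_{\fA_d}$ as the restriction of a weak-$*$ continuous functional on $B(\fF^2_d)$, hence Henkin. Transitivity of absolute continuity then yields $\AC(\widehat\Ro)\subset\AC(\Delta_*)$.

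The reverse inclusion $\AC(\Delta_*)\subset \AC(\widehat\Ro)$ will be obtained by showing $\Delta_*\subset\widehat\Ro$. Given $\phi\in\fA_d^\perp$ with $\|\phi\|=1$, Lemma \ref{L:riesz}(i) furnishes a $*$-representation $\pi:\fT_d\to B(\H)$, a unit vector $\xi\in\H$, and a unitary $U:\fF^2_d\to\overline{\pi(\fA_d)\xi}$ with $\pi(a)U=Ua$ for $a\in\fA_d$, such that $|\phi^\dagger|(t)=\langle\pi(t)\xi,\xi\rangle$ for all $t\in\fT_d$. Set $\eta=U^*\xi\in\fF^2_d$, a unit vector which is cyclic for $\fA_d$ by construction, and choose $b_n\in\fA_d$ with $b_ne_\varnothing\to\eta$ (using that $e_\varnothing$ is cyclic for $\fA_d$). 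Next, I will define $\rho_0(t)=\langle\pi(t)(Ue_\varnothing),Ue_\varnothing\rangle$, which lies in $\Ro$ because on $\fA_d$ the intertwining $\pi(a)U=Ua$ forces $\rho_0(a)=\langle ae_\varnothing,e_\varnothing\rangle=\omega_0(a)$. Then $\rho_0(b_n^*\cdot b_n)$ is a vector functional with vector $\pi(b_n)(Ue_\varnothing)=Ub_ne_\varnothing$, which converges in norm to $U\eta=\xi$. A standard estimate shows $\|\rho_0(b_n^*\cdot b_n)-|\phi^\dagger|\|_{\fT_d^*}\to 0$, placing $|\phi^\dagger|$ in the norm-closed set $\widehat\Ro$, and by convexity/scaling the whole of $\Delta_*$ lies in $\widehat\Ro$.

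The main obstacle I expect is the third paragraph: identifying the vector $\eta=U^*\xi$ in $\fF^2_d$ produced by Lemma \ref{L:riesz} and then engineering an element of $\Ro$ whose twist by $b_n\in\fA_d$ approximates $|\phi^\dagger|$ in norm. The subtlety is that $\eta$ need not equal $e_\varnothing$, so $|\phi^\dagger|$ itself need not belong to $\Ro$; one must cleverly introduce the base state $\rho_0$ associated to $Ue_\varnothing$ and exploit the intertwining $\pi(a)U=Ua$ to compensate, turning the approximation $b_ne_\varnothing\to\eta$ into a \emph{norm} (not merely pointwise) approximation at the level of functionals. Once this step is secured, the rest of the argument is a formal combination of Theorems \ref{T:nccr}, \ref{T:Henband}, and \ref{T:HenAd}.
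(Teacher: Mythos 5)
Your proposal is correct and follows the same overall architecture as the paper: the middle equality and the strict inclusion are obtained exactly as in the paper from Theorems \ref{T:nccr} and \ref{T:Henband} together with Lemma \ref{L:analAd} and Theorem \ref{T:HenAd} (your direct observation that $\AC(\Delta)$ is automatically a left band is just an unwinding of Theorem \ref{T:Henband}(i)), and the inclusion $\Delta_*\subset\widehat\Ro$ is proved by the same device the paper uses, namely invoking Lemma \ref{L:riesz}(i) and approximating $\xi$ in norm by $\pi(b)Ue_\varnothing$. The one place where you genuinely diverge is the inclusion $\widehat\Ro\subset\Band$: the paper gets $\rho(b^*\cdot b)\in\Band$ formally, by noting $\rho(\cdot\, b)\in\Band$ via separate weak-$*$ continuity of multiplication and then applying Theorem \ref{T:RN} together with the right-band property from Theorem \ref{T:HenAd}, whereas you compute $\rho(b^*ab)=\langle a(be_\varnothing),be_\varnothing\rangle$ directly through a Fock-space model of the cyclic subspace $\ol{\pi(\fA_d)v}$. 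Your route is more concrete and even identifies the extension explicitly as a vector functional, but it rests on the claim that the GNS representation of an \emph{arbitrary} $\rho\in\Ro$ restricted to $\ol{\pi(\fA_d)v}$ is unitarily equivalent to the Fock representation with $v\mapsto e_\varnothing$; this is not a black-box consequence of ``Popescu's dilation theory'' applied to the hypothesis $\rho|_{\fA_d}=\omega_0|_{\fA_d}$ alone. It is true, but you should verify it: since $\rho$ is a state it is self-adjoint, and together with $\rho(L_u)=\delta_{u,\varnothing}$ this gives $\rho(L_{w'}^*L_w)=\delta_{w,w'}$ for all words $w,w'$ (the three cases being $w'$ a prefix of $w$, $w$ a proper prefix of $w'$, and incomparable words), which is exactly what is needed for $e_w\mapsto\pi(L_w)v$ to extend to the required unitary. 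With that computation supplied, your argument is complete and equivalent in strength to the paper's.
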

\begin{proof}
The facts that $\Band=\AC(\Delta_*)^\dagger\subset\AC(\Delta)$ and that the inclusion is strict follow from Theorems \ref{T:nccr} and \ref{T:Henband}, along with Lemma \ref{L:analAd} and Theorem \ref{T:HenAd}. 

It thus only remains to show that $\AC(\widehat\Ro)^\dagger=\Band$. For this purpose, let $\phi\in \fA_d^\perp$ be non-zero. By Lemma \ref{L:riesz},  there is a  unital $*$-homomorphism $\pi:\fT_d\to B(\H)$ and a non-zero vector $\xi\in \H$ such that
\[
|\phi^\dagger|(t)=\langle \pi(t)\xi,\xi\rangle
\]
for every $t\in \fT_d$.  Furthermore, if we let $\M=\ol{\pi(\fA_d)\xi}$, then there is a unitary operator $U:\fF^2_d\to \M$ such that
\[
U a U^*=\pi(a)|_\M, \quad a\in \fA_d.
\]
We claim that $|\phi^\dagger|\in \AC(\mathscr{R}_0)$. Indeed, let $\eps>0$ and note that $Ue_\varnothing$ is necessarily cyclic for $\pi(\fA_d)|_\M$.
We may then choose $b\in \fA_d$ with the property that 
\[
\|\pi(b)Ue_\varnothing-\xi\|<\eps.
\]
Define $\rho\in \fT_d^*$ as
\[
\rho(t)=\langle \pi(t)Ue_\varnothing, Ue_\varnothing\rangle, \quad t\in \fT_d.
\]
It is readily verified that $\rho|_{\fA_d}=\omega_0|_{\fA_d}$ so that $\rho\in \Ro$, and that
\[
\|\rho(b^* \cdot b)-|\phi^\dagger|\|<(1+\|\xi\|)\eps.
\]
Since $\eps>0$ was chosen to be arbitrary, we infer that $|\phi^\dagger|\in \widehat \Ro$. We conclude that $\Delta_*\subset \widehat{\Ro}$. Thus, $\Band^\dagger=\AC(\Delta_*)\subset \AC(\widehat \Ro)$.

Conversely, let $\rho\in \Ro$ and $b\in \fA_d$. Put $\theta=\rho(\cdot b)$. Since $\fA_d$ is an algebra and multiplication is separately weak-$*$ continuous on $B(\fF^2_d)$, it is readily seen that $\theta\in \Band$. Put now $\sigma=\theta(b^*\cdot)$. It follows from Theorem \ref{T:RN} that $\sigma^\dagger\ll \theta^\dagger$. 
But $\Band$ is a right band by Theorem \ref{T:HenAd}, whence $\rho(b^*\cdot b)=\sigma\in \Band$. Since $\Band$ is norm closed, we infer that $\widehat\Ro\subset \Band$. Now, $\widehat\Ro$ consists of self-adjoint functionals, so that $\widehat\Ro\subset \Band^\dagger$. Since $\Band$ is a right band, we infer that $\AC(\widehat\Ro)\subset \Band^\dagger$, and the proof is complete.
\end{proof}

We recall here that given a state $\rho$ on some $\rC^*$-algebra $\fT$ and some element $t\in \fT$, typically the functional $\rho(t^*\cdot t)$ is not absolutely continuous with respect to $\rho$; see Example \ref{E:ACmodule}. However, this statement does hold true whenever $\fT$ is commutative by Lemma \ref{L:ACmodule}. Thus, in the commutative setting we find $\AC(\widehat\Ro)=\AC(\Ro)$, hence the previous result can be viewed as a partial analogue of Theorem \ref{T:crAB}.

%


\section{Non-commutative peaking and interpolation}\label{S:nullproj}

In this final section, we give another application of our abstract results from Section \ref{S:ncHenkin}. The setting is that of non-commutative peak sets, as introduced by Hay in \cite{hay2007} and further refined in a series of subsequent papers \cite{BHN2008},\cite{blecher2013},\cite{BR2011},\cite{BR2013}. Our main result in this direction will highlight a web of implications -- some already known, some new -- between numerous fine properties of closed projections. 
\begin{definition}
Let $\fT$ be a unital $\rC^*$-algebra and let $\A\subset \fT$ be a unital norm-closed subalgebra.  Let $q\in \fT^{**}$ be a closed projection. 
We say that $q$ is:
\begin{enumerate}[{\rm (a)}]

\item a \emph{null} projection if $|\phi|(q)=0$ for every $\phi\in \A^\perp$;

\item  a \emph{peak} projection if there is a contraction $a\in \A$ such that $aq=q$ and $\|ap\|<1$ for every closed projection $p\in \fT^{**}$ orthogonal to $q$;

\item  an  \emph{interpolation} projection if $q\A=q\fT$;

\item a \emph{vanishing locus} projection if there is $a\in \A$ such that $aq=0$ and $ap\neq 0$ for every non-zero closed projection $p\in \fT^{**}$ orthogonal to $q$.
\end{enumerate}
Let $\fW$ be a von Neumann algebra which contains a copy of $\fT$. We say that $q$ is a \emph{totally null} projection if $|\phi|(q)=0$ for every $\phi\in \fT^*$ such that $\phi|_\A\in \Hen_\fW(\A)$.
\end{definition}

%
%
%
%
%

\begin{theorem}\label{T:peakequiv}
Let $\fW$ be a von Neumann algebra, let $\fT\subset \fW$ be a unital $\rC^*$-subalgebra, and let $\A\subset \fT$ be a unital norm-closed subalgebra. Let $q\in \fT^{**}$ be a closed projection. Consider the following statements.
\begin{enumerate}[{\rm (i)}]

\item  $q$ is a totally null projection

\item $q$ is a null projection

\item   $q$ is an interpolation projection

\item $q\in \A^{\perp\perp}$

\item $q$ is a peak projection

\item $q$ is a vanishing locus projection
\end{enumerate}
Then, we have
\[
{\rm (i)}\Rightarrow {\rm (ii)}\Leftrightarrow  {\rm (iii)}\Rightarrow  {\rm (iv)}\Leftarrow  {\rm (v)}\Rightarrow  {\rm (vi)}.
\]
If the triple $\A\subset \fT\subset \fW$ is analytic,  then we have  ${\rm (ii)}\Rightarrow  {\rm (i)}$.
If $\A$ is separable, then we  have  ${\rm (iv)}\Rightarrow  {\rm (v)}$.
\end{theorem}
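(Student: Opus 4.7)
The plan is to treat the six listed implications in order of difficulty and then address the two conditional statements separately. I expect the main obstacle to lie in the equivalence (ii)$\Leftrightarrow$(iii), whose proof I intend to import from the non-commutative peak interpolation literature; the genuinely new ingredient is (ii)$\Rightarrow$(i) under analyticity, which reduces cleanly to Theorem \ref{T:nccr}.

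The direct implications can be handled by short arguments. The implication (i)$\Rightarrow$(ii) is immediate: every $\phi\in\A^\perp$ satisfies $\phi|_\A=0\in\Hen_\fW(\A)$ and thus belongs to $\Band$, so totally null forces null. For (ii)$\Rightarrow$(iv), I apply Lemma \ref{L:statevanish} with $\xi=q$: since $q^*q=q$, the hypothesis $|\phi|(q)=0$ for each $\phi\in\A^\perp$ gives $\widehat\phi(q\eta)=0$ for every $\eta\in\fT^{**}$, and taking $\eta=I$ yields $\widehat\phi(q)=0$, so $q\in\A^{\perp\perp}$. For (v)$\Rightarrow$(vi), if $a\in\A$ is a peak contraction for $q$, then $b=I-a\in\A$ satisfies $bq=0$; for any nonzero closed projection $p\perp q$ the strict inequality $\|ap\|<1$ forces $ap\neq p$, whence $bp=p-ap\neq 0$. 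Finally, (v)$\Rightarrow$(iv) is a well-known consequence of Hay's theorem \cite{hay2007}: iterating the contraction $(I+a)/2$ produces weak-$*$ cluster points in $\A^{**}=\A^{\perp\perp}$ which can be identified with $q$.

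The equivalence (ii)$\Leftrightarrow$(iii) is the non-commutative analogue of Bishop's classical interpolation/peak-set dichotomy, and I intend to invoke the detailed analysis carried out in \cite{BHN2008},\cite{blecher2013},\cite{BR2011},\cite{BR2013}. The direction (ii)$\Rightarrow$(iii) follows by using Lemma \ref{L:statevanish} to place $q\fT^{**}$ inside $\A^{\perp\perp}$; an open-mapping / Hahn--Banach argument in the bidual then upgrades this to the existence, for each $t\in\fT$, of an element $a\in\A$ with $qa=qt$. Conversely for (iii)$\Rightarrow$(ii), given $\phi\in\A^\perp$ with polar partial isometry $v$, surjectivity of $a\mapsto qa$ combined with Kaplansky density permits an approximation of $v^*$ by elements of $\A$ modulo the kernel of multiplication by $q$, forcing $|\phi|(q)=\widehat\phi(qv^*)=0$. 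Once this equivalence is in hand, (iii)$\Rightarrow$(iv) follows by composition with (ii)$\Rightarrow$(iv), so no separate argument is needed.

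The main new ingredient is (ii)$\Rightarrow$(i) under analyticity, which I will prove by direct appeal to Theorem \ref{T:nccr}. Assume the triple is analytic, so $\Band\subset\AC(\Delta)$. Each $\delta\in\Delta$ is a norm limit of convex combinations of functionals of the form $|\phi|$ with $\phi\in\A^\perp$ of unit norm; since (ii) asserts $|\phi|(q)=0$ for each such $\phi$, the vanishing is preserved under convex combinations and norm limits, so $\widehat\delta(q)=|\delta|(q)=0$ for every $\delta\in\Delta$ (using that a state coincides with its own absolute value). Now for any $\psi\in\Band\subset\AC(\Delta)$, pick $\delta\in\Delta$ with $\psi\ll\delta$. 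Since $q^*q=q$ we have $|\delta|(q^*q)=0$, and absolute continuity yields $|\psi|(q)=|\psi|(q^*q)=0$. Thus $q$ is totally null. The remaining conditional implication (iv)$\Rightarrow$(v) under $\A$ separable is again Hay's principal result from \cite{hay2007}, the peaking contraction being built as a convergent series of approximants arising from a countable dense subset; separability is used essentially at this last step.
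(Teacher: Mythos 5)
Your proposal is correct and follows essentially the same route as the paper: the easy implications by the same direct arguments (Lemma \ref{L:statevanish} for (ii)$\Rightarrow$(iv), the $b=I-a$ trick for (v)$\Rightarrow$(vi)), the equivalence (ii)$\Leftrightarrow$(iii) and the implications (v)$\Rightarrow$(iv), (iv)$\Rightarrow$(v) imported from the non-commutative peak-set literature, and the new implication (ii)$\Rightarrow$(i) reduced to Theorem \ref{T:nccr} exactly as the paper does. Two minor caveats: in (ii)$\Rightarrow$(iii) the Hahn--Banach argument only yields that $q\A$ is norm-dense in $q\fT$, and the upgrade to equality is not an open-mapping argument but Hay's result \cite{hay2007} that $q\A$ is norm-closed whenever $q\in\A^{\perp\perp}$ is a closed projection (so (ii)$\Rightarrow$(iv) must be established first); and the separable case of (iv)$\Rightarrow$(v) is due to Blecher--Neal \cite{BN2012} rather than to Hay's original paper.
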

\begin{proof}
(i) $\Rightarrow$ (ii):  This is trivial.

(ii) $\Rightarrow$ (iv):  If $\phi\in \A^\perp$, then $|\phi|(q)=0$ because $q$ is assumed to be null. But then Lemma \ref{L:statevanish} implies that $\widehat\phi(q)=0$, so that $q\in \A^{\perp\perp}$.

(ii) $\Rightarrow$ (iii): This is contained in the proof of \cite[Proposition 3.4]{hay2007}, but we reproduce the details here. Consider $q\fT$ as a subspace of $\fT^{**}$. Let $\phi:q\fT\to \bC$ be a bounded linear functional such that $q\A\subset \ker \phi$. Using that $(q\fT)^{**}$ is isometrically and weak-$*$ homeomorphically isomorphic to $(q\fT)^{\perp\perp}=q\fT^{**}$, we obtain a weak-$*$ continuous linear functional $\phi':q\fT^{**}\to \bC$ extending $\phi$. Define a weak-$*$ continuous linear functional $\psi:\fT^{**}\to \bC$ as
\[
\psi(\xi)=\phi'(q\xi ), \quad \xi\in \fT^{**}.
\]
Since $q\A\subset \ker \phi$, it follows that $\psi|_\fT\in \A^\perp$, whence $|\psi|(q)=0$ since $q$ is assumed to be null. Invoking Lemma \ref{L:statevanish}, we infer that $\psi(q\xi)=0$ for every $\xi\in \fT^{**}$, whence $\phi=0$. By the Hahn--Banach theorem, this implies that $q\A$ is norm dense in $q\fT$. On the other hand, because we showed in the previous paragraph that $q\in \A^{\perp\perp}$, it follows from \cite[Proposition 3.1]{hay2007} that $q\A$ is closed, so that $q\A=q\fT$.

 (iii) $\Rightarrow$ (ii):  Let $\phi\in \A^\perp$ and choose a partial isometry $v\in \fT^{**}$ such that $|\phi|=\widehat\phi(\cdot v)$. By assumption, we have $q\A=q\fT$. Taking weak-$*$ closures in $\fT^{**}$, we obtain $q\A^{\perp\perp}=q\fT^{**}$. Thus, there is $\xi\in \A^{\perp\perp}$ such that $q\xi=qv$. Note then that $\widehat\phi(q\xi)=0$ since $q\xi\in \A^{\perp\perp}$, hence
\[
|\phi|(q)=\widehat\phi(qv)=\widehat\phi(q\xi)=0.
\]
We conclude that $q$ is a null projection.

(v) $\Rightarrow$ (iv): This is an immediate consequence of \cite[Lemma 3.6 and Theorem 5.1]{hay2007}.

(v) $\Rightarrow$ (vi): Since $q$ is a peak projection, there is a contraction $a\in \A$ such that $aq=q$ and $\|ap\|<1$ for every closed projection $p\in \fT^{**}$ orthogonal to $q$. Let $b=I-a\in \A$. We see that $bq=0$ and if $p\in \fT^{**}$ is non-zero closed projection orthogonal to $q$, then
\[
\|bp\|=\|p-ap\|\geq 1-\|ap\|>0
\]
so that $bp\neq 0$. 

Assume now that the triple $\A\subset \fT\subset \fW$ is analytic and that (ii) holds. Thus, $|\phi|(q)=0$ for every $\phi\in \A^\perp$. Clearly, this implies that $\widehat\delta(q)=0$ for every $\delta$ in the norm closure $\Delta\subset \fT^*$ of the convex hull of $\{|\phi|:\phi\in \A^\perp,\|\phi\|=1\}$.  It follows from Lemma \ref{L:statevanish} that $|\phi|(q)=0$ for every $\phi\in \AC(\Delta)$.  But Theorem \ref{T:nccr} implies that $\AC(\Delta)$ contains all functionals whose restriction to $\A$ lies in $\Hen_\fW(\A)$, so indeed $q$ is totally null.

Finally, when $\A$ is separable the fact that (iv) implies (v) is proved in \cite[Theorem 3.4]{BN2012} (alternatively, see \cite[Corollary 3.3]{CTh2020fdim}).
\end{proof}

We remark here that while most of the implications in the previous theorem were already contained (at least implicitly) in the literature, the relationship between a closed projection being null and being totally null  is novel, and requires our main result Theorem \ref{T:nccr}.

Classically, for the triple $\AB\subset \rC(\bS_d)\subset L^\infty(\bS_d,\sigma)$, the closed projection $q$ in the previous theorem is the characteristic function of a closed subset $K\subset \bS_d$. In this case, conditions (i) through (vi) in Theorem \ref{T:peakequiv} are all equivalent \cite[Chapter 10]{rudin2008}. In fact, these conditions are further equivalent to $K$ being a so-called \emph{peak interpolation} set; something similar is true in general \cite[Theorems 3.4 and 5.10]{hay2007}. We will not discuss the corresponding notion of ``peak interpolation projection" here, but we mention that a beautiful non-commutative peak interpolation theory has emerged over the years that parallels the classical one in many ways; see  \cite{BHN2008},\cite{blecher2013},\cite{BR2011},\cite{BR2013},\cite{CD2016duality},\cite{DH2020} for details.

In general, it is not true that (iv) implies (v) unconditionally: even in the classical setting of a uniform algebra, some form of topological regularity is required (see \cite[Lemma 12.1 and Theorem 12.7]{gamelin1969}). We close the paper with an example showing that the implications (vi) $\Rightarrow$ (i), (vi) $\Rightarrow$ (iii) and  (vi) $\Rightarrow$ (v) generally fail as well, even in the separable setting (see \cite{CD2018ideals} for a related discussion). 

\begin{example}\label{E:zeroset}
Let $d\geq 2$ be an integer, and let $H^2_d$ be the Drury--Arveson space on $\bB_d$. In this example, we work with the triple $\A(H^2_d)\subset \fT(H^2_d)\subset B(H^2_d)$.

It follows from Lemma \ref{L:fz} that we may identify $\fT(H^2_d)^{**}$ with $ B(H^2_d)\oplus \rC(\bS_d)^{**}$. Under this identification, $f\in \A(H^2_d)$ corresponds to $M_f\oplus f$; this will be used implicitly below.

Let $K\subset \bS_d$ be a closed set which is a zero set for $\A(H^2_d)$, in the sense that there is $a\in A(H^2_d)$ with $a^{-1}(\{0\})=K$.  Let $\chi_K\in \rC(\bS_d)^{**}$ denote the characteristic function of $K$. It is easily verified that $q_K=0\oplus \chi_K$ is a closed projection in $\fT(H^2_d)^{**}$. We claim that it is a vanishing locus projection. First, it is clear $aq_K=0$ by choice of $a$ and $K$. Next, let $p\in \fT(H^2_d)^{**}$ be a non-zero closed projection orthogonal to $q_K$. If we write $p=p'\oplus p''$ with $p'\in B(H^2_d)$ and $p''\in \rC(\bS_d)^{**}$, then $p''$ is a closed projection relative to  $\rC(\bS_d)$. In particular, there is a closed subset $L\subset \bS_d$ such that $p''=\chi_L$. Using that $pq_K=0$, we find $K\cap L=\varnothing$. If $p''\neq 0$, then $L$ is non-empty and the function $a$ does not vanish on $L$, so $ap''\neq 0$ and $ap\neq 0$. On the other hand, if $p''=0$, then necessarily $p'\neq 0$. Recall now that $a\neq 0$, so the operator $M_a\in B(H^2_d)$ is injective and $M_a p'\neq 0$. We infer that $ap\neq 0$ in that case well, so indeed $q$ is a vanishing locus projection.

Assume further that $K$ is of the type constructed in \cite[Theorem 11.1]{DH2020}. Then,  Lemma \ref{L:absvalmeasure} implies that $q_K=0\oplus \chi_K$ is not totally null. Moreover, 
\[
q_K \A(H^2_d)=\{0\}\oplus \{a|_K:a\in \A(H^2_d)\} 
\]
and
\[
q_K \fT(H^2_d)= \{0\} \oplus C(K).
\]
Hence, we conclude from \cite[Theorem 12.2]{DH2020} that $q_K \A(H^2_d)$ is a proper subset of $q_K \fT(H^2)$, so $q_K$ is not an interpolation projection. Finally, let $b\in \A(H^2_d)$ be a contraction with $bq_K=q_K$. This implies that the function $b$ is identically equal to $1$ on $K$.  Another application  of \cite[Theorem 12.2]{DH2020} yields the existence of a point $\zeta\in \bS_d\setminus K$ such that $b(\zeta)=1$. Then, the projection $p=0\oplus \chi_{\zeta}\in \fT(H^2_d)^{**}$ is closed and orthogonal to $q_K$, and
$
\|bp\|=b(\zeta)=1.
$
We infer that $q_K$ is not a peak projection.
\qed
\end{example}


\bibliography{biblio_ncHenkin}
\bibliographystyle{plain}


\end{document}